\documentclass[a4paper,12pt]{article}

\usepackage[text={6.5in,9.5in},centering]{geometry}
\setlength{\topmargin}{-0.25in}

\usepackage{graphicx,url,subfig,mathtools}
\RequirePackage[T1]{fontenc}
\RequirePackage{amsthm,amsmath,amssymb,amscd}
\RequirePackage[numbers]{natbib}
\RequirePackage[colorlinks,citecolor=blue,urlcolor=blue]{hyperref}
\usepackage{enumerate}
\usepackage{datetime}

\usepackage[normalem]{ulem} 
\usepackage{soul} 
\makeatother
\numberwithin{equation}{section}
\allowdisplaybreaks


\theoremstyle{plain}
\newtheorem{theorem}{Theorem}[section]

\newtheorem{lemma}[theorem]{Lemma}
\newtheorem{proposition}[theorem]{Proposition}

\theoremstyle{definition}

\newtheorem{remark}[theorem]{Remark}


\newcommand{\R}{\mathbb{R}}

\def\cD{\mathcal{D}}
\def\cE{\mathcal{E}}
\def\cF{\mathcal{F}}
\def\cG{\mathcal{G}}

\def\cH{\mathcal{H}}
\def\cP{\mathcal{P}}

\def\bE{\mathbb{E}}

\def\bR{\mathbb{R}}

\def\bE{\mathbb E}

\def\bP{\mathbb P}
\def\bR{\mathbb R}

\def\cA{\mathcal A}
\def\cB{\mathcal B}
\def\cD{\mathcal D}
\def\cF{\mathcal F}
\def\cH{\mathcal H}

\newcommand{\les}{\lesssim}

\def\x{\textbf x}

\def\z{\textbf z}

\title{Gaussian fluctuations for the wave equation \\
under rough random perturbations}

\author{Raluca M. Balan\footnote{Corresponding author. University of Ottawa, Department of Mathematics and Statistics,
Ottawa, ON, K1N 6N5, Canada. E-mail address: rbalan@uottawa.ca.} \footnote{Research
supported by a grant from the Natural Sciences and Engineering
Research Council of Canada.}
\and
Jingyu Huang\footnote{University of Birmingham,  School of Mathematics, Birmingham, B15 2TT, United Kingdom. \break E-mail address: j.huang.4@bham.ac.uk.}
 \and
Xiong Wang\footnote{Johns Hopkins University, Department of Mathematics, Baltimore, MD, 21218, United States. \break E-mail address: xiong\_wang@jhu.edu.}
\and
Panqiu Xia\footnote{Auburn University, Department of Mathematics and Statistics, Auburn, AL, 36849, United States. \break E-mail address: pqxia@auburn.edu.}
\and
Wangjun Yuan\footnote{University of Luxembourg, Department of Mathematics,
L-4364 Esch-sur-Alzette, Luxembourg. \break E-mail address: ywangjun@connect.hku.hk.}
}

\begin{document}
\maketitle

\begin{abstract}
\noindent In this article, we consider the stochastic wave equation in spatial dimension $d=1$, with linear term $\sigma(u)=u$ multiplying the noise. This equation is driven by a Gaussian noise which is white in time and fractional in space with Hurst index $H \in (\frac{1}{4},\frac{1}{2})$. First, we prove that the solution is strictly stationary and ergodic in the spatial variable. Then, we show that with proper normalization and centering, the spatial average of the solution converges to the standard normal distribution, and we estimate the rate of this convergence in the total variation distance. We also prove the corresponding functional convergence result.
\end{abstract}

\medskip
\noindent {\bf Mathematics Subject Classifications (2020)}:
Primary 60H15; Secondary 60H07, 60G15, 60F05

\medskip
\noindent {\bf Keywords:} stochastic wave equation, rough noise, Malliavin calculus, Stein's method

\maketitle

\tableofcontents

\section{Introduction}

The study of stochastic partial differential equations (SPDEs) is an active research area in stochastic analysis which has been growing steadily in the last 40 years, using two approaches: the semigroup approach developed by Da Prato and Zabczyk in \cite{DZ92}, and the random field approach initiated in Walsh's lecture notes \cite{Walsh86}. The two approaches rely on different infinite-dimensional extensions of It\^o's martingale theory of stochastic integration, and lead to different concepts of solutions. A comparison between the two approaches can be found in \cite{dalang-quer11}. Classical equations which have been studied using the random field approach are: the stochastic heat equation (SHE), and the stochastic wave equation (SWE). When these equations are perturbed by a space-time Gaussian white noise, random field solutions exist only in spatial dimension $d=1$. A systematic study of SPDEs in higher dimensions was initiated by Dalang in the seminal article \cite{Dalang99}, by considering a spatially-homogeneous Gaussian noise with spatial covariance given by a  non-negative-definite function $\gamma:\bR^d \to [0,\infty]$. A typical example is the Riesz kernel $\gamma(x)=|x|^{-\beta}$ with $\beta \in (0,d)$. The case $\gamma=\delta_0$, where $\delta_0$ is the Dirac distribution at $0$, corresponds formally to the white noise in space. Subsequent investigations revealed that the solutions to these equations have many interesting properties, such as: intermittency \cite{FK09}, H\"older continuity \cite{dalang-sanz09}, strict positivity \cite{chen-kim}, dense blow-up \cite{CHKK}, to mention just a few. Recent investigations focus on relaxing the conditions on the coefficients of the equation, as for instance in \cite{DKZ,FKN,salins21}.

\medskip

In the 1990's, {\em fractional Brownian motion} (fBm) became a popular model for the noise in various problems in stochastic analysis. Recall that a fBm is a zero-mean Gaussian process $(B_x^{(H)})_{x\in \bR}$ with covariance
\[
\bE[B_x^{(H)} B_y^{(H)}]=\frac{1}{2}(|x|^{2H}+|y|^{2H}-|x-y|^{2H})
\eqqcolon R_H(x,y),
\]
where $H \in (0,1)$ is the Hurst index. If $H=1/2$, the fBm is a Brownian motion. The paths of fBm are H\"older continuous of order less than $H$, and hence, are smoother or rougher than the Brownian paths, depending on whether $H>1/2$ or $H<1/2$. In the ``regular'' case $H>1/2$, the covariance of fBm can be written as:
\begin{equation}
\label{cov-fbm}
R_H(x,y)=H(2H-1)\int_{0}^{x} \int_{0}^{y} |u-v|^{2H-2}du dv,
\end{equation}
using the Riesz kernel $\gamma(x)=H(2H-1)|x|^{2H-2}$, which is the second derivative $(|x|^{2H})''$ in the sense of distributions. In the ``rough'' case $H<1/2$, \eqref{cov-fbm} does not hold since $(|x|^{2H})''$ is not a function. In this case, it is useful to work with the spectral representation:
\begin{equation}
\label{spectral-cov}
R_H(x,y)=c_H \int_{\bR} \cF 1_{[0,x]}(\xi) \overline{\cF 1_{[0,y]}(\xi)}|\xi|^{1-2H}d\xi,
\end{equation}
where $c_H = \frac{\Gamma(2H+1)\sin(\pi H)}{2\pi}$ and $\cF \varphi (\xi)=\int_{\bR}e^{-i \xi x} \varphi(x)dx$ is the Fourier transform.

The fBm is not a semi-martingale, and It\^o calculus cannot be used. Two methods were proposed for developing stochastic analysis with respect to fBm, using either (i) Malliavin calculus, or (ii) pathwise integration (which exploits the H\"older continuity of the paths). Pioneer works in this direction are: \cite{AMN01,decreusefond-ustunel98,nualart-rascanu02,zahle98}. Method (i) is relevant for the present article, and can be explained briefly as follows: we endow the space $\cE$ of linear combinations of indicator functions of the form $1_{[0,x]}$ with the inner product $\langle 1_{[0,x]},1_{[0,y]}\rangle_{\cP_0}=R_H(x,y)$, so that the map $1_{[0,x]} \mapsto B_x^{(H)} \in L^2(\Omega)$ becomes an isometry; then, the closure $\cP_0$ of $\cE$ is the domain of the Wiener integral with respect to $B^{(H)}$. In \cite{jolis10}, Jolis proved that $\cP_0$ coincides with the fractional Sobolev space $W^{\frac{1}{2}-H,2}(\bR)$, and therefore it is a space of distributions if $H>1/2$, and a space of functions if $H<1/2$. An alternative representation in \cite{DPV12} for the norm $\|\cdot\|_{\cP_0}$ was obtained in the case $H<1/2$, namely:
\begin{equation}
\label{Gagliardo}
\|\varphi\|_{\cP_0}^2=C_H \int_{\bR^2}|\varphi(x)-\varphi(y)|^2 |x-y|^{2H-2}dxdy,
\end{equation}
where $C_H=\frac{H(1-2H)}{2}$. Relation \eqref{Gagliardo} is called the {\em Gagliardo representation} and is very useful for problems in stochastic analysis, and in particular for the present article.

\medskip

SPDEs with colored noise in time have been considered for the first time in \cite{nualart-ouknine04}. Since then, this area has been growing steadily. However, the basic question of existence of solutions to (SHE) or (SWE) with colored (or fractional) noise in time is still an open problem in the case when the noise is multiplied by a Lipschitz function $\sigma(u)$ of the solution. The only case when it is known that these equations have unique solutions is the linear case, $\sigma(u)=u$. This case, which is known in the literature as the {\em parabolic Anderson model} (PAM) for the heat equation, respectively the {\em hyperbolic Anderson model} (HAM) for the wave equation, is studied using tools from Malliavin calculus, since the solution has an explicit Wiener chaos expansion.
This method was initiated by Hu and Nualart in \cite{hu-nualart09}. Various properties of the solution have been developed in \cite{CDOT,HHNT,HNS11} for (PAM), respectively \cite{BNQZ22,BQS19} for (HAM), to name just a few of the recent references.

In the present article, we consider the (HAM) driven by a Gaussian noise $\dot{W}$ which is white in time and fractional in space with Hurst index $H \in (\frac{1}{4},\frac{1}{2})$:
\begin{align}
\label{HAM}
	\begin{dcases}
		\dfrac{\partial^2 u}{\partial t^2} (t,x)
		= \dfrac{\partial^2 u}{\partial x^2} (t,x) + u(t,x) \dot{W}(t,x), &
		(t,x)\in \R_+\times \R,\\
		u(0,x) = 1, \ \dfrac{\partial u}{\partial t} (0,x) = 0, &\forall x\in \R.
	\end{dcases}
\end{align}

Formally, $\dot{W}$ is a zero-mean Gaussian noise with covariance
\[
\bE[\dot{W}(t,x) \dot{W}(s,y)]=\delta_{0}(t-s) \gamma(x-y), \quad
\mbox{where} \quad \gamma(x)=(|x|^{2H-2})''.
\]
Rigorously, $W=\{W(\varphi);\varphi \in \cD\}$ is a zero-mean Gaussian process defined on a complete probability space $(\Omega,\cF, \bP)$ and indexed by the set $\cD$ of infinitely differentiable functions on $\bR_{+}\times \bR$, with compact support. The covariance of $W$ is inspired by \eqref{spectral-cov} (in which we replace $1_{[0,x]}$ and $1_{[0,y]}$ by smooth functions), and is given by:
\begin{align}
\label{def_inn}
\bE[W(\varphi)W(\psi)]
=c_{H}\int_0^{\infty}\int_{\bR} \cF \varphi(t,\cdot)(\xi) \overline{\cF \psi(t,\cdot)(\xi)}|\xi|^{1-2H}d\xi dt \eqqcolon \langle \varphi,\psi\rangle_{\cH}.
\end{align}

We denote by $\cH$ the Hilbert space defined as the completion of $\cD$ with respect to the inner product $\langle \cdot, \cdot \rangle_{\cH}$. Then the map $\cD \ni \varphi \mapsto W(\varphi) \in L^2(\Omega)$ becomes an isometry which can be extended to $\cH$.
The process $\{W(\varphi);\varphi \in \cH\}$ is an isonormal Gaussian process, as defined in Malliavin calculus (see \cite{Nua06}), and $\cH$ is isomorphic to $L^2(\bR_{+};\cP_0)$.
Indicator functions of the form $1_{[0,t]\times [0,x]}$ lie in $\cH$ and the process $\{W_t(x)=W(1_{[0,t] \times [0,x]})\}_{x\in \bR}$ has the same distribution as $\sqrt{t}B^{(H)}$, since its covariance matches \eqref{spectral-cov}.

\medskip

A predictable process $u= \{u(t,x); t\geq 0,x \in \bR\}$ is a (mild) {\bf solution} to equation \eqref{HAM} if it satisfies the following integral equation:
\begin{equation}
\label{def-sol}
u(t,x)=1+\int_0^t \int_{\bR}G_{t-s}(x-y)u(s,y)W(ds,dy),
\end{equation}
where $G_t$ is the fundamental solution to the deterministic wave equation on $\bR_{+} \times \bR$:
\begin{equation}
\label{def-G}
G_t(x) \coloneqq \dfrac{1}{2} 1_{\{ | x| < t\}}.
\end{equation}
The integral on the right-hand side of \eqref{def-sol} is an It\^o integral, which coincides with the Skorohod integral, as observed in \cite{BJQ17}.

The existence and weak intermittency of the solution to \eqref{HAM} was proved in \cite{BJQ15}, respectively \cite{BJQ17}. In \cite{LHW21}, the existence of solution was studied in a more general scenario, where $u$ is replaced by a Lipschitz function $\sigma(u)$.
The existence of the solution to (PAM) with the same noise $\dot{W}$ as above was obtained in \cite{HHL+18}, while the exact asymptotic behaviour of its moments was established in \cite{HLN17}. The existence and H\"older continuity of the solution to (PAM) with space-time fractional noise of indices $H_0>1/2$ in time and $H<1/2$ in space was obtained in \cite{HL19}, under the condition $H_0+H>3/4$. The same problem for (HAM) was studied in \cite{SSX20} under the assumption $H \in (\frac{1}{4},\frac{1}{2})$. The (SHE) with the same noise and a general Lipschitz function $\sigma(u)$ multiplying the noise was studied in \cite{HHL+17}, under the restriction $\sigma(0)=0$; this condition was later removed in \cite{HW21}.

In all these references, the noise is spatially-homogeneous, i.e. it is invariant under translations. This property is transmitted to the solution $u$ in the form of strict stationarity of the process $\{u(t,x)\}_{x\in \bR}$. Without considerable effort, it is possible to prove that this process is also ergodic. The spatial ergodicity of the solution to an SPDE was proved for the first time in \cite{CKN+09} for the SHE with spatially homogeneous Gaussian noise (white noise in time), and a Lipshitz function $\sigma(u)$ multiplying the noise.

In the recent years, there has been a lot of interest in examining the asymptotic behaviour of the spatial average:
\[
F_{R}(t)=\int_{-R}^R \big(u(t,x)-1\big)dx.
\]
Since $\{u(t,x)\}_{x\in \bR}$ is strictly stationary and ergodic, by Brirkoff and von Neumann mean ergodic theorem,
the following law of large numbers holds:
\[
\frac{F_R(t)}{R} \to 0 \quad \mbox{a.s. and in $L^2(\Omega)$}, \quad \mbox{as $R \to \infty$}.
\]

A natural question is to investigate if $F_R(t)$ satisfies also a central limit theorem. For this, a novel technique was initiated in \cite{HNV20}, which
combines Stein's method for normal approximation with tools from Malliavin calculus. This method was originally developed for (SHE) with space-time white noise, and has been rapidly extended to other models. The paramount result is the {\em Quantitative Central Limit Theorem} (QCLT), which gives an estimate for the total variation distance $d_{TV}$ between $F_R(t)/\sigma_R(t)$ and a standard normal random variable $Z$, as a quantifier for the speed of convergence in distribution, when $R \to \infty$.
We recall that the total variation distance between random variables $X$ and $Y$ is given by:
\[
d_{\rm TV}(X,Y)=\sup_{B \in \cB(\bR)}|\bP(X \in B)-\bP(Y\in B)|,
\]
where $\cB(\bR)$ stands for the collections of all Borel subsets of $\bR$. The QCLT is closely related to a study of the order of magnitude of $\sigma_R^2(t)$, and can be extended to functional convergence.

\medskip

The following table summarizes the most important contributions to date, related to the problem of QCLT for solutions to SPDEs with spatially-homogeneous Gaussian noise, which can be white in time, fractional in time with index $H_0>1/2$, or time-independent. In some of these references, the temporal covariance of the noise can be more general, given by a non-negative-definite function $\gamma_0:\bR \to [0,\infty]$. For the sake of conciseness, we present only the fractional noise in time, when $\gamma_0(t)=|t|^{2H_0-2}$ with $H_0 \in (\frac{1}{2},1)$.
In this table, $\gamma(x)$ denotes the spatial covariance of the noise, $\sigma_R^2=\sigma_R^2(1)$, and $d_{TV}= d_{TV}(F_R/\sigma_R,Z)$ where $F_R=F_R(1)$. The notation $a_R \sim b_R$ indicates that $a_R/b_R \to C$ when $R \to \infty$, and $a_R\les b_R$ means that $a_R \leq C b_R$, where $C>0$ is a constant.

\begin{center}
\begin{tabular}{|c||c|c|} \hline
Noise           & (SHE) & (SWE) \\ \hline \hline
White & {\em Regular in space} & {\em Regular in space} \\
in time & $\bullet$ \cite{HNV20}: $d=1,\gamma=\delta_0\hspace{17mm}$ & $\bullet$ \cite{DNZ}: $d=1,\gamma(x)=|x|^{2H-2},H \in [\frac{1}{2},1)$  \\
($H_0=\frac{1}{2}$) & $\sigma_R^2 \sim R, d_{TV} \les R^{-1/2}$ & $\sigma_R^2 \sim R^{2H}, d_{TV} \les R^{H-1}$ \\
$\sigma$ arb. & $\bullet$ \cite{HNVZ}: $\gamma(x)=|x|^{-\beta},\beta\in (0,d)$ & $\bullet$ \cite{BNZ}: $d=2,\gamma(x)=|x|^{-\beta},\beta \in (0,2)\hspace{5mm}$ \\
& $\sigma_R^2 \sim R^{2d-\beta}, d_{TV} \les R^{-\beta/2}$ & $\sigma_R^2 \sim R^{4-\beta}, d_{TV} \les R^{-\beta/2}$ \\
&  & $\bullet$ \cite{NZ21}: $d\leq 2, \gamma\in L^1(\bR^d)\hspace{25mm}$ \\
& & $\sigma_R^2 \sim R^d, d_{TV} \les R^{-d/2}$ \\
& {\em Rough in space} \cite{NXZ22} & {\em Rough in space}\\
&  $d=1,H \in (\frac{1}{4},\frac{1}{2})$  & $d=1,H \in (\frac{1}{4},\frac{1}{2})$ \\
&  $\sigma_R^2 \sim R,d_{TV} \les R^{-1/2}$ & {\bf Open Problem 1}  \\ \hline
Fractional  & {\em Regular in space \cite{NXZ22,NZ20-1},} & {\em Regular in space \cite{BNQZ22}}  \\
in time & $\bullet$  $\gamma \in L^1(\bR^d)\hspace{30mm}$ & $\bullet$  $d\leq 2, \gamma \in L^1(\bR^d)\hspace{35mm}$  \\
$H_0 \in (\frac{1}{2},1)$ & $\sigma_R^2 \sim R^d, d_{TV} \les R^{-d/2}$ & $\sigma_R^2 \sim R^d, d_{TV} \les R^{-d/2}$ \\
$\sigma(u)=u$ & $\bullet$ $\gamma(x)=|x|^{-\beta}, \beta \in (0,d) \hspace{9mm}$ & $\bullet$   $d\leq 2,\gamma(x)=|x|^{-\beta}, \beta \in (0,d) \hspace{13mm}$ \\
& $\sigma_R^2 \sim R^{2d-\beta}, d_{TV} \les R^{-\beta/2}$ &
$\sigma_R^2 \sim R^{2d-\beta}, d_{TV} \les R^{-\beta/2}$ \\
& {\em Rough in space} \cite{NSZ20,NXZ22} & {\em Rough in space}  \\
&  $d=1,H<\frac{1}{2},H_0+H>\frac{3}{4}$  & $d=1,H \in (\frac{1}{4},\frac{1}{2})$ \\
&  $\sigma_R^2 \sim R,d_{TV} \les R^{-1/2}$ & {\bf Open Problem 2} \\
\hline
Time- & {\em Regular in space} \cite{BY23-1}  & {\em Regular in space}  \cite{BY22} \\
independent & $\bullet$ $\gamma \in L^1(\bR^d) \hspace{30mm}$ & $\bullet$ $d\leq 2, \gamma\in L^1(\bR^d) \hspace{33mm}$ \\
($H_0=1$) & $\sigma_R^2 \sim R^d, d_{TV} \les R^{-d/2}$ & $\sigma_R^2 \sim R^d, d_{TV} \les R^{-d/2}$ \\
$\sigma(u)=u$ & $\bullet$ $\gamma(x)=|x|^{-\beta},\beta \in (0,d)\hspace{8mm}$ & $\bullet$ $d\leq 2,\gamma(x)=|x|^{-\beta},\beta \in (0,d)\hspace{12mm}$ \\
 & {\em Rough in space} \cite{BY23-1} & {\em Rough in space} \cite{BY23-2} \\
 & $d=1,H \in (\frac{1}{4},\frac{1}{2})$ & $d=1,H \in (\frac{1}{4},\frac{1}{2})$ \\
 & $\sigma_R^2 \sim R, d_{TV}\les R^{-1/2}$ & $\sigma_R^2 \sim R, d_{TV}\les R^{-1/2}$ \\ \hline
\end{tabular}
\end{center}

In the case of the white noise in time (regular in space), reference \cite{CKNP22} gives the CLT for a function $g(u(t,x))$ of the spatial average of the solution to (SHE), while references \cite{CKNP-delta,KNP} studied the QCLT problem for a normalized version of the solution to (PAM) with delta initial condition. In the case of the (SWE) with white noise in time, the spatial ergodicity of the solution was proved in \cite{NZ20-2} for dimensions $d\leq 3$, while \cite{Ebi22} proved the convergence in the Wasserstein distance for the spatial average of the solution, in dimension $d=3$.

In this paper, we study the Open Problem 1 mentioned above, in the case $\sigma(u)=u$. We first show that the solution is strictly stationary and ergodic in the space variable, then we prove that $\sigma_R^2 \sim R$. The major effort is dedicated to the proof of QCLT. For this, we use the same method as in \cite{NXZ22} for (PAM), which relies on a second-order Poincar\'e inequality (Proposition 2.4 of \cite{NXZ22}). Due to the Gagliardo representation \eqref{Gagliardo} of the norm $\|\cdot\|_{\cP_0}$, we encounter the problem of estimating the fourth moments of the increments of the Malliavin derivative $Du(t,x)$, and of the rectangular increments of the second Malliavin derivative $D^2u(t,x)$. In the case of (PAM), these estimates are obtained in \cite{NXZ22} using highly non-trivial methods, that cannot be applied for (HAM).

Our method is simpler than that of \cite{NXZ22}, and relies on the following key relation
\begin{equation}
\label{Du}
D_{r,z}u(t,x)=u(r,z) v^{(r,z)}(t,x),
\end{equation}
between $Du$ and the solution $v=v^{(r,z)}$ of the integral equation (of Volterra type):
\begin{equation}
\label{eq-v}
v(t,x)=G_{t-r}(x-z)+\int_r^t \int_{\bR}G_{t-s}(x-y)v(s,y) W(ds,dy), \quad t\geq r,x\in \bR.
\end{equation}
Relation \eqref{Du} is valid for the solution $u$ of any SPDE with white noise in time, constant initial condition, and linear function $\sigma(u)=u$ multiplying the noise. But it may not hold for equations with colored noise in time, or time-independent noise. (Nevertheless, in the case of (HAM) with time-independent noise, it is still possible to derive an indirect argument relating $Du$ and $v$ which leads to estimates for the moments of $Du$ and its increments; see \cite[Theorem 4.1]{BY23-2}.) Key properties of $v^{(r,z)}$, which lie at the core of our methods, are: \\
(i) the moments of $v^{(r,z)}(t,x)$ are uniformly bounded in $(r,z,t,x)$ (see \cite[Example B.2]{BY23-2}); \\
(ii) $v^{(r,z)}$ satisfies the following relation (see the proof of Lemma \ref{lmm_vG} below):
\begin{align}
\label{special-G}
v^{(r,z)}(t,x)=2G_{t-r}(x-z)v^{(r,z)}(t,x);
\end{align}
(iii) the increments of $v^{(r,z)}$ have translation-invariance properties (see Lemma \ref{lmm_trans-v} below). \\
Properties (i) and (ii) may not hold for other models, such as (PAM); see \cite[Remark B.3]{BY23-2}.

\medskip

We are now ready to state the main results of the present article.

\begin{theorem}[Spatial ergodicity]
\label{EGD}
For any $t>0$, the process $\{u(t,x)\}_{x\in \bR}$ is strictly stationary and ergodic.
\end{theorem}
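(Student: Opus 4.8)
The plan is to realize the spatial shift $x\mapsto x+z$ as a measure-preserving flow on $(\Omega,\cF,\bP)$ induced by the spatial translation of the noise, read off strict stationarity directly, and obtain ergodicity from a mixing property of $W$. \textbf{Setting up the flow.} For $z\in\bR$ let $\tau_z$ act on $\cD$ by $(\tau_z\varphi)(s,y)=\varphi(s,y-z)$. Since $\cF(\tau_z\varphi)(s,\cdot)(\xi)=e^{-iz\xi}\,\cF\varphi(s,\cdot)(\xi)$, the inner product \eqref{def_inn} gives $\langle\tau_z\varphi,\tau_z\psi\rangle_{\cH}=\langle\varphi,\psi\rangle_{\cH}$, so $\tau_z$ extends to a unitary operator on $\cH$. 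Because $\{W(\varphi)\}_{\varphi\in\cH}$ is isonormal, there is a $\bP$-preserving $\theta_z\colon\Omega\to\Omega$ with $W(\varphi)\circ\theta_z=W(\tau_z\varphi)$, and $\{\theta_z\}_{z\in\bR}$ is a group. Writing the Wiener chaos expansion $u(t,x)=1+\sum_{n\ge1}I_n\big(f_n(\cdot;t,x)\big)$, where $f_n(\cdot;t,x)$ is the symmetrization of the iterated kernel built from $G_{t-s_1}(x-y_1),G_{s_1-s_2}(y_1-y_2),\dots$, the fact that $G_t$ depends only on the spatial increment yields $f_n(\cdot;t,x+z)=\tau_z^{\otimes n}f_n(\cdot;t,x)$, and uniqueness of the expansion then gives $u(t,x+z)=u(t,x)\circ\theta_z$ a.s. In particular the finite-dimensional laws of $\{u(t,x)\}_{x\in\bR}$ are translation invariant, i.e.\ the process is strictly stationary.

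\textbf{Ergodicity.} I would first prove that $\langle\tau_z\varphi,\psi\rangle_{\cH}\to0$ as $|z|\to\infty$ for all $\varphi,\psi\in\cH$. For $\varphi,\psi\in\cD$ this is the Riemann--Lebesgue lemma: by \eqref{def_inn} and Fubini, $\langle\tau_z\varphi,\psi\rangle_{\cH}=\int_{\bR}e^{-iz\xi}h(\xi)\,d\xi$ with $h(\xi)=c_H\,|\xi|^{1-2H}\int_0^\infty\cF\varphi(s,\cdot)(\xi)\overline{\cF\psi(s,\cdot)(\xi)}\,ds$, and $h\in L^1(\bR)$ since $\cF\varphi(s,\cdot)$ and $\cF\psi(s,\cdot)$ decay rapidly in $\xi$, uniformly over the compact time supports, while $|\xi|^{1-2H}$ is locally integrable ($1-2H\in(0,\tfrac12)$); density of $\cD$ in $\cH$ extends the claim to all $\varphi,\psi\in\cH$. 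Next, for $F=\bE F+\sum_{p\ge1}I_p(f_p)$ and $G=\bE G+\sum_{q\ge1}I_q(g_q)$ in $L^2(\Omega,\sigma(W))$, the identity $I_q(g_q)\circ\theta_z=I_q(\tau_z^{\otimes q}g_q)$ together with orthogonality of the chaoses gives
\[
\bE\big[F\,(G\circ\theta_z)\big]=\bE F\,\bE G+\sum_{p\ge1}p!\,\langle f_p,\tau_z^{\otimes p}g_p\rangle_{\cH^{\otimes p}}.
\]
Each summand tends to $0$ (first for elementary tensors $f_p=\varphi^{\otimes p}$, $g_p=\psi^{\otimes p}$, where it equals $p!\,\langle\varphi,\tau_z\psi\rangle_{\cH}^p$, then in general by density and Cauchy--Schwarz), and $\sum_{p\ge1}p!\,\|f_p\|\,\|g_p\|\le\|F-\bE F\|_{L^2}\,\|G-\bE G\|_{L^2}<\infty$; hence dominated convergence yields $\bE[F\,(G\circ\theta_z)]\to\bE F\,\bE G$, i.e.\ $\{\theta_z\}$ is mixing. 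Finally, if $A\in\sigma(u(t,x):x\in\bR)$ is invariant under the spatial shift of the process, then $\theta_z^{-1}A=A$ for all $z$ by the identity $u(t,x+z)=u(t,x)\circ\theta_z$, so taking $F=G=\one_A$ gives $\bP(A)=\bP(A)^2$; thus $\bP(A)\in\{0,1\}$ and $\{u(t,x)\}_{x\in\bR}$ is ergodic.

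\textbf{Main obstacle.} No step is deep, and the scheme parallels the spatial-ergodicity arguments of \cite{CKN+09,NZ20-2,NXZ22}. The point deserving most care is the pathwise identity $u(t,x+z)=u(t,x)\circ\theta_z$: one must check that the chaos kernels of $u(t,\cdot)$ are genuine spatial translates of one another and that the identity holds $\bP$-a.s.\ simultaneously in $x$ (reduce to $x\in\mathbb{Q}$ and invoke continuity of $u(t,\cdot)$, or work throughout with the product $\sigma$-field generated by the finite-dimensional marginals). The Riemann--Lebesgue estimate and the passage from Gaussian mixing to ergodicity are then routine.
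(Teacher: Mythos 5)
Your proposal is correct, but it takes a genuinely different route from the paper. For stationarity you and the paper do essentially the same thing (your equivariance $f_n(\cdot,t,x+z)=\tau_z^{\otimes n}f_n(\cdot,t,x)$ is the chaos-level version of Lemma \ref{trans-u}). For ergodicity, however, the paper does not construct the translation flow at all: it invokes the Maruyama-type criterion of \cite{Mar49}, in the form of \cite[Lemma 7.2]{CKN+09} as stated in \cite[Lemma 4.2]{BZ23}, and verifies $R^{-2}\mathrm{Var}(U_R)\to 0$ for $U_R=\int_{-R}^R g\big(\sum_j b_j u(t,x+\zeta_j)\big)dx$, $g\in\{\cos,\sin\}$, via the Gaussian--Poincar\'e inequality; because the spatial correlation is a distribution, the $\cH$-norm takes the Gagliardo form and the variance bound requires the increment estimates for $Du$ of Proposition \ref{prop_incre}, i.e.\ machinery developed anyway for the QCLT. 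Your argument is softer and more self-contained: it uses only the spectral form \eqref{def_inn} of the covariance, the Riemann--Lebesgue lemma to get $\langle\tau_z\varphi,\psi\rangle_{\cH}\to0$, and the chaos decomposition to upgrade this to mixing of the flow, which then transfers to $\{u(t,x)\}_{x\in\bR}$ through the equivariance identity; as a bonus it yields mixing, which is strictly stronger than ergodicity. The one point you should phrase more carefully is the existence of a pointwise $\bP$-preserving map $\theta_z$ with $W(\varphi)\circ\theta_z=W(\tau_z\varphi)$: on an abstract complete probability space with $\cF$ possibly larger than $\sigma(W)$ such a point transformation need not exist, so either pass to the canonical space, restrict to $\sigma(W)$ (which suffices, since $u(t,x)$ is $\sigma(W)$-measurable), or work directly with the second-quantized operator $I_n(h)\mapsto I_n(\tau_z^{\otimes n}h)$ as a measure-algebra automorphism; your mixing computation and the invariant-set argument go through verbatim at that level, and your reduction of the a.s.\ identity to countably many $x$ handles the remaining null-set bookkeeping.
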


\begin{theorem}[Limiting covariance]
\label{limit-cov}
For any $t>0$ and $s>0$
\begin{equation}
\label{lim-cov}
\lim_{R \to \infty}\frac{\bE[F_R(t) F_R(s)]}{R}=K(t\wedge s),
\end{equation}
where
\begin{align*}
K(t)=4\pi \sum_{n\geq 2}C_H^n \int_{T_n(t)} \int_{\bR^{n-1}} & \prod_{j=1}^{n-1}
\frac{\sin^2((t_{j+1}-t_j)|\eta_j|)}{|\eta_j|^2} \\
& \times  \prod_{j=1}^{n-1}|\eta_j-\eta_{j-1}|^{1-2H} |\eta_{n-1}|^{1-2H} d \pmb{\eta}_{n-1} d \pmb{t}_n,
\end{align*}
with $\pmb{\eta}_{n-1} = (\eta_1,\dots, \eta_{n-1})$, $\pmb{t}_n=(t_1,\ldots,t_n)$ and by convention $\eta_0=0$. In particular, $\sigma_{R}^2(t)/R \to K(t)>0$ as $R \to \infty$.
\end{theorem}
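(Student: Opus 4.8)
The plan is to combine the Wiener chaos expansion of the solution with the spatial stationarity established in Theorem~\ref{EGD}. Since $\sigma(u)=u$ and the initial data are constant, Picard iteration for \eqref{def-sol} gives $u(t,x)=1+\sum_{n\ge 1}I_n(f_{t,x,n})$, where $I_n$ is the $n$-fold multiple Wiener integral with respect to $W$ and
\[
f_{t,x,n}(r_1,y_1,\dots,r_n,y_n)=G_{t-r_n}(x-y_n)\prod_{j=1}^{n-1}G_{r_{j+1}-r_j}(y_{j+1}-y_j)\,\one_{\{0<r_1<\cdots<r_n<t\}} .
\]
Interchanging the spatial integral with the chaos expansion gives $F_R(t)=\sum_{n\ge 1}I_n(g_{R,t,n})$ with $g_{R,t,n}=\int_{-R}^{R}f_{t,x,n}\,dx$, and by orthogonality of the Wiener chaoses (together with the fact that the kernels are supported on ordered time simplices),
\[
\bE[F_R(t)F_R(s)]=\sum_{n\ge 1}\langle g_{R,t,n},g_{R,s,n}\rangle_{\cH^{\otimes n}} .
\]

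Next I would reduce each summand to a one-dimensional integral. Spatial homogeneity of $W$ makes the $\cP_0^{\otimes n}$ inner product invariant under simultaneous translation of all spatial variables, so $\langle f_{t,x,n},f_{s,y,n}\rangle_{\cH^{\otimes n}}=\beta_{t,s,n}(x-y)$ for a function $\beta_{t,s,n}$, whence
\[
\langle g_{R,t,n},g_{R,s,n}\rangle_{\cH^{\otimes n}}=\int_{-2R}^{2R}\bigl(2R-|z|\bigr)\beta_{t,s,n}(z)\,dz .
\]
Provided $\beta_{t,s,n}\in L^1(\bR)$, dominated convergence then yields $R^{-1}\langle g_{R,t,n},g_{R,s,n}\rangle_{\cH^{\otimes n}}\to 2\int_{\bR}\beta_{t,s,n}(z)\,dz$. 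To evaluate this limit I would insert the spectral representation \eqref{def_inn} of the inner product: the integration in $z$ produces a Dirac mass $2\pi\delta_0$ in the total frequency $\xi_1+\cdots+\xi_n$, and the substitution $\eta_j=\xi_1+\cdots+\xi_j$ (so $\eta_0=0$, with the last frequency forced to $-\eta_{n-1}$, which is the source of the factor $|\eta_{n-1}|^{1-2H}$) turns the resulting expression into the integral over the time simplex $T_n(t\wedge s)$ and over $\eta_1,\dots,\eta_{n-1}\in\bR$ appearing in the $n$-th summand of $K(t\wedge s)$. In particular, for $n=1$ the Dirac mass lands on the factor $|\xi_1|^{1-2H}$, which vanishes at the origin, so the $n=1$ term contributes $0$; this is why the series for $K$ starts at $n=2$.

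Two ingredients require real work. The first is the integrability $\beta_{t,s,n}\in L^1(\bR)$ and the finiteness of the collapsed spectral integrals; both use $H\in(\tfrac14,\tfrac12)$ crucially. The decay of the densities $|\eta_j-\eta_{j-1}|^{1-2H}$ at infinity forces $H>\tfrac14$ — already for $n=2$ one needs $\int_{|\eta_1|>1}|\eta_1|^{-4H}\,d\eta_1<\infty$ — while $H<\tfrac12$ produces a cusp of $\widehat{\beta_{t,s,n}}$ at the origin, hence $|z|^{2H-2}$ decay of $\beta_{t,s,n}$, hence $\beta_{t,s,n}\in L^1(\bR)$. The second ingredient, which I expect to be the main obstacle, is the interchange of $\lim_{R\to\infty}$ with $\sum_{n\ge 1}$. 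For this I would prove a bound $R^{-1}\|g_{R,t,n}\|_{\cH^{\otimes n}}^2\le c_n$, uniform in $R$, with $\sum_n c_n<\infty$; since $R^{-1}\|g_{R,t,n}\|_{\cH^{\otimes n}}^2=\int_{-2R}^{2R}(2-|z|/R)\beta_{t,t,n}(z)\,dz\le 2\|\beta_{t,t,n}\|_{L^1(\bR)}$, it suffices to establish $\sum_n\|\beta_{t,t,n}\|_{L^1(\bR)}<\infty$, and then Cauchy--Schwarz bounds $R^{-1}|\langle g_{R,t,n},g_{R,s,n}\rangle_{\cH^{\otimes n}}|$ by $\|\beta_{t,t,n}\|_{L^1}+\|\beta_{s,s,n}\|_{L^1}$. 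Controlling $\|\beta_{t,t,n}\|_{L^1}$ is precisely where the method of this paper enters: expressing the $\cP_0$ norms through the Gagliardo representation \eqref{Gagliardo}, using \eqref{Du} to replace $Du(t,x)$ by $u(r,z)v^{(r,z)}(t,x)$ with $v^{(r,z)}$ solving \eqref{eq-v}, and invoking the factorization \eqref{special-G} together with the uniform moment bound and translation properties (i)--(iii) of $v^{(r,z)}$, one reduces the estimate to elementary convolution bounds and obtains $\|\beta_{t,t,n}\|_{L^1(\bR)}\le C_t^{\,n}/a_n$ with $\sum_n C_t^{\,n}/a_n<\infty$, the summability coming from the $t^n/n!$ volume of $T_n(t)$. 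Granting these, passing to the limit term by term gives $R^{-1}\bE[F_R(t)F_R(s)]\to\sum_{n\ge 2}2\int_{\bR}\beta_{t,s,n}(z)\,dz=K(t\wedge s)$, and the statement $\sigma_R^2(t)/R\to K(t)$ is the case $s=t$ (since $\bE[F_R(t)]=0$). Finally $K(t)>0$: every summand in the series for $K$ is the integral of a non-negative function, and the $n=2$ summand $4\pi C_H^2\int_{T_2(t)}\bigl(\int_{\bR}\sin^2((t_2-t_1)|\eta_1|)\,|\eta_1|^{-4H}\,d\eta_1\bigr)\,dt_1\,dt_2$ is finite (by $H\in(\tfrac14,\tfrac12)$) and strictly positive.
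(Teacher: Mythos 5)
Your skeleton (chaos expansion of $F_R$, reduction by stationarity to $\int_{-2R}^{2R}(2R-|z|)\beta_{t,s,n}(z)\,dz$, vanishing of the $n=1$ term, positivity via the $n=2$ term) parallels the paper, but the mechanism by which you take the limit $R\to\infty$ contains a genuine gap, and it is precisely the point the authors single out as something they could not do. Everything in your argument hinges on the claim $\beta_{t,s,n}\in L^1(\bR)$ (in the paper's notation, integrability of $\gamma_n(t,\cdot)$ on $\bR$): it is needed both for the term-by-term limit $R^{-1}\int_{-2R}^{2R}(2R-|z|)\beta_{t,s,n}(z)\,dz\to 2\int_{\bR}\beta_{t,s,n}$ and for your dominating bound $\sum_n\|\beta_{t,t,n}\|_{L^1}<\infty$. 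You do not prove it. The assertion that a cusp of $\widehat{\beta_{t,s,n}}$ at the origin yields ``$|z|^{2H-2}$ decay of $\beta_{t,s,n}$, hence $\beta_{t,s,n}\in L^1$'' is not a valid implication: local behaviour of the Fourier transform at a single point does not control pointwise decay of the function. Likewise, the suggested route to $\|\beta_{t,t,n}\|_{L^1}\le C_t^{\,n}/a_n$ via \eqref{Du}, \eqref{special-G} and the properties of $v^{(r,z)}$ is not substantiated --- those identities concern Malliavin derivatives of $u$, not the kernel covariances $\beta_{t,t,n}$. The paper states explicitly, in the first remark following its proof of Theorem \ref{limit-cov}, that it could not prove $\gamma_n(t,\cdot)\in L^1(\bR)$, and in the second remark that the closely related route through $2\int_{\bR}\mathrm{Cov}(u(t,x),u(t,0))\,dx$ is blocked because $\gamma_n(t,\cdot)$ is not nonnegative in this rough-noise setting. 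So the central analytic step of your proposal is exactly the open point, not a routine verification.

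The paper circumvents this by never leaving the finite window: it writes $\int_{-R}^{R}\int_{-R}^{R}e^{-i\eta_n(x-y)}\,dx\,dy=4\pi R\,\ell_R(\eta_n)$, where $\ell_R(\xi)=\sin^2(R|\xi|)/(\pi R|\xi|^2)$ is an approximation of the identity in the \emph{frequency} variable, so that the $n$-th summand becomes $4\pi C_H^n R\int_{T_n(t)}\bigl(\ell_R*g^{(n)}_{\pmb t_n}\bigr)(0)\,d\pmb t_n$ with $g^{(n)}_{\pmb t_n}$ the spectral density after integrating out all but the last frequency. What must then be checked is only that $g^{(n)}_{\pmb t_n}$ is continuous, locally bounded and integrable on $\bR$ --- which follows from \eqref{prod} and \eqref{sin-int} --- together with a separate estimate for $n=1$ and an explicit dominating sequence $h_n$ with $\sum_n C_H^n\int_{T_n(t)}h_n(\pmb t_n)\,d\pmb t_n<\infty$ to interchange $\lim_{R\to\infty}$ with $\sum_n$. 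If you want to salvage your write-up, replace your physical-space $L^1$ step by this frequency-space approximation-of-identity argument; your reduction to translation-invariant kernels, the identification of the limiting expression (including why $n=1$ drops out), and the strict positivity via the $n=2$ term (the paper's Step 2, using $H>\tfrac14$) can then stand essentially as written.
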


\begin{theorem}[QCLT]
\label{QCLT}
For any $t>0$,
\[
d_{\rm TV}\left( \frac{F_R(t)}{\sigma_R(t)},Z\right) \leq C_{t,H}^{(2)} R^{-1/2},
\]
where $C_{t,H}^{(2)}>0$ is a constant depending on $t$ and $H$.
\end{theorem}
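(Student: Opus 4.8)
The plan is to apply the Stein--Malliavin bound for normal approximation, which (after normalizing $F_R(t)$ by $\sigma_R(t)$) gives
\[
d_{\rm TV}\!\left(\frac{F_R(t)}{\sigma_R(t)},Z\right) \les \frac{1}{\sigma_R^2(t)}\sqrt{\operatorname{Var}\big(\langle DF_R(t), -DL^{-1}F_R(t)\rangle_{\cH}\big)},
\]
where $L$ is the Ornstein--Uhlenbeck generator. Since Theorem \ref{limit-cov} already gives $\sigma_R^2(t)\sim K(t)R$ with $K(t)>0$, it suffices to prove that the variance term is $O(R)$; this yields the rate $R/R^{3/2}=R^{-1/2}$. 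Equivalently, following \cite{NXZ22}, I would invoke the second-order Poincar\'e inequality (Proposition 2.4 of \cite{NXZ22}), which bounds the above variance by an expression of the form
\[
\int_{[0,t]^2\times\bR^2}\int_{[0,t]^2\times\bR^2} \big\|\,\|D_{\theta_1}DF_R\|_{\cH}\,\|D_{\theta_2}DF_R\|_{\cH}\,\big\|_{L^2(\Omega)}^{?}\cdots,
\]
more precisely in terms of the $L^4(\Omega)$-norms of $D_{\theta}F_R$ and of $D_{\theta_1}D_{\theta_2}F_R$ integrated against the kernels coming from the $\cH$-inner products. Writing $F_R(t)=\int_{-R}^R (u(t,x)-1)\,dx$ and using $D_{r,z}u(t,x)=u(r,z)v^{(r,z)}(t,x)$ from \eqref{Du}, as well as the analogous formula for $D^2 u$ obtained by differentiating \eqref{eq-v} once more, I reduce everything to moment estimates for $u$, for $v^{(r,z)}$, and for their spatial increments.

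The key structural simplifications are properties (i)--(iii) listed in the introduction. Property (i) — uniform-in-$(r,z,t,x)$ boundedness of all moments of $v^{(r,z)}(t,x)$ — lets me pull moment factors out of all the space-time integrals. Property (ii), the identity $v^{(r,z)}(t,x)=2G_{t-r}(x-z)v^{(r,z)}(t,x)$, forces the support constraint $|x-z|<t-r$ into every term and, crucially, produces the factor $G_{t-r}(x-z)$ that makes the $dz$-integrals over $\bR$ convergent after using the Gagliardo representation \eqref{Gagliardo}. Because of \eqref{Gagliardo}, the $\cH$-norms that appear are not ordinary $L^2$ integrals but double integrals of squared increments against $|z-z'|^{2H-2}$; hence I will need $L^4(\Omega)$-bounds on the spatial increments $v^{(r,z)}(t,x)-v^{(r,z')}(t,x)$ and on the rectangular increments of $D v$. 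Property (iii) (Lemma \ref{lmm_trans-v}) reduces the increment estimates to a translation-normalized form so that the bound on, say, $\bE|v^{(r,z)}(t,x)-v^{(r,z')}(t,x)|^4$ depends on $z,z'$ only through $|z-z'|$ and is of the right H\"older order to be integrable against $|z-z'|^{2H-2}$ near the diagonal. Assembling these, each of the $2R$-length spatial integrations in $F_R$ contributes one factor of $R$ while the remaining integrations (in time, in the increment variables, and in the ``second'' copy of space) are shown to be finite uniformly in $R$; matching the combinatorics of the second-order Poincar\'e bound then gives the variance term $\les R$.

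The main obstacle I anticipate is precisely the moment control of the increments demanded by the Gagliardo representation: one must show $\bE\big[|D_{r,z}u(t,x)-D_{r,z'}u(t,x)|^4\big]\les |z-z'|^{2\alpha}$ (and the analogous rectangular-increment bound for $D^2u$) with an exponent $\alpha>1-2H$ so that the resulting integrals against $|z-z'|^{2H-2}$ converge near the diagonal, uniformly over all other parameters and in particular uniformly in $R$. In the (PAM) case \cite{NXZ22} this required delicate chaos-expansion arguments; here the hope is that \eqref{Du} together with the Volterra equation \eqref{eq-v}, the explicit form $G_t(x)=\frac12 1_{\{|x|<t\}}$, and properties (i)--(iii) let me instead run a direct Gr\"onwall/Picard-type estimate on \eqref{eq-v} and its increment version, turning the problem into bounding iterated integrals of products of $\sin^2$-kernels and $|\cdot|^{1-2H}$ factors — the same kernels that already appear in $K(t)$ in Theorem \ref{limit-cov}, whose finiteness is established there. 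Verifying that these increment bounds hold with a uniform constant (and tracking the $t$-dependence to get $C^{(2)}_{t,H}$) is where the real work lies; the rest is bookkeeping of powers of $R$.
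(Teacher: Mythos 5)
Your overall strategy is the same as the paper's: the second-order Poincar\'e inequality of \cite[Proposition 2.4]{NXZ22}, combined with $\sigma_R^2(t)\sim K(t)R$, the factorizations $D_{r,z}u(t,x)=u(r,z)v^{(r,z)}(t,x)$ and $D^2_{(\theta,w),(r,z)}u(t,x)=u(\theta,w)v^{(\theta,w)}(r,z)v^{(r,z)}(t,x)$, and properties (i)--(iii) of $v^{(r,z)}$. However, the step you yourself identify as ``where the real work lies'' is mis-specified in a way that would make the plan fail. You propose to prove a \emph{pointwise} H\"older bound $\bE\big[|D_{r,z}u(t,x)-D_{r,z'}u(t,x)|^4\big]\les |z-z'|^{2\alpha}$ with $\alpha>1-2H$, uniformly in all other parameters (and a rectangular analogue for $D^2u$), via a Gr\"onwall/Picard argument on \eqref{eq-v}. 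No such uniform pointwise bound can hold: the zeroth chaos of $v^{(r,z)}(t,x)$ is $G_{t-r}(x-z)=\tfrac12 1_{\{|x-z|<t-r\}}$, so for $x$ near the light cone the increment $v^{(r,z)}(t,x)-v^{(r,z')}(t,x)$ is of order one no matter how small $|z-z'|$ is. The smallness is only available in an \emph{integrated} sense (the set of bad $x$ has measure $\les |z-z'|$, via $\|G_{s-r}(\cdot)-G_{s-r}(\cdot-h)\|_{L^1(\R)}\le |h|$), which is exactly why the paper works with the integrated increment estimates of Lemmas \ref{lem-v} and \ref{Lemma-difference of v} (proved through \eqref{special-G}, translation invariance, and the chaos/Fourier computation of Lemma \ref{Lemma-difference of u}), culminating in Proposition \ref{prop_incre}; your Gr\"onwall route toward uniform pointwise H\"older bounds is not a viable substitute.

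A second, smaller issue is the $R$-bookkeeping. The quantity controlled by the second-order Poincar\'e inequality involves \emph{four} integrals over $[-R,R]$ (two from $DF_R$, two from $D^2F_R$), and your statement that ``each of the $2R$-length spatial integrations contributes one factor of $R$'' would give $\cA\les R^4$ and hence no decay of $d_{\rm TV}$. The proof must extend three of these four integrals to all of $\R$ and bound them uniformly in $R$ (possible thanks to the support factor $G_{t-r}(x-z)$ in \eqref{vG} and the sup-over-shift formulations in Proposition \ref{prop_incre}), keeping exactly one $[-R,R]$ integral to produce the single factor $R$; in the paper this is arranged by the Cauchy--Schwarz step \eqref{ineq-Cauchy} separating $T_1,T_2,T_3$. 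Without the integrated increment estimates and this organization, the argument does not close.
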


\begin{theorem}[Functional CLT]
\label{FCLT}
The process $\{R^{-1/2} F_R(t); t\geq 0\}$ converges in distribution as $R \to \infty$, to a zero-mean Gaussian process $\{\cG(t);t\geq 0\}$ with covariance
\[
\bE[\cG(t)\cG(s)]=K(t\wedge s).
\]
\end{theorem}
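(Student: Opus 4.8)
The plan is to combine the QCLT machinery with a tightness argument in the space of continuous functions. First I would establish convergence of finite-dimensional distributions: fix $0 \le t_1 < \dots < t_m$ and, for any $a = (a_1,\dots,a_m) \in \bR^m$, consider the linear combination $S_R = R^{-1/2}\sum_{k=1}^m a_k F_R(t_k)$. By the multivariate version of the Stein--Malliavin method (the Peccati--Tudor criterion), it suffices to show that $\mathrm{Var}(S_R) \to \sum_{j,k} a_j a_k K(t_j \wedge t_k) =: \Sigma_a$ — which is exactly Theorem \ref{limit-cov} applied to each pair — together with the estimate that $\|D S_R\|_{\cH}^2$ concentrates around its mean, i.e. $\mathrm{Var}(\|D S_R\|_{\cH}^2) \to 0$. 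The latter is a byproduct of the same chaos/Malliavin estimates that drive the proof of Theorem \ref{QCLT}; since each $R^{-1/2}F_R(t_k)$ is controlled there, Cauchy--Schwarz and bilinearity upgrade the single-time bound to arbitrary linear combinations. Hence $S_R \Rightarrow N(0,\Sigma_a)$ for every $a$, which by the Cramér--Wold device gives $(R^{-1/2}F_R(t_1),\dots,R^{-1/2}F_R(t_m)) \Rightarrow (\cG(t_1),\dots,\cG(t_m))$ with the stated covariance.

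Second, I would prove tightness of $\{R^{-1/2}F_R(\cdot)\}_R$ in $C([0,T])$ for each fixed $T>0$ via the Kolmogorov--Chentsov criterion: it is enough to show there exist $p > 2$, a constant $C_{T,H,p}$, and an exponent $\kappa > 1$ such that
\[
\bE\big[|R^{-1/2}F_R(t) - R^{-1/2}F_R(s)|^p\big] \le C_{T,H,p}\, |t-s|^{\kappa}, \qquad 0 \le s \le t \le T,
\]
uniformly in $R$. Writing $F_R(t) - F_R(s) = \int_{-R}^R (u(t,x) - u(s,x))\,dx$ and using the Wiener chaos expansion of $u$, the difference $u(t,x)-u(s,x)$ decomposes chaos-by-chaos; the kernels involve $G_{t-\cdot} - G_{s-\cdot}$, whose $\cH$-norm increments one estimates through the Gagliardo representation \eqref{Gagliardo} and the explicit Fourier transform of $G$. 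Because $p > 2$ and the $p$-th moment of an element of a fixed finite chaos is equivalent to the $\frac{p}{2}$-th power of its variance (hypercontractivity), the moment bound reduces to a second-moment bound of the form $\bE[(F_R(t)-F_R(s))^2] \le C_{T,H}\, R\, |t-s|^{2\alpha}$ for some $\alpha > 0$, combined with the linear-in-$R$ growth already present in Theorem \ref{limit-cov}. The key relation \eqref{Du}, the uniform moment bound (i) for $v^{(r,z)}$, and the translation-invariance of the increments (iii) are exactly the tools needed to carry out these estimates in the rough regime $H \in (\frac14,\frac12)$.

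Finally, finite-dimensional convergence plus tightness yields weak convergence in $C([0,T])$ for every $T$, hence in $C([0,\infty))$ with the topology of uniform convergence on compacts, to a process with the prescribed Gaussian finite-dimensional laws; continuity of the limit $\cG$ follows from the same moment estimate via Kolmogorov's continuity theorem. I expect the main obstacle to be the temporal-increment second-moment estimate $\bE[(F_R(t)-F_R(s))^2] \lesssim R |t-s|^{2\alpha}$ with a genuinely positive Hölder exponent $\alpha$: one must track, term by term in the chaos expansion, how the singular spatial kernels $|\cdot|^{1-2H}$ interact with the time-increment of the wave propagator $G$, and show the resulting series in $n$ converges with the correct power of $|t-s|$. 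This is where the special structure \eqref{special-G} of $v^{(r,z)}$ and careful Fourier-side bounds on $\frac{\sin^2((t_{j+1}-t_j)|\eta_j|)}{|\eta_j|^2}$ — together with its increments in the time variables — will be essential, and it is the step that mirrors, but must go beyond, the single-time covariance computation of Theorem \ref{limit-cov}.
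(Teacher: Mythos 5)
Your two-step skeleton (finite-dimensional convergence via the Malliavin--Stein method reusing the QCLT estimates, plus Kolmogorov--Chentsov tightness) coincides with the paper's, but both steps have concrete gaps as written. For the finite-dimensional distributions, the criterion you invoke does not apply in the stated form: the Peccati--Tudor theorem is for vectors of multiple Wiener--It\^o integrals of \emph{fixed} orders, whereas $F_R(t)$ has an infinite chaos expansion, and for such functionals concentration of $\|DS_R\|_{\cH}^2$ around its mean is not the operative sufficient condition; the quantity that controls the distance to Gaussianity is $\langle DS_R,-DL^{-1}S_R\rangle_{\cH}$. The paper, following \cite{BNQZ22}, reduces the finite-dimensional convergence to the pairwise bound $\mathrm{Var}\big(\langle DF_R(t_i),-DL^{-1}F_R(t_j)\rangle_{\cH}\big)\leq CR$, which it obtains from a bivariate version of the second-order Poincar\'e inequality of \cite{NXZ22} by the same estimate as the quantity $\cA$ in the proof of Theorem \ref{QCLT}. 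Your plan is repairable by replacing $\|DS_R\|_{\cH}^2$ with this bracket (bilinearity then makes the Cram\'er--Wold reduction equivalent to the pairwise bounds), but as stated this step would not go through.

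For tightness, you correctly identify the target moment bound, but the crucial temporal-increment estimate is only flagged as ``the main obstacle'' and never carried out, and this is precisely where the substance of the paper's proof lies. Moreover, the chaos-by-chaos route you sketch needs more than the asserted ``equivalence with the second moment'': hypercontractivity applies chaos by chaos, so one must establish factorial decay in $n$ of the chaos-wise second moments in order to sum the series with the factors $(p-1)^{n/2}$, as in Lemma \ref{Lemma-difference of u}. The paper avoids the chaos expansion of the time increment altogether: by the mild formulation and the stochastic Fubini theorem, $F_R(t)-F_R(s)=\int_0^t\int_{\bR}\big(\int_{-R}^R(G_{t-r}(x-y)-G_{s-r}(x-y))dx\big)u(r,y)W(dr,dy)$, and the Burkholder--Davis--Gundy inequality of \cite{BJQ15}, written in the Gagliardo form, reduces everything to Lemmas \ref{Lemma-u} and \ref{Lemma-difference of u} together with elementary facts about the wave kernel, namely $\int_{-R}^R(G_{t-r}(x-y)-G_{s-r}(x-y))dx\in[0,t-s]$ and $|G_t(x)-G_t(y)|=2|G_t(x)-G_t(y)|^2$; this yields $\|F_R(t)-F_R(s)\|_p\leq CR^{1/2}(t-s)^{1/2}$, which is exactly the bound needed for Kolmogorov--Chentsov once $p>2$. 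So while your overall architecture is sound, the proposal leaves unproved the estimate that constitutes the actual proof of tightness, and the finite-dimensional step rests on a criterion that does not apply to these functionals without the $-DL^{-1}$ correction.
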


Theorem \ref{EGD} is derived from an ergodicity criterion credited to Maruyama \cite{Mar49}. By combining this criterion with the Gaussian-Poincar\'e inequality (see e.g., \cite{HP95, Nas58}), the spatial ergodicity of the solution to (SHE) was investigated in \cite{CKN+09}. For the solution to (SWE), the spatial ergodicity in dimensions $d\leq 3$ was proved in \cite{NZ20-2}. However, in the paper \cite{NZ20-2}, the spatial correlation function has to be a function (or a measure), rendering the result inapplicable in our case, when the correlation is a distribution. Consequently, new methods must be employed to establish the validity of Theorem \ref{EGD}.

The proof of Theorem \ref{QCLT} is based on the second-order Gaussian-Poincar\'{e} inequality (referenced as in e.g., \cite{Cha09,NPR09,Vid20}) following the strategy initially discovered in \cite{BNQZ22}, and further applied/developed in \cite{BY22,BY23-1,BY23-2,NXZ22}. It is important to note that several estimates for (PAM) with rough noise in space obtained in \cite{NXZ22} are precise but not essential for achieving the desired goal. Those estimates unnecessarily increase complexity and cannot be applied to (HAM). By thoroughly analyzing the implementation of the second-order Gaussian-Poincar\'e inequality in the spatial average of SPDE's, certain unnecessary computations were successfully eliminated in \cite{BY23-2}. This reduction in complexity significantly streamlined the proof and marked the final stage of the progress made in this paper.

\medskip

This article is organized as follows. In Section \ref{sec_prelim}, we include the preliminary results, leading to the estimates for Malliavin derivatives of the solution, which are essential for our developments. Section \ref{section-main} contains the proofs of Theorems \ref{EGD},\ref{limit-cov},\ref{QCLT} and \ref{FCLT}.

Throughout this paper, we denote by $\|\cdot\|_p$ the $L^p(\Omega)$-norm of a random variable on the probability space $(\Omega, \mathcal{F}, \mathbb{P})$. For any positive integer $n$, we make use of the notation $\pmb{x}_n \coloneqq (x_1,\dots, x_n)$ for an element in $\mathbb{R}^n$, and for any $t \in \R$, $T_n(t)$ stands for a subset of $[0,t]^n$: $T_n(t) \coloneqq \{\pmb{t}_n = (t_1,\ldots,t_n);0<t_1<\ldots<t_n<t\}$.

\section{Preliminary estimates}
\label{sec_prelim}

In this section, we present an overview of (HAM) driven by noise $W$ as above, including relevant preliminaries. We reference existing literature for certain results, while also introducing a few novel estimates supported by detailed proofs.

The next two identities will be used, whose proof follows from elementary calculus, and thus omitted. Let $t > 0$, then the following two identities hold. For any $\alpha \in (-1,1)$,
\begin{equation}
\label{sin-int}
\int_{\bR}\frac{\sin^2(t|\xi|)}{|\xi|^2}|\xi|^{\alpha}d\xi=
C_{\alpha} t^{1-\alpha},
\end{equation}
where $C_{\alpha}>0$ is a constant depending on $\alpha$. For any $\alpha_1>-1,\ldots,\alpha_n>-1$,
\begin{equation}
\label{beta-int}
\int_{T_n(t)}\prod_{j=1}^n(t_{j+1}-t_j)^{\alpha_j}d\pmb{t}_n =
\frac{\prod_{j=1}^{n}\Gamma(\alpha_j+1)}
{\Gamma(|\alpha|+n+1)}t^{|\alpha|+n},
\end{equation}
where we use the convention $t_{n+1}=t$, and we denote $|\alpha|= \sum_{j=1}^n \alpha_j$.

\subsection{Properties of the solution}
\label{section-exist}

In this section, we include some properties of the solution that will be needed in the sequel, including the spatial strict stationarity mentioned in Theorem \ref{EGD}.

As discussed in \cite{SSX20}, the solution to equation \eqref{HAM} has the Wiener chaos expansion:
\[
u(t,x)=1+\sum_{n\geq 1}I_n(f_n(\cdot,t,x)),
\]
where $I_n$ is the multiple integral with respect to $W$ and the kernel $f_n(\cdot,t,x)$ is given by:
\begin{align*}
f_n(\pmb{t}_n,\pmb{x}_n,t,x) = & f_n(t_1,\dots, t_n,x_1,\dots, x_n,t,x) \\
 \coloneqq  & G_{t-t_n}(x-x_n)\times \dots \times G_{t_2-t_1}(x_2-x_1)\mathbf{1}_{T_n(t)}(\pmb{t}_n)
\end{align*}
In this case,
\[
\bE|u(t,x)|^2=\sum_{n\geq 1}n!\|\widetilde{f}_n(\cdot,t,x)\|_{\cH_0^{\otimes n}}^2,
\]
where $\widetilde{f}_n(\cdot, t,x)$ is the symmetrization of $f_n(\cdot, t,x)$:
\[
\widetilde{f}_n(\pmb{t}_n,\pmb{x}_n,t,x)=\frac{1}{n!}\sum_{\rho \in S_n}f_{n}(t_{\rho(1)},\ldots,t_{\rho(n)},x_{\rho(1)},\ldots,x_{\rho(n)},t,x),
\]
with $S_n$ denoting the set of all permutations on $\{1,\dots, n\}$.

The following result was proved in \cite{BJQ15,BJQ17}; see \cite[Theorem 3.9]{BJQ15} or \cite[Theorem 3.5]{BJQ17}.

\begin{lemma}
\label{Lemma-u}
For any $H \in (1/4,1/2)$, equation \eqref{HAM} has a unique solution. Moreover, for any $p\geq 2$ and $ t \geq 0$,
\begin{equation}
\label{u-bound}
	\sup_{(r,x) \in [0,t] \times \bR} \|u(r,x)\|_p < C,
\end{equation}
where $C>0$ is a constant that depends on $(t,p,H)$.
\end{lemma}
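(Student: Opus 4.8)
The plan is to run the standard Wiener-chaos / Picard argument adapted to the rough spatial noise, treating the chaos expansion $u(t,x)=1+\sum_{n\ge1}I_n(f_n(\cdot,t,x))$ as the candidate solution and showing the series converges in $L^2(\Omega)$ with the required uniform $L^p$ bounds. First I would record the iterated-integral form of the symmetrized kernel and use the isometry $\bE|u(t,x)|^2 = \sum_{n\ge1} n!\,\|\widetilde f_n(\cdot,t,x)\|_{\cH_0^{\otimes n}}^2$. The key point is to pass to the spectral (Fourier) side in the space variables: using the definition \eqref{def_inn} of $\langle\cdot,\cdot\rangle_\cH$ and the fact that $\cF G_t(\xi) = \sin(t|\xi|)/|\xi|$, one obtains, after symmetrization and the usual combinatorial bound $n!\,\|\widetilde f_n\|^2 \le \|f_n\|^2\cdot(\text{permutation count})$ absorbed into the time simplex, an expression of the form
\[
n!\,\|\widetilde f_n(\cdot,t,x)\|_{\cH_0^{\otimes n}}^2 \;\le\; c_H^n \int_{T_n(t)}\int_{\bR^n} \prod_{j=1}^n \frac{\sin^2\big((t_{j+1}-t_j)|\xi_j|\big)}{|\xi_j|^2}\,\prod_{j=1}^n |\xi_j - \xi_{j-1}|^{1-2H}\, d\pmb\xi_n\, d\pmb t_n,
\]
with $t_{n+1}=t$, $\xi_0=0$ (this is exactly the structure appearing in $K(t)$ in Theorem \ref{limit-cov}).

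The heart of the matter is the spatial integral $\int_{\bR^n}\prod_{j=1}^n \frac{\sin^2((t_{j+1}-t_j)|\xi_j|)}{|\xi_j|^2}\prod_{j=1}^n|\xi_j-\xi_{j-1}|^{1-2H}\,d\pmb\xi_n$. I would bound the coupling weights $|\xi_j-\xi_{j-1}|^{1-2H}$ by the elementary inequality $|\xi_j-\xi_{j-1}|^{1-2H}\le C(|\xi_j|^{1-2H}+|\xi_{j-1}|^{1-2H})$ (valid since $1-2H\in(0,1/2)$), expand the product, and reduce to a product of one-dimensional integrals of the type $\int_\bR \frac{\sin^2(a|\xi|)}{|\xi|^2}|\xi|^\alpha\,d\xi = C_\alpha a^{1-\alpha}$ from \eqref{sin-int}, where the exponents $\alpha$ lie in $\{0,\ 1-2H,\ 2(1-2H)\}\subset(-1,1)$ precisely because $H>1/4$. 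This produces a bound of the form $C^n \int_{T_n(t)}\prod_j (t_{j+1}-t_j)^{\beta_j}\,d\pmb t_n$ with each $\beta_j > -1$ (again using $H>1/4$ to keep $1-2(1-2H) = 4H-1 > 0$), and then \eqref{beta-int} gives a term bounded by $C^n t^{an+b}/\Gamma(cn+d)$ for suitable positive constants. The factorial growth of the Gamma function in the denominator dominates the exponential $C^n$, so $\sum_{n\ge1} n!\,\|\widetilde f_n(\cdot,t,x)\|_{\cH_0^{\otimes n}}^2 < \infty$ uniformly in $x$ and locally uniformly in $t$, establishing existence of the $L^2$ solution and the bound \eqref{u-bound} for $p=2$.

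For general $p\ge2$ I would invoke hypercontractivity on Wiener chaos: $\|I_n(\widetilde f_n)\|_p \le (p-1)^{n/2}\|I_n(\widetilde f_n)\|_2 = (p-1)^{n/2}(n!)^{1/2}\|\widetilde f_n\|_{\cH_0^{\otimes n}}$, so $\|u(t,x)-1\|_p \le \sum_{n\ge1}(p-1)^{n/2}(n!\,\|\widetilde f_n(\cdot,t,x)\|^2)^{1/2}$, and the same Gamma-function decay (the extra factor $(p-1)^{n/2}$ is harmless) yields the uniform bound with a constant depending on $t,p,H$. Uniqueness follows because the Picard iteration is linear: any two $L^2$ solutions have identical chaos kernels $f_n$ (read off recursively from \eqref{def-sol}), hence coincide; alternatively a Gronwall-type argument on $\sup_x \bE|u^{(1)}(t,x)-u^{(2)}(t,x)|^2$ closes it. The main obstacle — and the only place where the restriction $H\in(1/4,1/2)$ is used in an essential way — is controlling the spatial spectral integral: the naive splitting of $|\xi_j-\xi_{j-1}|^{1-2H}$ must be organized so that no one-dimensional factor ever sees an exponent $\le -1$ in \eqref{sin-int} or $\le -1$ in \eqref{beta-int}, which is exactly the arithmetic constraint $H>1/4$. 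Since this lemma is quoted from \cite{BJQ15,BJQ17}, I would in practice cite those references for the details and only sketch the chaos estimate above to keep the exposition self-contained.
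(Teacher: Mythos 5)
Your sketch is correct and is essentially the argument of the cited sources: the paper itself gives no proof of this lemma, quoting it from \cite{BJQ15,BJQ17}, and the chaos-expansion estimate you outline (Fourier side, subadditivity of $|\cdot|^{1-2H}$, reduction to \eqref{sin-int} and \eqref{beta-int}, Gamma-factorial decay, hypercontractivity for $p>2$) is exactly the mechanism used there and reproduced in the paper's own proof of Lemma \ref{Lemma-difference of u}. Nothing further is needed beyond, as you note, either citing those references or writing out the symmetrization and multi-index bookkeeping (the set $A_n$ with ${\rm card}(A_n)=2^{n-1}$) that you compress into ``permutation count absorbed into the time simplex.''
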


The following result gives the spatial stationarity of $u$.

\begin{lemma}
\label{trans-u}
For any $t>0$, the process $\{u(t,x)\}_{x \in \bR}$ is strictly stationary. In particular, for any $x,h \in \bR$,
\[
u(t,x)-u(t,x+h) \stackrel{d}{=} u(t,0)-u(t,h).
\]
\end{lemma}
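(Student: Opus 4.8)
The plan is to exploit the Wiener chaos expansion $u(t,x)=1+\sum_{n\ge 1} I_n(f_n(\cdot,t,x))$ together with the spatial translation invariance of the noise $W$. The key observation is that the covariance structure \eqref{def_inn}–\eqref{spectral-cov} is built from Fourier transforms in the spatial variable and the weight $|\xi|^{1-2H}$, both of which are unaffected by a spatial shift: translating a test function $\varphi(t,\cdot)$ by $h$ multiplies its Fourier transform by $e^{-i\xi h}$, and this phase factor cancels when paired with the conjugate transform of another shifted function. Hence the map $\tau_h:\varphi(t,y)\mapsto \varphi(t,y-h)$ extends to an isometry of $\cH$, and consequently there is a measure-preserving transformation of $(\Omega,\cF,\bP)$ — equivalently, $\{W(\tau_h\varphi);\varphi\in\cH\}\stackrel{d}{=}\{W(\varphi);\varphi\in\cH\}$ as isonormal Gaussian processes.

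The concrete steps I would carry out are: (1) record that, for fixed $t$, the shifted kernel satisfies $f_n(\pmb t_n,\pmb x_n,t,x+h)=f_n(\pmb t_n,\pmb x_n - h\pmb 1,t,x)$, which is immediate from the definition $f_n = G_{t-t_n}(x-x_n)\cdots G_{t_2-t_1}(x_2-x_1)\mathbf 1_{T_n(t)}(\pmb t_n)$ since each factor $G$ depends only on spatial differences; (2) invoke the isometry property of multiple Wiener integrals, $\bE[I_n(g)I_m(g')]=\delta_{nm}\,n!\,\langle \widetilde g,\widetilde g'\rangle_{\cH^{\otimes n}}$, so that all finite-dimensional distributions of $\{u(t,x)\}_x$ are determined by the inner products $\langle f_n(\cdot,t,x_i),f_m(\cdot,t,x_j)\rangle_{\cH^{\otimes\,\cdot}}$; (3) show these inner products are invariant under a common shift $x_i\mapsto x_i+h$, using step (1) and the translation invariance of $\langle\cdot,\cdot\rangle_{\cH^{\otimes n}}$ inherited from $\tau_h$ being a $\cH$-isometry (here the Gagliardo-type representation \eqref{Gagliardo}, lifted to tensor powers, makes the shift-invariance transparent since $|x-y|^{2H-2}$ depends only on $x-y$). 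Then $\{u(t,x+h)\}_{x\in\bR}$ and $\{u(t,x)\}_{x\in\bR}$ have the same finite-dimensional distributions, i.e. the process is strictly stationary; the displayed consequence follows by taking the two-point marginal at $x$ and $x+h$ versus $0$ and $h$.

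The main obstacle — really the only subtle point — is justifying rigorously that $\tau_h$ extends to an isometry of the abstract completion $\cH$ and that this extends to tensor powers $\cH^{\otimes n}$ in a way compatible with symmetrization, since $\cH$ is a space that may contain genuine distributions and one must be careful that the translation operator is well-defined on the completion and not merely on $\cD$. This is handled by checking $\langle\tau_h\varphi,\tau_h\psi\rangle_{\cH}=\langle\varphi,\psi\rangle_{\cH}$ directly on $\cD$ from \eqref{def_inn} via the phase-cancellation remark above, then extending by density; the tensor-power statement is then automatic. Once this is in place, convergence of the chaos series in $L^2(\Omega)$ (guaranteed by Lemma \ref{Lemma-u}) lets one pass the equality of finite-dimensional distributions to the full process, and one could alternatively phrase the whole argument at the level of the driving noise by noting that $u(t,x+h)$ solves the same equation \eqref{def-sol} driven by the shifted noise, which is equal in law to $W$.
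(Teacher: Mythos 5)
Your overall strategy (chaos expansion plus spatial translation invariance of $W$) is the same as the paper's, and your opening observation that $\tau_h:\varphi\mapsto\varphi(\cdot-h)$ is an isometry of $\cH$, so that the shifted noise has the same law as $W$, together with your closing remark about arguing at the level of the driving noise, is exactly how the paper proceeds: each coordinate $u(t,z_j+h)$ is written as a series of multiple integrals, the change of variables $y_i=x_i-h$ turns these into integrals of the \emph{unshifted} kernels against the shifted noise $W^{(h)}$, and $W^{(h)}\stackrel{d}{=}W$ gives equality in law of the whole vector.

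However, the ``concrete steps'' you say you would carry out contain a genuine flaw at steps (2)--(3): the finite-dimensional distributions of $\{u(t,x)\}_{x\in\bR}$ are \emph{not} determined by the inner products $\langle f_n(\cdot,t,x_i),f_n(\cdot,t,x_j)\rangle_{\cH^{\otimes n}}$. The isometry of multiple Wiener integrals only yields the covariance of the field, and $u(t,x)$ is not Gaussian: joint moments of multiple integrals involve, through the product formula, all partial contractions $f_n(\cdot,t,x_i)\otimes_r f_m(\cdot,t,x_j)$ with $0\le r<n\wedge m$, not just the full pairing, so shift-invariance of the inner products alone does not imply equality of finite-dimensional distributions. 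The correct way to close the argument is the one you state but do not execute: since $\tau_h$ is an $\cH$-isometry, the family $\{W(\tau_h\varphi)\}$ has the same law as $\{W(\varphi)\}$ as isonormal processes, hence multiple integrals of $\tau_h^{\otimes n}$-shifted kernels are \emph{jointly} (over all $n$ and all spatial points) equal in law to those of the original kernels; combined with your step (1), which identifies the kernels of $u(t,x+h)$ as the common-shift images of those of $u(t,x)$, this yields strict stationarity. Equivalently, change variables inside each multiple integral and invoke $W^{(h)}\stackrel{d}{=}W$, as in the paper. With that repair the proof is complete; as literally written, step (2) is an invalid reduction.
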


\begin{proof}
We show that for any $m \geq 1$ and for any $z_1,\ldots,z_m,h \in \bR$
\[
\big(u(t,z_1),\ldots, u(t,z_m)\big) \stackrel{d}{=}
\big(u(t,z_1+h),\ldots, u(t,z_m+h)\big).
\]
For any $j=1,\ldots,m$, the variable $u(t,z_j+h)$ has the chaos expansion:
\begin{align}
\label{diff-u}
u(t,z_j+h) = \sum_{n\geq 1}
\int_{T_n(t)\times \bR^n}& G_{t-t_n}(z_j+h-x_n)
 \prod_{i=1}^{n-1}G_{t_{i+1}-t_{i}}(x_{i+1}-x_{i})\nonumber\\
 & \times W(dt_1,dx_1)\ldots W(dt_n,dx_n),
\end{align}
where $T_n(t)=\{0<t_1<\ldots<t_n<t\}$.
Let $W^{(h)}=\{W^{(h)}(\varphi);\varphi \in \cD(\bR_{+} \times \bR)\}$ be the shifted noise, where
$W^{(h)}(\varphi)=W(\varphi(\cdot-h))$. Performing the formal change of variables $y_i=x_i-h$ for $i=1,\ldots,n$, we see that the multiple integral above is equal to
\begin{align*}
\int_{T_n(t) \times \bR^n} G_{t-t_n}(z_j-y_n) \prod_{i=1}^{n-1} G_{t_{i+1}-t_{i}}(y_{i+1}-y_{i}) W^{(h)}(dt_1,dy_1)\ldots W^{(h)}(dt_n,dy_n).
\end{align*}
Since $W$ is spatially homogeneous, $W^{(h)}\stackrel{d}{=}W$. Therefore, the vector of dimension $m$ whose $j$-th component is given by the series \eqref{diff-u} has the same distribution as the vector whose $j$-th component is $u(t,z_j)$.
\end{proof}

\begin{lemma}
\label{Lemma-difference of u}
For any $t>0$ and $p\ge 2$, we have
\begin{align*}
	\sup_{0\leq s \leq t}\int_{\bR} \big\|u(s,0)-u(s,h)\big\|_p^2 \, |h|^{2H-2} dh \le C,
\end{align*}
where $C>0$ is a constant that depends on $(t,H,p)$.
\end{lemma}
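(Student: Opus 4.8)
The plan is to expand $u(s,0)-u(s,h)$ into Wiener chaos, bound the $L^p(\Omega)$-norm of each chaos component by its $L^2(\Omega)$-norm through hypercontractivity, and then integrate the resulting series term by term against the measure $|h|^{2H-2}\,dh$, using the spectral form of the $\cH^{\otimes n}$-norm.

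First I would write $u(s,0)-u(s,h)=\sum_{n\ge1}I_n\big(g_n(\cdot,s,h)\big)$ with $g_n(\cdot,s,h):=f_n(\cdot,s,0)-f_n(\cdot,s,h)$. Since $g_n(\cdot,s,h)$ is supported on $T_n(s)$ in its time arguments, the permutations of $g_n$ have pairwise disjoint supports up to a null set, hence $n!\,\|\widetilde{g}_n(\cdot,s,h)\|_{\cH^{\otimes n}}^2=\|g_n(\cdot,s,h)\|_{\cH^{\otimes n}}^2$, and hypercontractivity of the Wiener chaos gives
\[
\|u(s,0)-u(s,h)\|_p\le\sum_{n\ge1}(p-1)^{n/2}\sqrt{n!}\,\|\widetilde{g}_n(\cdot,s,h)\|_{\cH^{\otimes n}}=\sum_{n\ge1}(p-1)^{n/2}\|g_n(\cdot,s,h)\|_{\cH^{\otimes n}}.
\]
By the triangle inequality in $L^2\big(\bR,|h|^{2H-2}dh\big)$, the lemma reduces to showing that, for some $M=M(t,H)$ and all $s\in[0,t]$,
\[
\mathcal{J}_n(s):=\int_{\bR}\|g_n(\cdot,s,h)\|_{\cH^{\otimes n}}^2\,|h|^{2H-2}\,dh\le\frac{M^n}{n!},
\]
since then $\sum_n(p-1)^{n/2}M^{n/2}/\sqrt{n!}<\infty$ bounds $\sup_{s\le t}\int_\bR\|u(s,0)-u(s,h)\|_p^2|h|^{2H-2}dh$.

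To evaluate $\mathcal{J}_n(s)$ I would use that, for fixed time variables, $f_n(\pmb{t}_n,\cdot,s,h)$ is the spatial translate of $f_n(\pmb{t}_n,\cdot,s,0)$ by $h$ in all $n$ variables, so its Fourier transform acquires the factor $e^{-\mathbf{i}h(\xi_1+\cdots+\xi_n)}$. Together with $\cF G_\tau(\eta)=\sin(\tau|\eta|)/|\eta|$ and the change of variables $\eta_i=\xi_1+\cdots+\xi_i$ (with $\eta_0=0$ and $t_{n+1}=s$), this yields
\[
\big|\cF g_n(\pmb{t}_n,\cdot,s,h)(\pmb{\xi}_n)\big|^2=\big|1-e^{-\mathbf{i}h\eta_n}\big|^2\prod_{i=1}^n\frac{\sin^2\!\big((t_{i+1}-t_i)|\eta_i|\big)}{|\eta_i|^2}\quad\text{on }T_n(s).
\]
Inserting this into $\|\varphi\|_{\cH^{\otimes n}}^2=c_H^n\int_{\bR_+^n}\!\int_{\bR^n}|\cF\varphi|^2\prod_j|\xi_j|^{1-2H}\,d\pmb{\xi}_n\,d\pmb{t}_n$ (with $|\xi_j|=|\eta_j-\eta_{j-1}|$) and carrying out the $h$-integral first via the scaling identity $\int_{\bR}|1-e^{-\mathbf{i}h\eta}|^2|h|^{2H-2}dh=\kappa_H|\eta|^{1-2H}$ (with $\kappa_H=\int_\bR 2(1-\cos u)|u|^{2H-2}du<\infty$ for $H\in(0,\tfrac12)$), I obtain
\begin{align*}
\mathcal{J}_n(s)&=\kappa_H c_H^n\int_{T_n(s)}\int_{\bR^n}|\eta_n|^{1-2H}\\
&\qquad\times\prod_{i=1}^n\frac{\sin^2\!\big((t_{i+1}-t_i)|\eta_i|\big)}{|\eta_i|^2}\prod_{j=1}^n|\eta_j-\eta_{j-1}|^{1-2H}\,d\pmb{\eta}_n\,d\pmb{t}_n.
\end{align*}

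The last step is to bound this integral by decoupling the frequencies. Since $1-2H\in(0,1)$, $|\eta_j-\eta_{j-1}|^{1-2H}\le|\eta_j|^{1-2H}+|\eta_{j-1}|^{1-2H}$, so expanding $|\eta_n|^{1-2H}\prod_{j}|\eta_j-\eta_{j-1}|^{1-2H}$ produces $2^{n-1}$ terms, each of the form $\prod_{i=1}^n|\eta_i|^{b_i(1-2H)}$ with $b_i\in\{0,1,2\}$. For each term the $\eta_i$-integrals separate, and \eqref{sin-int} applies with $\alpha=b_i(1-2H)\in[0,2(1-2H)]\subset(-1,1)$ --- this is exactly the point at which $H>\tfrac14$ is needed, to guarantee $2(1-2H)<1$. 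Feeding the surviving powers of $t_{i+1}-t_i$ into \eqref{beta-int} and using the crude bounds $\Gamma(2-b_i(1-2H))\le1$, $\Gamma\big(\sum_i(1-b_i(1-2H))+n+1\big)\ge n!$, and $s^{\sum_i(1-b_i(1-2H))+n}\le(1+t)^{2n}$ for $s\le t$ (the exponent lying between $n$ and $2n$), one gets $\mathcal{J}_n(s)\le M^n/n!$ uniformly in $s\in[0,t]$, as required. The main obstacle is really this bookkeeping around the coupled weights $|\eta_j-\eta_{j-1}|^{1-2H}$: the decoupling via subadditivity is harmless only as long as the inflated exponents $b_i(1-2H)$ stay below $1$, which is precisely the standing hypothesis $H>\tfrac14$. (The same computation with the factor $|1-e^{-\mathbf{i}h\eta_n}|^2$ and the $h$-integral removed yields $\sup_{s\le t,\,x}\|f_n(\cdot,s,x)\|_{\cH^{\otimes n}}^2\le C_t^n/n!$, which both justifies the $L^p$-convergence of the chaos expansion used above and reproves Lemma \ref{Lemma-u}.)
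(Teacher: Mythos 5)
Your proof is correct, and it reorganizes the paper's argument along a genuinely different route, even though the core toolkit is shared: chaos expansion plus hypercontractivity, the spectral form of the $\cH^{\otimes n}$-norm with the factor $|1-e^{-i h\eta_n}|^2$, the decoupling $|\eta_j-\eta_{j-1}|^{1-2H}\le |\eta_j|^{1-2H}+|\eta_{j-1}|^{1-2H}$, and the identities \eqref{sin-int} and \eqref{beta-int}. The difference lies in how the variable $h$ is handled. The paper first derives a pointwise increment bound $\|u(s,0)-u(s,h)\|_p\le C\big(|h|^{1/2}+|h|^{H}\big)$ (bounding $\sin^2((s-t_n)|\eta_n|)\le 1$ and converting the $\eta_n$-integral into powers of $|h|$), which forces a split of the $h$-integral into $|h|\le 1$, where $H>\tfrac14$ makes $|h|^{4H-2}$ integrable, and $|h|>1$, treated by the crude bound \eqref{u-bound}. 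You instead apply Minkowski's inequality in $L^2(\bR,|h|^{2H-2}dh)$ chaos by chaos and carry out the $h$-integration first (legitimate by Tonelli, all integrands being nonnegative), turning $|1-e^{-ih\eta_n}|^2|h|^{2H-2}$ into the extra frequency weight $\kappa_H|\eta_n|^{1-2H}$; the resulting frequency integral is then absorbed by \eqref{sin-int}, where $H>\tfrac14$ enters as $2(1-2H)<1$ --- the same role the hypothesis plays in the paper, merely transferred to frequency space. Your route dispenses with the $|h|\le 1$ versus $|h|>1$ split and only needs the $n!$ decay from \eqref{beta-int} (rather than the paper's sharper $(n!)^{2H+1}$), which suffices for the summability of $\sum_n (p-1)^{n/2}M^{n/2}(n!)^{-1/2}$; the trade-off is that the paper's argument yields, as a by-product, a pointwise spatial $L^p$-modulus of continuity of $u$, which your integrated bound does not provide (and which the lemma does not require). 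The auxiliary steps you invoke all check out: permutations of $g_n$ have a.e.\ disjoint temporal supports, so $n!\,\|\widetilde{g}_n\|_{\cH^{\otimes n}}^2=\|g_n\|_{\cH^{\otimes n}}^2$; the inflated exponents $b_i(1-2H)$ stay in $[0,2(1-2H)]\subset(-1,1)$; and the Gamma-function and $s$-exponent bookkeeping after \eqref{beta-int} is correct and uniform over $s\in[0,t]$.
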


\begin{proof}
Denote by $C$ a constant that depends on $(t,H,p)$ and may be different in each of its appearances.
Using the uniform bound for $\|u(s,x)\|_p$ given by \eqref{u-bound}, we have:
\[
\int_{|h|>1}\|u(s,0)-u(s,h)\|_p |h|^{2H-2}dh\leq C \int_{|h|>1}|h|^{2H-2}dh<\infty.
\]
It remains to treat the integral over the set $|h|\leq 1$.
By using the chaos expansion \eqref{diff-u} for $u(t,0) - u(t,x)$,
and hypercontractivity (see e.g., \cite[Corollary 2.8.14]{NP12}), for any $p\geq 2$,
\begin{equation}
\label{u-diff-b}
\|u(s,0)-u(s,h)\|_p \leq \sum_{n\geq 1}(p-1)^{n/2}\|I_n({f}_{n,h}(\cdot,0,s))\|_2=
\sum_{n\geq 1}(p-1)^{n/2}[J_{n,h}(s)]^{1/2}
\end{equation}
where $J_{n,h}(s) \coloneqq n! \|{f}_{n,h}(\cdot,0,s)\|_{\cH^{\otimes n}}^2$.
It follows from \eqref{def_inn} that
\begin{align*}
J_{n,h}(s)&=c_H^n \int_{T_n(s)} \int_{\bR^n} \big|\cF f_{n,h}(\pmb{t}_n,\cdot,s,x)(\xi_1,\dots,\xi_n)\big|^2 \prod_{j=1}^{n}|\xi_j|^{1-2H} d\pmb{\xi}_n d\pmb{t}_n \\
&=c_H^n \int_{T_n(t)} \int_{\bR^n} \big| 1-e^{-i(\xi_1+\dots+\xi_n)h} \big|^2 \prod_{j=1}^{n}\big|\cF G_{t_{j+1}-t_j}(\xi_1+\dots+\xi_j)\big|^2 \prod_{j=1}^{n}
|\xi_j|^{1-2H}d\pmb{\xi}_n d\pmb{t}_n \\
&=c_H^n \int_{T_n(t)} \int_{\bR^n} \big| 1-e^{-i\eta_n h} \big|^2 \prod_{j=1}^{n}\big|\cF G_{t_{j+1}-t_j}(\eta_j)\big|^2 \prod_{j=1}^{n}
|\eta_j-\eta_{j-1}|^{1-2H}d\pmb{\eta}_n d\pmb{t}_n,
\end{align*}
where by convention $t_{n+1}=s$ and $\eta_0=0$.
Taking account of the (in)equalities $|a+b|^{1-2H}\leq |a|^{1-2H}+|b|^{1-2H}$ for all $a, b \in \R$, and
\[
x_1 \prod_{j=2}^n (x_j+x_{j-1})=\sum_{\pmb{\alpha}_n \in A_n}x_j^{\alpha_j}, \quad \text{for all } x_1,\dots, x_n \in \R_+,
\]
where $A_n$ is a set of multi-indices $\pmb{\alpha}_n=(\alpha_1,\ldots,\alpha_n)$ such that $\alpha_1\in \{1,2\}$, $\alpha_n\in \{0,1\}$, $\alpha_2,\ldots,\alpha_{n-1} \in \{0,1,2\}$, $\sum_{j=1}^{n}\alpha_j=n$ and $\alpha_j+\alpha_{j-1}\in \{1,2,3\}$ for any $j=1,\ldots,n-1$. Then the cardinality ${\rm card}(A_n) = 2^{n-1}$ (see \cite[Page 7]{SSX20}). Hence
\begin{align}
\label{prod}
	\prod_{j=1}^{n}|\eta_j-\eta_{j-1}|^{1-2H}\leq \sum_{\pmb{\alpha}_n\in D_n}\prod_{j=1}^n |\eta_j|^{\alpha_j},
\end{align}
where $D_n$ is a subset of $A_n$ fulfilling $\alpha_{j} = (1-2H)\alpha_j$, $j=1,\dots,n$.
It follows that
\begin{align*}
 J_{n,h}(s) \leq c_H^n \sum_{\pmb{\alpha}_n \in D_n} \int_{T_n(t)}&
 \prod_{j=1}^{n-1}\left(\int_{\bR}|\cF G_{t_{j+1}-t_j}(\eta_j)|  |\eta_j|^{\alpha_j} d\eta_j \right)\\
 &\times \left(\int_{\bR} |1-e^{-i\eta_n h}|^2 |\cF G_{t-t_n}(\eta_n)|^2 |\eta_n|^{\alpha_n}d\eta_n \right) d\pmb{t}_n
 \end{align*}
As a result of identity \eqref{sin-int} and the fact that $|\eta_n|^{\alpha_n}\leq 1+|\eta_n|^{1-2H}$ (since $\alpha_n\in \{0,1-2H\}$),
\begin{align*}
J_{n,h}(s) & \leq C^n \sum_{\alpha \in A_n} \int_0^t
\left( \int_{\bR}|1-e^{-i\eta_n h}|^2  |\cF G_{t-t_n}(\eta_n)|^2 (1+|\eta_n|^{1-2H})d\eta_n\right) \\
& \qquad \qquad \qquad  \left(\int_{T_{n-1}(t_n)}
\prod_{j=1}^{n-1}(t_{j+1}-t_j)^{1-\alpha_j} d \pmb{t}_{n-1} \right)dt_n
\end{align*}
Due to inequality \eqref{beta-int}.
and the fact that $\Gamma(an+b+1) \geq c_a^n (n!)^a$ for any $a>0$ and $b\in \bR$ with some constant $c_a$ depending only on $a$, it can be proved that
\[
\int_{T_{n-1}(t_n)}
\prod_{j=1}^{n-1}(t_{j+1}-t_j)^{1-a_j} d \pmb{t}_{n-1} \leq \frac{C^n}{(n!)^{2H+1}} t_n^{(2H+1)n-2}.
\]
Therefore,
\begin{align*}
J_{n,h}(s) \leq &  \frac{C^n}{(n!)^{2H+1}} \int_0^t \int_{\bR}
|1-e^{-i\eta_n h}|^2  (1+|\eta_n|^{1-2H}) \frac{\sin^2((t-t_n)|\eta_n|)}{|\eta_n|^2} t_n^{(2H+1)n-2}d\eta_n dt_n\\
\leq & \frac{C^n}{(n!)^{2H+1}} s^{(2H+1)n-1}
\int_{\bR}
|1-e^{-i\eta_n h}|^2 (1+|\eta_n|^{1-2H}) \frac{1}{|\eta_n|^2} d\eta_n,
\end{align*}
where the last inequality follows from the bound
$\sin^2((s-t_n)|\eta_n|) \leq 1$.
Using the fact that $|1-e^{-ix}|^2 =2(1-\cos x)$ and the identity
\[
\int_{\bR}(1-\cos(\xi x)) |x|^{-\alpha-1}dx=C_{\alpha}|\xi|^{\alpha}, \quad \mbox{for any $\alpha \in (0,2)$},
\]
we obtain that
$J_{n,h}(s) \leq \frac{C^{n}}{(n!)^{2H+1}} \big(|h|+|h|^{2H}\big)$.
Plugging this inequality to \eqref{u-diff-b}, we get
\[
\|u(s,0)-u(s,h)\|_p \leq (|h|^{1/2}+|h|^H)\sum_{n\geq 1}(p-1)^{n/2}\frac{C^{n}}{(n!)^{H+1/2}} \eqqcolon C (|h|^{1/2}+|h|^{H}).
\]
This implies
\begin{align*}
\int_{|h|\leq 1}\|u(s,x)-u(s,x+h)\|_p^2 |h|^{2H-2}dh \leq C \int_{|h|\leq 1}  \big(|h|^{1/2}+|h|^{H}\big)^2 |h|^{2H-2}dh < \infty,
\end{align*}
because $H>1/4$. The proof of this lemma is complete.
\end{proof}

\subsection{(HAM) with delta initial velocity}
\label{section-delta}

In this section, we study equation \eqref{eq-v}.
We begin by recalling a basic fact which states that the solution to the deterministic wave equation:
\[
\frac{\partial^2 w}{\partial^2 t}(t,x)= \frac{\partial^2 w}{\partial^2 x}(t,x), \quad t>r,x\in \bR
\]
with initial condition $w(r,x)=u_0(x)$ and $\frac{\partial w}{\partial t}(r,x)=v_0(x)$ is given by:
\[
w(t,x)=(G_{t-r}*v_0)(x)+\frac{\partial}{\partial t}(G_{t-r}*u_0)(x).
\]
When $u_0=0$ and $v_0=\delta_z$ (for fixed $z \in \bR$), this becomes $w(t,x)=G_{t-r}(x-z)$. This observation motivates the definition below.

Consider the following model:
\begin{align}
\label{ham-delta}
	\begin{dcases}
		\dfrac{\partial^2 v}{\partial t^2} (t,x)
		= \dfrac{\partial^2 v}{\partial x^2} (t,x) + v(t,x) \dot{W}(t,x), \
		t>r, \ x \in \bR\\
		v(r,\cdot) = 0, \ \dfrac{\partial v}{\partial t} (r,\cdot) = \delta_z,
	\end{dcases}
\end{align}
We say that a random field $v^{(r,z)} = \{v^{(r,z)}(t,x) ; (t,x) \in [r,\infty) \times \R \}$ is a (mild) {\bf solution} of equation \eqref{ham-delta} if it satisfies the integral equation \eqref{eq-v}.

It can be proved that equation \eqref{ham-delta} has a unique solution (see e.g., \cite[Example B.2]{BY23-2}). Moreover, this solution has the chaos expansion:
\begin{equation}
\label{chaos-v}
v^{(r,z)}(t,x) = G_{t-r}(x-z) + \sum_{n\geq 1}I_n(g_n(\cdot,r,z,t,x)),
\end{equation}
where $g_0(r,z,t,x)=G_{t-r}(x-z)$ and for $n\geq 1$,
\begin{equation}
\label{def-gk}
	g_n(\pmb{t}_n,\pmb{x}_n,r,z,t,x)
	= G_{t-t_{n}}(x-x_{n}) \times \dots \times G_{t_1-r}(x_1-z).
\end{equation}
In addition, for any $p\geq 2$ and $t>0$,
\begin{equation}
\label{v-bounded}
\sup_{\substack{0\leq r \leq s \leq t \\ x,z\in \bR}} \big\|v^{(r,z)}(s,x)\big\|_p < C,
\end{equation}
where $C > 0$ is a constant depends on $(t,p,H)$.

\begin{lemma}\label{lmm_vG}
For any $0 \leq r \leq s \leq t < \infty$, $x,z \in \R$, and $p \geq 2$, we have:
\begin{equation}
\label{vG}
\big\|v^{(r,z)}(s,x)\big\|_p =2G_{s-r}(x-z)\big\|v^{(r,z)}(s,x)\big\|_p \leq C G_{s-r}(x-z).
\end{equation}
where $C>0$ is a constant that depends on $(t,p,H)$.
\end{lemma}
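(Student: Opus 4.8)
The plan is to derive \eqref{vG} from the pointwise identity \eqref{special-G}, whose verification is the only substantive step; the rest is bookkeeping. The structural fact driving everything is that $G_a(\cdot)=\tfrac12 1_{\{|\cdot|<a\}}$ is, up to the factor $2$, idempotent: $2G_{s-r}(x-z)\in\{0,1\}$, it equals $1$ precisely when $|x-z|<s-r$, and $2G_{s-r}(x-z)\,G_{s-r}(x-z)=G_{s-r}(x-z)$. So I would first show that, for every $n\geq 0$ and all $(\pmb{t}_n,\pmb{x}_n)$,
\[
2G_{s-r}(x-z)\,g_n(\pmb{t}_n,\pmb{x}_n,r,z,s,x)=g_n(\pmb{t}_n,\pmb{x}_n,r,z,s,x).
\]
If $|x-z|<s-r$ this is trivial since $2G_{s-r}(x-z)=1$ (and for $n=0$ it uses the idempotency above). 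If $|x-z|\geq s-r$, then any point in the support of $g_n$ would, by the triangle inequality along the telescoping chain $z\to x_1\to\cdots\to x_n\to x$, satisfy
\[
|x-z|\leq |x-x_n|+\sum_{j=2}^n|x_j-x_{j-1}|+|x_1-z|<(s-t_n)+\sum_{j=2}^n(t_j-t_{j-1})+(t_1-r)=s-r,
\]
a contradiction; hence $g_n\equiv 0$ and both sides vanish. (Here I also use that each factor $G_{t_j-t_{j-1}}(\cdot)$ in \eqref{def-gk} is identically zero unless $t_j>t_{j-1}$, so the ordering $r<t_1<\cdots<t_n<s$ holds automatically on the support, with the convention $t_0=r$, $t_{n+1}=s$; and for $n=0$ the kernel is $g_0=G_{s-r}(x-z)=0$.)

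Next I would multiply the chaos expansion \eqref{chaos-v} by the bounded deterministic scalar $2G_{s-r}(x-z)$; since multiplication by a bounded scalar is continuous on $L^2(\Omega)$ and $I_n$ is linear, this passes through the (convergent) series, giving
\[
2G_{s-r}(x-z)\,v^{(r,z)}(s,x)=v^{(r,z)}(s,x)\qquad\text{a.s.,}
\]
which is \eqref{special-G}. Taking $L^p(\Omega)$-norms and pulling the nonnegative constant $2G_{s-r}(x-z)$ out of $\|\cdot\|_p$ yields the equality in \eqref{vG}. Finally, the uniform moment bound \eqref{v-bounded} gives $\|v^{(r,z)}(s,x)\|_p\leq C$ for $0\leq r\leq s\leq t$, so $2G_{s-r}(x-z)\|v^{(r,z)}(s,x)\|_p\leq 2C\,G_{s-r}(x-z)$, and relabelling the constant completes the proof.

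I do not expect a genuine obstacle here: the lemma is a clean manifestation of finite speed of propagation for the wave equation, read off directly from the light-cone structure of the kernels $g_n$. The only points deserving a word of care are the commutation of the deterministic factor $2G_{s-r}(x-z)$ with the infinite chaos sum (immediate, as it is bounded by $1$), the degenerate case $|x-z|=s-r$ (both sides are $0$ since $G_{s-r}(x-z)=0$ and $g_n\equiv 0$), and the fact that one may argue with the unsymmetrized kernels $g_n$, because $I_n(g_n)=I_n(\widetilde g_n)$ and $g_n\equiv 0$ forces $\widetilde g_n\equiv 0$.
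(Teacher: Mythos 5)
Your proposal is correct and follows essentially the same route as the paper: the kernel-level identity $g_n=2G_{s-r}(x-z)\,g_n$ (which the paper attributes tersely to the special form \eqref{def-G} and you justify via the light-cone/triangle-inequality argument), passed through the chaos expansion \eqref{chaos-v} to get \eqref{special-G}, and then combined with the uniform bound \eqref{v-bounded}. The extra care you take with the $n=0$ term, the boundary case $|x-z|=s-r$, and the commutation of the deterministic factor with the series only fills in details the paper leaves implicit.
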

\begin{proof}
Taking account of the special form \eqref{def-G} of $G$, we see that $g_n(\cdot,r,z,t,x)=2G_{t-r}(x-z) g_n(\cdot,r,z,t,x)$. From this, we deduce using the chaos expansion \eqref{chaos-v} that \eqref{special-G} holds.
Thus, \eqref{vG} follows from \eqref{v-bounded} and \eqref{special-G}. The proof of this lemma is complete.
\end{proof}

The next lemma presents some translation-invariance properties of $v^{(r,z)}$.

\begin{lemma}\label{lmm_trans-v}
	For any $0 \leq r \leq t<\infty$ and $x,x',z,z' \in \bR$, the following properties hold:
	\begin{enumerate}[{\rm a)}]
		\item \label{trans-v0'}	$\displaystyle v^{(r,z)}(t,x)\stackrel{d}{=}v^{(r,0)}(t,x-z)$.

		\item \label{trans-v1'} $\displaystyle v^{(r,z)}(t,x)-v^{(r,z')}(t,x') \stackrel{d}{=}
	v^{(r,0)}(t,x-z)-v^{(r,z'-z)}(t,x'-z)$.

	\item \label{trans-v2} $\begin{aligned}[t]
& v^{(r,z)}(t,x)-v^{(r,z)}(t,x')-
v^{(r,z')}(t,x)+v^{(r,z')}(t,x')
\stackrel{d}{=}\\
& \quad \quad \quad
v^{(r,0)}(t,x-z)-v^{(r,0)}(t,x'-z)-
v^{(r,z'-z)}(t,x-z)+v^{(r,z'-z)}(t,x'-z).
\end{aligned}$	\end{enumerate}
\end{lemma}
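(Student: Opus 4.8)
The plan is to prove all three translation-invariance properties by the same device used in the proof of Lemma \ref{trans-u}: introduce the shifted noise and perform a change of variables in each Wiener chaos. Recall that $v^{(r,z)}$ has the explicit chaos expansion \eqref{chaos-v}--\eqref{def-gk}, in which the kernel $g_n(\pmb{t}_n,\pmb{x}_n,r,z,t,x)=G_{t-t_n}(x-x_n)\cdots G_{t_1-r}(x_1-z)$ depends on the spatial variables only through the differences $x-x_n$, $x_{j+1}-x_j$, and $x_1-z$. For a fixed $h\in\bR$, let $W^{(h)}$ be the shifted noise $W^{(h)}(\varphi)=W(\varphi(\cdot-h))$, which satisfies $W^{(h)}\stackrel{d}{=}W$ by spatial homogeneity. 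The change of variables $y_i=x_i-h$ in the $n$-th multiple integral turns $I_n(g_n(\cdot,r,z,t,x))$, computed against $W$, into the same multiple integral against $W^{(h)}$ but with $z$ replaced by $z-h$ and $x$ by $x-h$. Applying this simultaneously to every chaos (and to the zeroth-order term $G_{t-r}(x-z)$, which is already translation invariant in $(x,z)$) gives, for every $h$,
\[
v^{(r,z)}(t,x) \stackrel{d}{=} v^{(r,z-h)}(t,x-h)
\]
as processes in $(t,x,z)$ — more precisely, as a joint equality in distribution of the whole family indexed by finitely many quadruples, since the single shifted noise $W^{(h)}$ is used for all of them at once.

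Granting this joint statement, the three items follow by specialization. For a), take $h=z$. For b), apply the joint equality with $h=z$ to the pair $\bigl(v^{(r,z)}(t,x),v^{(r,z')}(t,x')\bigr)$: the first component becomes $v^{(r,0)}(t,x-z)$ and the second becomes $v^{(r,z'-z)}(t,x'-z)$, built from the common shifted noise $W^{(z)}\stackrel{d}{=}W$, hence the difference has the claimed law. For c), apply the same $h=z$ shift jointly to the four random variables $v^{(r,z)}(t,x)$, $v^{(r,z)}(t,x')$, $v^{(r,z')}(t,x)$, $v^{(r,z')}(t,x')$; each is transported to the corresponding term on the right-hand side, so the alternating sum has the stated distribution. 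The key point each time is that a \emph{single} shift $h=z$ is applied to all the variables appearing in the statement, so the joint law — not merely the marginals — is preserved.

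The main technical obstacle is making the change-of-variables argument rigorous at the level of the full chaos expansion rather than formally term by term. One must justify that the map on $L^2(\Omega)$ induced by $W\mapsto W^{(h)}$ sends $I_n(g_n(\cdot,r,z,t,x))$ to $I_n(g_n(\cdot,r,z-h,t,x-h))$ — this is the statement that relabelling the integration variables commutes with the multiple Wiener--It\^o integral, which is standard but should be invoked carefully (e.g.\ via the isometry between $\cH$ and $L^2(\bR_+;\cP_0)$ and the fact that $\|g_n(\cdot,r,z,t,x)\|_{\cH^{\otimes n}}$ is finite, as guaranteed by \eqref{v-bounded} together with the convergence of \eqref{chaos-v}) — and that the resulting series still converges in $L^2(\Omega)$ to $v^{(r,z-h)}(t,x-h)$, which again follows from \eqref{v-bounded}. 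Once this bookkeeping is in place, the proof is essentially identical in structure to that of Lemma \ref{trans-u}, and I would present it by first stating and proving the joint equality $v^{(r,z)}(t,x)\stackrel{d}{=}v^{(r,z-h)}(t,x-h)$ (for arbitrary finite collections of quadruples), then deducing a), b), c) as immediate corollaries by setting $h=z$.
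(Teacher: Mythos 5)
Your proposal is correct and follows essentially the same route as the paper: shift the noise by $W^{(z)}(\varphi)=W(\varphi(\cdot-z))$, change variables $y_i=x_i-z$ in each Wiener chaos, and use $W^{(z)}\stackrel{d}{=}W$ to transfer the joint law. The paper simply applies this shift directly to the chaos expansion of the difference (for b)) and of the alternating sum (for c)), which is the same mechanism as your ``single shift applied jointly'' formulation, just organized term-by-term rather than via a preliminary joint shift-invariance statement.
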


\begin{proof}
a) This is similar to the proof of Lemma \ref{trans-u}.

b) We have the following chaos expansion:
\begin{align}
\nonumber
& v^{(r,z)}(t,x)-v^{(r,z')}(t,x') = G_{t-r}(x-z)-G_{t-r}(x'-z')+\\
\nonumber
& \quad \quad \quad \sum_{n=1}^{\infty} \int_{r<t_1<\ldots<t_n<t} \int_{\bR^n}\Big( G_{t-t_n}(x-x_n) G_{t_1-r}(x_1-z) - G_{t-t_n}(x'-x_n) G_{t_1-r}(x_1-z')  \Big) \\
\label{diff-v}
& \qquad \qquad \qquad \qquad \qquad \prod_{i=1}^{n-1}G_{t_{i+1}-t_i}(x_{i+1}-x_i) W(dt_1,dx_1)\ldots W(dt_n,dx_n).
\end{align}
Performing the formal change of variables $y_i=x_i-z$ for $i=1,\ldots,n$, we see that the multiple integral above is equal to
\begin{align*}
& \int_{r<t_1<\ldots<t_n}\int_{\bR^n} \Big( G_{t-t_n}(x-z-y_n) G_{t_1-r}(y_1) - G_{t-t_n}(x'-z-y_n) G_{t_1-r}(y_1-z'+z)  \Big) \\
& \qquad \qquad \qquad \prod_{i=1}^{n-1}G_{t_{i+1}-t_i}(y_{i+1}-y_i) W^{(z)}(dt_1,dy_1)\ldots W^{(z)}(dt_n,dy_n).
\end{align*}
Since $W \stackrel{d}{=}W^{(z)}$, the series \eqref{diff-v} has the same distribution as the series which gives the chaos expansion of $v^{(r,0)}(t,x-z)-v^{(r,z'-z)}(t,x'-z)$.

c) The chaos expansion of the term on the left-hand side is:
\begin{align*}
&  G_{t-r}(x-z)-G_{t-r}(x'-z) -
G_{t-r}(x-z')+G_{t-r}(x'-z')+\\
& \quad \sum_{n\geq 1}\int_{r<t_1<\ldots<t_n<t} \int_{\bR^n}
\big( G_{t-t_n}(x-x_n)-G_{t-t_n}(x'-x_n)  \big)
\prod_{i=1}^{n-1}G_{t_{i+1}-t_{i}}(x_{i+1}-x_{i})\\
& \quad \quad \quad \quad \quad \quad \quad
\big( G_{t_1-r}(x_1-z)-G_{t_1-r}(x_1-z') \big)
W(dt_1,dx_1) \ldots W(dt_n,dx_n)
\end{align*}
Then we perform the formal change of variables $y_i=x_i-z$ for $i=1,\ldots,n$ in the multiple integral of order $n$.
The conclusion follows using the fact that $W\stackrel{d}{=}W^{(z)}$.
\end{proof}

\begin{lemma}
\label{Lemma-v}
	For any $p\ge 2$,  $q>0$ and $ t \in \R_+$, we have
\begin{align*}
	\sup_{\substack{0 \leq r \leq s \leq t\\z \in \R}}\int_{\bR} \big\|v^{(r,z)}(s,x') \big\|_p^q dx' +
	\sup_{\substack{0 \leq r \leq s \leq t\\x \in \R}}\int_{\bR} \big\|v^{(r,z')}(s,x)\big\|_p^q dz'  \le C,
\end{align*}
where $C>0$ is a constant that depends on $(t, p ,q, H)$.
\end{lemma}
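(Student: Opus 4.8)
The plan is to reduce the entire statement to Lemma~\ref{lmm_vG}. That lemma already provides the pointwise bound $\|v^{(r,z)}(s,x)\|_p \le C\, G_{s-r}(x-z)$ with $C=C(t,p,H)$, and the wave kernel has the explicit form $G_{s-r}(y)=\tfrac12 \mathbf 1_{\{|y|<s-r\}}$ from \eqref{def-G}. Once these two facts are combined, the $L^q$-integral in the spatial variable becomes the integral of a constant multiple of the indicator of an interval of length $2(s-r)\le 2t$, which is finite and, crucially, independent of $(r,s,z)$.

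Concretely, for the first term I would estimate, for any $q>0$,
\[
\int_{\bR} \|v^{(r,z)}(s,x')\|_p^q\, dx'
\le C^q \int_{\bR} \big(G_{s-r}(x'-z)\big)^q\, dx'
= \frac{C^q}{2^q}\int_{\bR} \mathbf 1_{\{|x'-z|<s-r\}}\, dx'
= \frac{C^q}{2^q}\,2(s-r)
\le 2^{1-q}C^q\, t,
\]
where I used that an indicator function raised to any positive power equals itself. Taking the supremum over $0\le r\le s\le t$ and $z\in\bR$ leaves a constant depending only on $(t,p,q,H)$. For the second term I would argue identically, using $\|v^{(r,z')}(s,x)\|_p\le C\, G_{s-r}(x-z')$ and integrating in $z'$ over the interval $\{z':|x-z'|<s-r\}$, again of length $2(s-r)$; alternatively one may first apply the translation identity in part~(a) of Lemma~\ref{lmm_trans-v} to write $\|v^{(r,z')}(s,x)\|_p=\|v^{(r,0)}(s,x-z')\|_p$ and change variables. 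Adding the two bounds yields the claim.

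There is essentially no obstacle here: all of the analytic content is already packaged in Lemma~\ref{lmm_vG}, whose proof exploits the special structure $G_t=\tfrac12\mathbf 1_{\{|\cdot|<t\}}$ of the one-dimensional wave kernel (property~(ii) highlighted in the introduction). The only points worth a line of care are that the argument is uniform over $0\le r\le s\le t$ (it is, since the right-hand side above does not involve $r,s$), and that it is valid for all $q>0$ rather than just $q\ge 1$ (it is, because the integrand is dominated by an indicator function).
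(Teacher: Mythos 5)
Your argument is correct and is exactly the route the paper takes: its proof of Lemma \ref{Lemma-v} simply cites Lemma \ref{lmm_vG}, and your write-up just makes the implicit computation with $G_{s-r}=\tfrac12\mathbf 1_{\{|\cdot|<s-r\}}$ explicit. Nothing is missing.
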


\begin{proof}
This is a direct consequence of Lemma \ref{lmm_vG}.
\end{proof}

The properties listed in the following lemma have been proved in \cite{BY23-2}: parts a) and c) correspond to relations (98)--(99), ibid. and part b) was shown in the proof of Lemma 5.4c), ibid. Note that $v^{(r,z)}$ is denoted by $V_1^{(r,z)}$ in \cite{BY23-2}.

\begin{lemma}
\label{lem-v}
For any $p\ge 2$ and $t \in \R_+$, we have:
\begin{enumerate}[{\rm a)}]
\item \label{lem-va} $ \displaystyle  \sup_{0 \leq r \leq s \leq t}\int_{\bR^2}\left\| v^{(r,0)}(s,x) - v^{(r,h)}(s,x) \right\|_p^2 |h|^{2H-2}dxdh \leq C,$

\item \label{lem-vb} $\displaystyle \sup_{0 \leq r \leq s \leq t}\int_{\bR^2}\left\| v^{(r,0)}(s,x) - v^{(r,0)}(s,x+h) \right\|_p^2 |h|^{2H-2}dxdh \leq C,$

\item \label{lem-vc} $\begin{aligned}[t]
 \sup_{0 \leq r \leq s \leq t }\int_{\bR^3}  & \left\| v^{(r,0)}(s,x)-v^{(r,h)}(s,x) - v^{(r,0)}(s,x+\hbar)+v^{(r,h)}(s,x+\hbar) \right\|_p^2\\
 &\times  |h|^{2H-2}|\hbar|^{2H-2} dx dh d\hbar \le C,
\end{aligned}$
\end{enumerate}
where $C>0$ is a constant that depends on $(t,p,H)$.
\end{lemma}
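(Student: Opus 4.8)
The quantities a)--c) are, up to the change of notation $v^{(r,z)}=V_1^{(r,z)}$, exactly relations (98)--(99) of \cite{BY23-2} (for a) and c)) and a step in the proof of Lemma 5.4c) there (for b)), so one may simply invoke these. A self-contained argument runs parallel to the proof of Lemma \ref{Lemma-difference of u}, applied to the kernels $g_n$ of \eqref{def-gk} in place of those of $u$, as I now sketch.

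\emph{Reduction.} Fix $p\ge2$ and $0\le r\le s\le t$. Split each of the integrals in a)--c) according to whether the increment variable(s) have modulus $>1$ or $\le1$. Over the region where some increment variable exceeds $1$ in modulus, the bound is immediate: by the triangle inequality, Lemma \ref{lmm_vG} and Lemma \ref{Lemma-v}, that contribution is $\lesssim\int_{|h|>1}|h|^{2H-2}\,dh<\infty$ (using $2H-2<-1$; in part c) one factors out $\sup\int_{\bR}\|\cdots\|_p^2\,dx<\infty$ and treats $|h|>1$ and $|\hbar|>1$ in turn). So it suffices to control the contribution of the region where all increment variables have modulus $\le1$.

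\emph{Small increments.} By the chaos expansion \eqref{chaos-v}, each of the three fields equals a difference of $G$-functions (whose contribution to the weighted integral is finite by an elementary computation) plus $\sum_{n\ge1}I_n(\phi_n)$, where $\phi_n$ is $g_n$ with the relevant factor(s) differenced: the source factor $G_{t_1-r}(x_1-z)$ in $z$ for a), the output factor $G_{s-t_n}(x-x_n)$ in $x$ for b), and both of them (a rectangular increment) for c). By hypercontractivity and Minkowski's inequality in the relevant weighted $L^2$-space, the contribution of the series is bounded by $\big(\sum_{n\ge1}(p-1)^{n/2}\mathcal{J}_n^{1/2}\big)^2$, where $\mathcal{J}_n$ is the corresponding weighted integral of $\|I_n(\phi_n)\|_2^2$ --- a quantity of exactly the type called $J_{n,h}$ in the proof of Lemma \ref{Lemma-difference of u}. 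Passing to the spectral representation \eqref{def_inn} of $\|\phi_n(\pmb t_n,\cdot)\|_{\cP_0^{\otimes n}}^2$, the differenced factor produces an oscillatory weight $2\bigl(1-\cos(\eta h)\bigr)$ at a suitable linear combination $\eta$ of the frequency variables; one then bounds $\prod_j|\eta_j-\eta_{j-1}|^{1-2H}$ by \eqref{prod}, performs the frequency integrations using \eqref{sin-int} and the identity $\int_{\bR}\bigl(1-\cos(\eta h)\bigr)|h|^{2H-2}\,dh=(\text{const})\,|\eta|^{1-2H}$, and closes the time integrations with \eqref{beta-int} and $\Gamma(an+b+1)\ge c_a^n(n!)^a$; the doubly-differenced kernel in c) produces a product $\bigl(1-\cos(\eta h)\bigr)\bigl(1-\cos(\tilde\eta\hbar)\bigr)$, handled the same way in both increments. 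This gives $\mathcal{J}_n\le C^n/(n!)^{2H+1}$ with $C=C(t,p,H)$ --- the exponent $2H+1$ of $n!$ coming from taking $a=2H+1$ in the $\Gamma$-bound, and the step requiring $H>1/4$ being precisely the integrability of $|h|^{2H}\,|h|^{2H-2}$ over $\{|h|\le1\}$. Since $(n!)^{H+1/2}$ grows faster than any geometric sequence, $\sum_{n\ge1}(p-1)^{n/2}\mathcal{J}_n^{1/2}<\infty$. Translation invariance (Lemma \ref{lmm_trans-v}) is used along the way to reduce the number of free spatial integrations.

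\emph{Main obstacle.} The crux is this small-increment estimate for the $v$-kernels, and it is most delicate in part c). First, the rectangular increment attaches two oscillatory factors --- one to the ``top'' frequency $\xi_1+\dots+\xi_n$, one to the ``bottom'' --- and one must harvest the full decay $|h|^{2H}|\hbar|^{2H}$ needed against the singular weight $|h|^{2H-2}|\hbar|^{2H-2}$ while still retaining the factorial gain $(n!)^{-(2H+1)}$ from the time integration. Second, unlike for $u$, the extra factor $G_{t_1-r}(x_1-z)$ at the bottom of the $G$-chain spoils the clean telescoping Fourier structure; one copes with this either by working directly with the Gagliardo representation \eqref{Gagliardo} of the $\cP_0$-norm --- turning each coordinate into an explicit increment --- or by first integrating out the source point $z$ (using translation invariance and an $L^1_z$-bound of the type in Lemma \ref{Lemma-v}) before Fourier-transforming. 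Carrying out this bookkeeping is exactly the computation cited from \cite{BY23-2}.
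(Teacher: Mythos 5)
Your proposal matches the paper: the paper proves this lemma purely by citation to \cite{BY23-2} --- parts a) and c) are relations (98)--(99) there and part b) is a step in the proof of Lemma 5.4c), with $v^{(r,z)}$ denoted $V_1^{(r,z)}$ --- which is precisely your opening sentence. Your supplementary sketch is a fair outline of the computation carried out in that reference (and rightly flags that the tied source factor $G_{t_1-r}(x_1-z)$ destroys the clean telescoping Fourier structure available for $u$), but since you defer that bookkeeping back to the same citation, your argument and the paper's are essentially the same.
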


\begin{lemma}
\label{Lemma-difference of v}
For any $p\ge 2$ and $t \in \R_+$, we have:
\begin{enumerate}[{\rm a)}]
	\item \label{lmm-diff-va} $\displaystyle \sup_{0 \leq r \leq s \leq t }\int_{\bR} \left( \int_{\bR} \big\| v^{(r,0)}(s,x)-v^{(r,h)}(s,x) \big\|_p dx \right)^2 |h|^{2H-2} dh \le C,$

	\item \label{lmm-diff-vb} $\displaystyle \sup_{0 \leq r \leq s \leq t }\int_{\bR} \left( \int_{\bR} \big\| v^{(r,0)}(s,x)-v^{(r,0)}(s,x+h) \big\|_p dx \right)^2 |h|^{2H-2} dh \le C,$

	\item \label{lmm-diff-vc} $\begin{aligned}[t]
\sup_{0 \leq r \leq s \leq t } \int_{\bR^2} & \left( \int_{\bR} \big\| v^{(r,0)}(s,x)-v^{(r,h)}(s,x) - v^{(r,0)}(s,x+ \hbar)+v^{(r,h)}(s,x+\hbar) \big\|_p dx \right)^2 \\
&\times |h|^{2H-2}|\hbar|^{2H-2} dh d\hbar \le C,
	\end{aligned}$
\end{enumerate}
where $C>0$ is a constant that depends on $(t,p,H)$.
\end{lemma}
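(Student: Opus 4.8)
The plan is to obtain all three estimates as direct consequences of Lemma~\ref{lem-v}, by combining the Cauchy--Schwarz inequality with the observation that, for fixed $r$ and $s$, the map $x\mapsto\big\|v^{(r,z)}(s,x)\big\|_p$ is supported in an interval of length at most $2t$. First I would record this support property. By Lemma~\ref{lmm_vG} (equivalently, by \eqref{special-G}) we have $v^{(r,z)}(s,x)=2G_{s-r}(x-z)\,v^{(r,z)}(s,x)$ almost surely, and since $G_{s-r}(y)=\frac{1}{2}1_{\{|y|<s-r\}}$ vanishes whenever $|y|\ge s-r$ by \eqref{def-G}, this forces $v^{(r,z)}(s,x)=0$ a.s.\ whenever $|x-z|\ge s-r$. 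Consequently $x\mapsto\big\|v^{(r,z)}(s,x)\big\|_p$ vanishes outside the interval $\big(z-(s-r),\,z+(s-r)\big)$, whose Lebesgue measure is at most $2(s-r)\le 2t$.

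For part a), the integrand $x\mapsto\big\|v^{(r,0)}(s,x)-v^{(r,h)}(s,x)\big\|_p$ therefore vanishes outside $\big(-(s-r),s-r\big)\cup\big(h-(s-r),h+(s-r)\big)$, a set of measure at most $4(s-r)\le 4t$ \emph{uniformly in $h$}. Cauchy--Schwarz then gives
\[
\left(\int_{\bR}\big\|v^{(r,0)}(s,x)-v^{(r,h)}(s,x)\big\|_p\,dx\right)^{2}\le 4t\int_{\bR}\big\|v^{(r,0)}(s,x)-v^{(r,h)}(s,x)\big\|_p^{2}\,dx ,
\]
so multiplying by $|h|^{2H-2}$, integrating in $h\in\bR$, taking the supremum over $0\le r\le s\le t$, and invoking part~a) of Lemma~\ref{lem-v} produces the stated bound. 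Part~b) is handled in exactly the same way, since $x\mapsto\big\|v^{(r,0)}(s,x)-v^{(r,0)}(s,x+h)\big\|_p$ likewise vanishes outside a set of measure at most $4t$; Cauchy--Schwarz followed by part~b) of Lemma~\ref{lem-v} closes it. For part~c), the $x$-integrand is a sum of four translates of $v^{(r,\cdot)}(s,\cdot)$, each supported in an interval of length $2(s-r)$, so it vanishes outside a set of measure at most $8(s-r)\le 8t$, again uniformly in $(h,\hbar)$; Cauchy--Schwarz and part~c) of Lemma~\ref{lem-v} then finish the proof.

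I do not anticipate a genuine obstacle. The only point that requires attention is that the bound on the Lebesgue measure of the $x$-support is independent of the increment variables $h$ (and $\hbar$), so that the constants $4t$ and $8t$ produced by Cauchy--Schwarz are uniform; this is precisely the finite-propagation-speed structure of the wave kernel $G_t=\frac{1}{2}1_{\{|\cdot|<t\}}$ as expressed through relation \eqref{vG}. Granting that, the three estimates follow at once from the corresponding estimates in Lemma~\ref{lem-v}.
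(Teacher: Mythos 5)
Your proposal is correct. The key observation — that \eqref{special-G} (equivalently Lemma \ref{lmm_vG}) forces $v^{(r,z)}(s,x)=0$ a.s.\ when $|x-z|\ge s-r$, so the $x$-integrand in each part is supported in a set of measure at most $4t$ (resp.\ $8t$) \emph{uniformly} in $h$ (and $\hbar$) — is valid, and a single Cauchy--Schwarz then reduces parts a), b), c) directly to the corresponding parts of Lemma \ref{lem-v}, whose integrals run over all $h\in\bR$, so no separate treatment of large $|h|$ is needed.

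Your route is genuinely more streamlined than the paper's, although both ultimately rest on the same two ingredients: the finite-propagation-speed relation \eqref{special-G} and the $L^2$-type bounds of Lemma \ref{lem-v}. The paper splits the $h$-integral into $|h|>1$ and $|h|\le 1$ (using Lemma \ref{Lemma-v} for the far region), and on $|h|\le 1$ inserts the factors $2G_{s-r}$ explicitly, decomposing the difference into a main term (handled by Cauchy--Schwarz with $\|G_{s-r}\|_{L^2}^2=(s-r)/2$ and Lemma \ref{lem-v}) plus remainder terms involving increments of $G$, controlled via $\|G_{s-r}(\cdot)-G_{s-r}(\cdot-h)\|_{L^1(\R)}\le |h|$ and the uniform moment bound \eqref{v-bounded}; for part c) this produces four terms $F_1,\dots,F_4$ and an appeal to estimates (58)--(59) of \cite{BY23-2}. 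You bypass the $h$-splitting, the term-by-term decompositions, and the external references entirely by using only the support consequence of \eqref{special-G} with a support bound independent of the translation parameters. What the paper's longer decomposition buys is alignment with the template of \cite[Lemma 5.5]{BY23-2}, to which it can defer details; your argument is self-contained and shorter, at the modest cost of a slightly larger (but still $(t,p,H)$-dependent) constant.
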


\begin{proof}
a) The proof follows from the same argument as in that of \cite[Lemma 5.5]{BY23-2}, with minor modifications. Denote by $I_1$ and $I_2$ the integral in \ref{lmm-diff-va}) on the region $\{h ; |h| > 1\}$ and $\{h ; |h| \leq 1\}$, respectively.
Then, by Lemma \ref{Lemma-v},
\begin{align*}
\sup_{0\leq r \leq s \leq t}\int_{\bR} \left\| v^{(r,0)}(s,x) - v^{(r,h)}(s,x) \right\|_p dx
\leq &
\sup_{0\leq r \leq s \leq t}\int_{\bR} \Big(\left\| v^{(r,0)}(s,x)\right\|_p + \left\| v^{(r,h)}(s,x) \right\|_p dx \Big)\\
 \leq & C,
\end{align*}
 and hence
 \[
 I_1 \leq C \int_{|h|>1}|h|^{2H-2} dh = C.
 \]
Using \eqref{special-G}, we have
$I_2 \leq  C \big(I_{2,1}+I_{2,2}\big)$, where
\begin{align*}
I_{2,1} & \coloneqq \sup_{0\leq r \leq s \leq t}\int_{|h|\leq 1}\left(  \int_{\bR}
G_{s-r}(x) \left\| v^{(r,0)}(s,x) - v^{(r,h)}(s,x) \right\|_p dx   \right)^2 |h|^{2H-2} dh\\
I_{2,2}& \coloneqq \sup_{0\leq r \leq s \leq t} \int_{|h|\leq 1}\left(  \int_{\bR}
\big|G_{s-r}(x) - G_{s-r}(x-h)\big| \left\| v^{(r,h)}(s,x) \right\|_p dx   \right)^2 |h|^{2H-2} dh.
\end{align*}	
As a result of the Cauchy-Schwarz inequality and the fact that $\|G_{s-r}\|_{L^2(\R)}^2 = (s-r)/2$,
\[
I_{2,1} \leq C \sup_{0\leq r \leq s \leq t}\int_{\bR^2}\left\| v^{(r,0)}(s,x) - v^{(r,h)}(s,x) \right\|_p^2 |h|^{2H-2}dxdh.
\]
The last integral is uniformly bounded, by Lemma \ref{lem-v}\ref{lem-va}).
Using \eqref{v-bounded} and the fact that $\|G_{s-r}(\cdot) - G_{s-r}(\cdot -h)\|_{L^1(\R)} \leq |h|$, we have:
\begin{align*}
		I_{2,2}	& \leq  C \sup_{0\leq r \leq s \leq t}\int_{|h|\leq 1} \left( \int_{\bR} \big|G_{s-r}(x) - G_{s-r}(x-h)\big|  dx  \right)^2  |h|^{2H-2}dh \leq C \int_{|h|\leq 1} |h|^{2H} dh=C.
\end{align*}

 b) The proof is similar to that of \ref{lmm-diff-va}), and thus skipped.

c)
Denote by $J$ the integral in \ref{lmm-diff-vc}). Then,
\[
J \leq C \big(J_1 + J_2 + J_3 + J_4\big),
\]
where
\[
J_k \coloneqq \sup_{0\leq r \leq s \leq t}\int_{\bR^2} \left( \int_{\bR}F_k(x,h,\hbar)dx\right)^2 |h|^{2H-2} |\hbar|^{2H-2} dh d\hbar,\quad k = 1, \dots, 4,
\]
with
\begin{gather*}
F_1 \coloneqq 2 G_{s-r}(x) \big\|v^{(r,0)}(s,x) - v^{(r, h)}(s,x) - v^{(r,0)}(s,x + \hbar) + v^{(r,h)}(s,x + \hbar)\big\|_p,\\
F_2 \coloneqq 2|G_{s-r}(x) - G_{s-r}(x + \hbar )| \big\|v^{(r,0)}(s,x + \hbar) - v^{(r, h)}(s,x + \hbar)\big\|_p,\\
F_3 \coloneqq  2|G_{s-r}(x) - G_{s-r}(x - h) - G_{s-r}(x + \hbar ) + G_{s-r}(x + \hbar  - h)|\big\|v^{(r, h)}(s,x)\big\|_p,
\shortintertext{and}
F_4 \coloneqq 2 |G_{s-r}(x + \hbar) - G_{s-r}(x + \hbar - h)| \big\| v^{(r, h)}(s,x) - v^{(r, h)}(s,x + \hbar)\big\|_p.
\end{gather*}
The conclusion follows from the same argument as in the proof of \cite[Lemma 5.5e)]{BY23-2}, by using Lemmas \ref{lem-v} and \ref{Lemma-v}, as well as relations (58) and (59), ibid. The proof of this lemma is complete.
\end{proof}

\subsection{Malliavin derivatives and (HAM) with the delta initial velocity}
\label{section-Malliavin}

In this section, we establish the connection between the Malliavin derivatives $Du$ and $D^2 u$ and the solution $v^{(r,z)}$ of \eqref{ham-delta}.

\begin{lemma}\label{lmm-dprod}
Let $u$ be the solution to \eqref{HAM}, and let $v^{(r,z)}$ be the solution to \eqref{ham-delta} with arbitrary $(r,z) \in \R_+\times \R$. Then,
\begin{enumerate}[{\rm a)}]
\item \label{D-prod}
$\displaystyle
D_{r,z}u(t,x)=u(r,z) v^{(r,z)}(t,x)$, for all $0 \leq r \leq t <\infty$ and $x,z \in \R$.

\item \label{D2-prod}
$\displaystyle D_{(\theta,w),(r,z)}^2 u(t,x)=u(\theta,w)v^{(\theta,w)}(r,z) v^{(r,z)}(t,x)$ for all $0 \leq \theta \leq r \leq t <\infty $ and $x,z,w \in \R$.
\end{enumerate}
\end{lemma}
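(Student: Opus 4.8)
The plan is to deduce both statements from the chaos-representations already recorded in this section, by showing that the two sides of a) solve the \emph{same} linear Volterra-type stochastic integral equation and then invoking uniqueness, and to obtain b) from a) through the Leibniz rule together with a causality observation. Throughout I use that $u(t,x)$ and $v^{(r,z)}(t,x)$ belong to $\mathbb{D}^{1,p}$ for every $p\ge 2$: this follows from their chaos expansions and the factorial decay $\les C^n/(n!)^{(2H+1)/2}$ of the $L^2$-norms of the associated kernels (established in \cite{BJQ15,BJQ17,SSX20}), which also controls the Malliavin derivatives.

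For a), fix $(r,z)$ and apply $D_{r,z}$ to the mild formulation \eqref{def-sol}. Using the commutation of the Malliavin derivative with the Skorohod $(=$ It\^o$)$ integral, together with the causality property $D_{r,z}u(s,y)=0$ for $s<r$ — which is immediate from the chaos expansion of $u$, since $f_n(\cdot,s,y)$ is time-supported on $T_n(s)$ — one gets, for $r\le t$,
\[
D_{r,z}u(t,x)=u(r,z)\,G_{t-r}(x-z)+\int_r^t\int_{\bR}G_{t-s}(x-y)\,D_{r,z}u(s,y)\,W(ds,dy).
\]
On the other hand, multiply \eqref{eq-v} by the $\mathcal F_r$-measurable random variable $u(r,z)$; since $u(r,z)$ is $\mathcal F_r$-measurable and the integrand $v^{(r,z)}(s,y)$ is adapted for $s\ge r$, $u(r,z)$ may be moved inside the stochastic integral, so $\Phi(t,x):=u(r,z)v^{(r,z)}(t,x)$ satisfies
\[
\Phi(t,x)=u(r,z)\,G_{t-r}(x-z)+\int_r^t\int_{\bR}G_{t-s}(x-y)\,\Phi(s,y)\,W(ds,dy).
\]
These are the same linear equation, and its solution is unique in the natural class $\{\Psi:\sup_{[r,T]\times\bR}\|\Psi(t,x)\|_p<\infty\}$ (proved exactly as for \eqref{HAM}, by Picard iteration with the $n$-th increment controlled via \eqref{beta-int}); hence $D_{r,z}u(t,x)=u(r,z)v^{(r,z)}(t,x)$. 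As an alternative, a) can be proved by a direct chaos comparison: in the product $u(r,z)v^{(r,z)}(t,x)$ all contractions vanish because the time supports of the two kernel families — contained in $T_m(r)$ and in $\{r<s_1<\cdots<s_k<t\}$ — are disjoint, so one is left to match the resulting kernels with those of $D_{r,z}u(t,x)=\sum_{n\ge1}nI_{n-1}(\widetilde f_n(\cdot,(r,z),t,x))$.

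For b), fix $0\le\theta\le r\le t$ and $w,z\in\bR$. By a) and the Leibniz rule for the Malliavin derivative of a product — applicable because both factors lie in $\mathbb{D}^{1,p}$ for all $p$ and have all moments, by \eqref{u-bound}, \eqref{v-bounded} and part a) —
\[
D^2_{(\theta,w),(r,z)}u(t,x)=\big(D_{(\theta,w)}u(r,z)\big)\,v^{(r,z)}(t,x)+u(r,z)\,D_{(\theta,w)}v^{(r,z)}(t,x).
\]
Part a) applied at time $r\ge\theta$ gives $D_{(\theta,w)}u(r,z)=u(\theta,w)v^{(\theta,w)}(r,z)$, while the chaos expansion \eqref{chaos-v} shows that the kernels $g_n(\cdot,r,z,t,x)$ of $v^{(r,z)}(t,x)$ are time-supported on $\{r<s_1<\cdots<s_n<t\}$, so $D_{(\theta,w)}v^{(r,z)}(t,x)=0$ for a.e.\ $\theta\le r$ and the second term drops out, yielding $D^2_{(\theta,w),(r,z)}u(t,x)=u(\theta,w)v^{(\theta,w)}(r,z)v^{(r,z)}(t,x)$. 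The only genuinely delicate point, and hence the main obstacle, is verifying the Malliavin-regularity prerequisites (membership of $u$, $v^{(r,z)}$ and the relevant products in $\mathbb{D}^{1,p}$, the commutation formula, and the causality statements); if one prefers the chaos route to a), the main obstacle instead becomes the bookkeeping identity matching $n\,\widetilde f_n(\cdot,(r,z),t,x)$ with the symmetrization of $g_{n-1}(\cdot,r,z,t,x)$, $G_{t-r}(x-z)f_{n-1}(\cdot,r,z)$ and $\sum_{m+k=n-1}f_m(\cdot,r,z)\otimes g_k(\cdot,r,z,t,x)$, obtained by sorting the $n$ time variables of $f_n$ around the pinned value $r$.
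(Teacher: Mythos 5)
Your main route (show both sides solve the same linear Volterra equation and invoke uniqueness, then get b) from a) by the Leibniz rule and causality) is genuinely different from the paper's proof. The paper argues directly on the Wiener chaos expansion of $D_{r,z}u(t,x)$: it splits each kernel $f_j^{(n)}(\cdot,r,z,t,x)$ as $f_{j-1}(\cdot,r,z)\,g_{n-j}(\cdot,r,z,t,x)$, observes that the two factors have disjoint temporal supports so that all contractions vanish and the product formula gives $I_{n-1}(f_j^{(n)})=I_{j-1}(f_{j-1})\,I_{n-j}(g_{n-j})$, and then resums to recognize $u(r,z)v^{(r,z)}(t,x)$; part b) is done ``similarly''. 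So the ``alternative chaos comparison'' you sketch at the end is essentially the paper's argument, and your b) (product rule for independent factors plus $D_{\theta,w}v^{(r,z)}(t,x)=0$ for $\theta<r$) is a clean shortcut consistent with it.

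There is, however, a concrete gap in your primary route: the uniqueness step as stated would not go through. You claim uniqueness for the linear equation in the class $\{\Psi:\sup\|\Psi(t,x)\|_p<\infty\}$, ``proved exactly as for \eqref{HAM} by Picard iteration with \eqref{beta-int}''. For $H<\tfrac12$ this is not how uniqueness for \eqref{HAM} works: by the Gagliardo representation \eqref{Gagliardo}, the $L^2(\Omega)$-norm of the stochastic integral of $G_{t-s}(x-\cdot)\Delta(s,\cdot)$ is controlled by spatial increments $\|\Delta(s,y)-\Delta(s,y')\|_2^2\,|y-y'|^{2H-2}$, not by $\sup_y\|\Delta(s,y)\|_p$; indeed, with only a sup-moment bound the stochastic integral in your equation for $D_{r,z}u$ is not even a priori well defined. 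In \cite{BJQ15,LHW21} existence and uniqueness are established in spaces encoding exactly this spatial regularity, so to close your argument you must either (i) verify that both candidates $D_{r,z}u(t,\cdot)$ and $u(r,z)v^{(r,z)}(t,\cdot)$ lie in such a space (which requires increment estimates of the type proved later in the paper, risking circularity with \eqref{D-bound}), or (ii) run the uniqueness chaos by chaos, i.e.\ project the homogeneous equation for the difference onto each Wiener chaos and show the kernels vanish inductively --- at which point you have essentially reverted to the paper's chaos identification. A similar (acknowledged but unverified) issue is the commutation of $D_{r,z}$ with the Skorohod integral, which needs the hypotheses of \cite[Prop.~1.3.8]{Nua06} checked for the integrand $G_{t-s}(x-\cdot)u(s,\cdot)$; the factorial decay of the chaos kernels does make this work, but it is part of the same regularity bookkeeping. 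Once the function class and these prerequisites are fixed, your argument is correct, and part b) as you wrote it is fine since $u(r,z)$ and $v^{(r,z)}(t,x)$ are independent, so the product rule and the causality cancellation are legitimate.
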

\begin{proof}
We only provide the proof of \ref{D-prod}). One can show \ref{D2-prod}) similarly. Note that the Malliavin derivative $D_{r,z}u(t,x)$ has the chaos expansion: for any $(r,z) \in [0,t]\times \R$,
\begin{equation}
\label{chaos-D}
D_{r,z}u(t,x)=\sum_{n\geq 1}n I_{n-1}(\bar{f}_n(\cdot,r,z,t,x))=\sum_{n\geq 1} \sum_{j=1}^{n}I_{n-1}(f_j^{(n)}(\cdot,r,z,t,x)),
\end{equation}
where
\begin{align*}
f_j^{(n)}(  \pmb{t}_{n-1}  , \pmb{x}_{n-1},r,z,t,x) = &
f_n(t_1,\ldots,t_{j-1},r,t_{j},\dots, t_{n-1},x_1,\dots, x_{j-1}, z, x_{j},\ldots,
x_{n-1},t,x)\\
= &
G_{t-t_{n-1}}(x-x_{n-1})\times \dots \times G_{t_j-r}(x_j-z)G_{r-t_{j-1}}(z-x_{j-1})\\
& \times \dots \times G_{t_2-t_1}(x_2-x_1) \mathbf{1}_{T_{n-1}^{j}(t,r)}(\pmb{t}_{n-1}),
\end{align*}
with
\begin{align*} T_{n-1}^{j}(t,r) \coloneqq \{ \pmb{t}_{n-1} \in [0,t]^{n-1} ; 0<t_1<\ldots<t_{j-1}<r<t_j<\ldots<t_{n-1}<t\}.
\end{align*}
Note that the function $f_j^{(n)}(\cdot,r,z,t,x)$ can be written as follows:
\begin{align}\label{decomp-fjn}
f_j^{(n)}(  \pmb{t}_{n-1}  , \pmb{x}_{n-1},r,z,t,x)
= & f_{j}(\pmb{t}_{j - 1}, \pmb{x}_{j - 1},r,z) g_{n-j}(\pmb{t}_{j:n-1}, \pmb{x}_{j;n-1},r,z,t,x),
\end{align}
where $g_{n-j}$ is given by \eqref{def-gk} with $g_0(r,z,t,x) \coloneqq G_{t-r}(x-z)$, $\pmb{t}_{j:n-1} \coloneqq (t_{j},\dots, t_{n-1})$, and $\pmb{x}_{j:n-1} \coloneqq (x_{j},\dots, x_{n-1})$.
Because the two functions appearing in this decomposition have ``disjoint temporal supports'' (see the footnote on page 16 of \cite{BNQZ22}) and the noise is white in time, it follows
\[
f_{j - 1} (\cdot, r, z) \otimes_{k} g_{n-j}(\cdot, r,z,t,x) \equiv 0, \quad \text{for all } k = 1,\dots, (j-1) \wedge (n-j).
\]
As a result, using the product formula e.g., \cite[Proposition 1.1.3]{Nua06}, we have
\[
I_{n-1}\big(f_j^{(n)}(\cdot,r,z,t,x)\big) =
 I_{j-1}\big(f_{j-1}(\cdot,r,z)\big)  I_{n-j} \big(g_{n-j}(\cdot,r,z,t,x)\big),
\]
Interchanging the order of summation in \eqref{chaos-D},
we obtain:
\begin{align*}
D_{r,z}u(t,x) = & \sum_{j\geq 1} \sum_{n\geq j} I_{n-1}(f_j^{(n)}\big(\cdot,r,z,t,x)\big) \\
 = & \sum_{j\geq 1} I_{j-1} \big(f_{j-1}(\cdot,r,z)\big) \sum_{n\geq j} I_{n-j} \big(g_{n-j} (\cdot,r,z,t,x)\big).
\end{align*}
This justifies \ref{D-prod}). The proof of this lemma is complete.
\end{proof}

\subsection{Moment estimates for the Malliavin derivatives}

In this section, we derive some estimates for the moments of the first and second Malliavin derivatives of $u(t,x)$.

Notice that $u$ is adapted and the noise has independent increments in time. Therefore, $u(r,z)$ is independent of $v^{(r,z)}$. Hence, using \eqref{u-bound}, for any $p\geq 2$, $r \in [0,t]$ and $x,z \in \bR$,
\[
\|D_{r,z}u(t,x)\|_p = \|u(r,z)\|_{p} \|v^{(r,z)}(t,x)\|_{p} \leq C_t \|v^{(r,z)}(t,x)\|_{p}.
\]
A similar deduction can be done for $D^2 u(t,x)$ and thus taking account of Lemma \ref{lmm_vG}, we can summarize the next proposition.
\begin{proposition}
Let $u$ be the solution to \eqref{HAM}, then for any $0 \leq r  \leq s \leq t < \infty$ and $x,y,z \in \R$, the next inequalities hold:
\begin{equation}
\label{D-bound}
\|D_{r,z}u(t,x)\|_p \leq C_t G_{t-r}(x-z),
 \end{equation}
 and
 \begin{equation}
\label{D2-bound}
\|D_{(r,z)(s,y)}^2 u(t,x)\|_p \leq
		C_tG_{t-s}(x-y)G_{s-y}(y-z),
\end{equation}
where $C_t > 0$ depending only on $t$.
\end{proposition}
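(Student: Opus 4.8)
The plan is to obtain both estimates as immediate consequences of the factorization identities in Lemma~\ref{lmm-dprod}, the independent-increments property of the noise (it is white in time), the uniform moment bound \eqref{u-bound} for $u$, and the pointwise kernel bound \eqref{vG} of Lemma~\ref{lmm_vG}. No new estimate is required; the only content is the bookkeeping of the independence structure, which is essentially the short computation already indicated in the paragraph preceding the proposition.

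First I would treat \eqref{D-bound}. By part a) of Lemma~\ref{lmm-dprod}, $D_{r,z}u(t,x)=u(r,z)\,v^{(r,z)}(t,x)$ for $0\le r\le t$ and $x,z\in\bR$. Since $u$ is adapted, $u(r,z)$ is measurable with respect to the noise on $[0,r]\times\bR$, whereas the Volterra equation \eqref{eq-v} shows that $v^{(r,z)}(t,x)$ is built only from the noise on $(r,t]\times\bR$ — one sees this from the Picard iteration for \eqref{eq-v}, each iterate being a stochastic integral against increments of $W$ over $(r,t]$. Hence $u(r,z)$ and $v^{(r,z)}(t,x)$ are independent, so
\[
\|D_{r,z}u(t,x)\|_p=\|u(r,z)\|_p\,\|v^{(r,z)}(t,x)\|_p .
\]
Bounding the first factor by \eqref{u-bound} and the second by Lemma~\ref{lmm_vG} gives $\|D_{r,z}u(t,x)\|_p\le C_t\,G_{t-r}(x-z)$ for every $p\ge2$.

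The bound \eqref{D2-bound} follows in the same way. Since $r\le s$ in the statement, part b) of Lemma~\ref{lmm-dprod}, after the relabelling $(\theta,w)\mapsto(r,z)$ and $(r,z)\mapsto(s,y)$, gives
\[
D^2_{(r,z),(s,y)}u(t,x)=u(r,z)\,v^{(r,z)}(s,y)\,v^{(s,y)}(t,x).
\]
The three factors depend on the noise over $[0,r]\times\bR$, over $(r,s]\times\bR$, and over $(s,t]\times\bR$ respectively, hence are mutually independent, so
\[
\|D^2_{(r,z),(s,y)}u(t,x)\|_p=\|u(r,z)\|_p\,\|v^{(r,z)}(s,y)\|_p\,\|v^{(s,y)}(t,x)\|_p ,
\]
and applying \eqref{u-bound} to the first factor and Lemma~\ref{lmm_vG} to the last two yields the claimed bound (with the second kernel factor read as $G_{s-r}(y-z)$).

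The only step needing genuine justification — and hence the main, albeit minor, obstacle — is the independence assertion: that $v^{(r,z)}$ is measurable with respect to the $\sigma$-field generated by the increments of $W$ on $[r,\cdot]\times\bR$. This is precisely where the Volterra structure of \eqref{eq-v}, rather than the full equation \eqref{HAM}, is used, and it can be established via the Picard scheme together with the fact that a stochastic integral over $(r,t]\times\bR$ is measurable with respect to the corresponding increments of $W$. Everything else is a direct substitution of the already established bounds \eqref{u-bound} and \eqref{vG}, valid for all $p\ge2$, with $C_t$ increasing in $t$.
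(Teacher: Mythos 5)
Your proposal is correct and follows essentially the same route as the paper: the paper also derives \eqref{D-bound} and \eqref{D2-bound} directly from the factorizations of Lemma \ref{lmm-dprod}, the independence of $u(r,z)$ and $v^{(r,z)}$ (adaptedness plus independent increments in time), the uniform bound \eqref{u-bound}, and Lemma \ref{lmm_vG}. Your extra remark on justifying the measurability of $v^{(r,z)}$ with respect to the noise on $(r,t]$ via the Picard scheme, and your reading of the second kernel as $G_{s-r}(y-z)$, only make explicit points the paper leaves implicit.
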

The next proposition about the estimates for the increments of the Malliavin derivatives of $u(t,x)$ will be used in the proof of the main results. To this end, we should first introduce the following notation. For any $r,s,t \in \R_+$ such that $0 \leq r \vee s \leq t$, and $x,y,y',z, z' \in \R$, we denote
\begin{align}\label{def_delta}
\Delta_{h}(r,z,t,x) \coloneqq D_{r, z + h} u(t,x) - D_{r,z} u(t,x),
\end{align}
and
\begin{align}\label{def_rectg}
\square_{h,\hbar}(r, z, s, y, t, x) \coloneqq & D_{(r,z+h),(s,y+\hbar)}^2 u(t,x) -
D_{(r,z),(s,y+\hbar)}^2 u(t,x) \nonumber \\
&  - D_{(r,z+h),(s,y)}^2    u(t,x) + D_{(r,z), (s,y)}^2 u(t,x).
\end{align}

\begin{proposition}\label{prop_incre}
For any $t \in \R_+$, and $p\ge 2$, we have:
\begin{enumerate}[{\rm a)}]
\item \label{prop_incre-a} $\displaystyle \sup_{0\leq r \leq t}\int_{\R} \sup_{z\in \R} \Big(\int_{\R}  \big\|\Delta_{h}(r,z,t,x)\big\|_p dx \Big)^2 |h|^{2H - 2} d h \leq C$,

\item \label{prof_incre-d} $\displaystyle  \sup_{\substack{0\leq r \leq t\\ x' \in \R}}\int_{\R^3}   \big\|\Delta_{h}(r,z,t,x)\big\|_p \big\|\Delta_{h}(r,z,t,x')\big\|_p |h|^{2H - 2} d h dz dx \leq C$,

\item \label{prop_incre-b} $\displaystyle \sup_{0\leq r\vee s \leq t}\int_{\R^2} \sup_{y \in \R}\Big(\int_{\R^2}  \big\| \square_{h,\hbar}(r,z,s,y,t,x) \big\|_p dx dz \Big)^2 |h|^{2H - 2} |\hbar|^{2H - 2} dh d \hbar \leq C$,

\item \label{prop_incre-c} $\displaystyle \sup_{\substack{0\leq r \vee s \leq t\\ x \in \R}}\Big(\int_{\R^2}   \big\| \square_{h,\hbar}(r,z,s,y,t,x) \big\|_p dy dz \Big)^2 |h|^{2H - 2} |\hbar|^{2H - 2} dh d \hbar < C$,
\end{enumerate}
where $C > 0$ is a constant that depends on $(t,p,H)$.
\end{proposition}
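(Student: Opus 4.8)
\medskip
\noindent\textbf{Proof proposal.}
The plan is to reduce every quantity to the product representations of the Malliavin derivatives in Lemma \ref{lmm-dprod}, and then to the increment estimates for $u$ and $v^{(r,z)}$ already recorded in Lemmas \ref{Lemma-difference of u}, \ref{Lemma-v}, \ref{lem-v} and \ref{Lemma-difference of v}. The structural fact I would exploit throughout is that $u$ is adapted while the noise has independent increments in time, so that the $L^p(\Omega)$-norm of a product of factors supported on pairwise disjoint time intervals equals the product of the individual norms.

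\emph{Parts \ref{prop_incre-a}) and \ref{prof_incre-d}).} First I would use Lemma \ref{lmm-dprod}\ref{D-prod}) together with the identity $ab-cd=a(b-d)+(a-c)d$ to write
\[
\Delta_h(r,z,t,x)=u(r,z+h)\bigl(v^{(r,z+h)}(t,x)-v^{(r,z)}(t,x)\bigr)+\bigl(u(r,z+h)-u(r,z)\bigr)v^{(r,z)}(t,x),
\]
a sum of two products of a factor built from the noise on $[0,r]$ and a factor built from the noise on $(r,t]$, whose $L^p$-norms therefore factorize. Bounding each $u$-factor by \eqref{u-bound}, replacing $\|u(r,z+h)-u(r,z)\|_p$ by $\|u(r,0)-u(r,h)\|_p$ via spatial stationarity (Lemma \ref{trans-u}), and using the translation identities of Lemma \ref{lmm_trans-v} to pass the $v$-increments to their base-point-$0$ versions, I would arrive at
\[
\|\Delta_h(r,z,t,x)\|_p\le C_t\bigl\|v^{(r,0)}(t,x-z)-v^{(r,h)}(t,x-z)\bigr\|_p+\|u(r,0)-u(r,h)\|_p\,\bigl\|v^{(r,0)}(t,x-z)\bigr\|_p.
\]
A change of variables $w=x-z$ then shows that $\int_\R\|\Delta_h(r,z,t,x)\|_p\,dx$ is bounded, uniformly in $z$, by $C_t\int_\R\|v^{(r,0)}(t,w)-v^{(r,h)}(t,w)\|_p\,dw+C\|u(r,0)-u(r,h)\|_p$, where I use Lemma \ref{Lemma-v} for the last integral; likewise $\int_\R\|\Delta_h(r,z,t,x')\|_p\,dz$ is bounded by the same quantity, uniformly in $x'$. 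Thus the suprema over $z$ and $x'$ are harmless; squaring and integrating against $|h|^{2H-2}$, part \ref{prop_incre-a}) follows from Lemma \ref{Lemma-difference of v}\ref{lmm-diff-va}) and Lemma \ref{Lemma-difference of u}, while for part \ref{prof_incre-d}) I would pull the uniform-in-$z$ bound for $\int_\R\|\Delta_h(r,z,t,x)\|_p\,dx$ out of the $dx\,dz$-integral and conclude in the same way.

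\emph{Parts \ref{prop_incre-b}) and \ref{prop_incre-c}).} Here I would first use the symmetry of $D^2u$ in its two arguments to reduce to $r\le s$ (the case $s\le r$ being symmetric after relabeling). By Lemma \ref{lmm-dprod}\ref{D2-prod}), $D^2_{(r,z),(s,y)}u(t,x)=u(r,z)\,v^{(r,z)}(s,y)\,v^{(s,y)}(t,x)$, and applying $ab-cd=a(b-d)+(a-c)d$ twice --- first to isolate the factor $v^{(s,\cdot)}(t,x)$, then the factor $u(r,\cdot)$ --- expands $\square_{h,\hbar}(r,z,s,y,t,x)$ into a sum of four triple products, each having exactly one factor built from the noise on each of the three disjoint intervals $[0,r]$, $(r,s]$ and $(s,t]$, so that its $L^p$-norm factorizes. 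In each of these terms I would bound the $u$-factors using \eqref{u-bound} and Lemma \ref{trans-u}, bound every undifferenced $v$-factor by a constant times a wave kernel $G$ via Lemma \ref{lmm_vG}, and reduce every differenced $v$-factor --- a spatial increment, a ``base-point'' increment, or, in one of the four terms, a rectangular increment --- to a base-point-$0$ version via Lemma \ref{lmm_trans-v}. Then I integrate out the two spatial variables ($x$ and $z$ for part \ref{prop_incre-b}), $y$ and $z$ for part \ref{prop_incre-c})): each wave kernel integrates in space to the length of the corresponding time interval, which is $\le t$, and removes one variable; each remaining one-dimensional integral of a $v$-increment becomes, after a translation, one of the inner integrals appearing in Lemma \ref{Lemma-difference of v}, and in particular no longer depends on the remaining free point --- $y$ in part \ref{prop_incre-b}), $x$ in part \ref{prop_incre-c}) --- so its supremum is harmless too. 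Squaring and integrating against $|h|^{2H-2}|\hbar|^{2H-2}$, the term carrying a rectangular increment is controlled by Lemma \ref{Lemma-difference of v}\ref{lmm-diff-vc}), and the remaining three terms by Lemma \ref{Lemma-difference of v}\ref{lmm-diff-va})--\ref{lmm-diff-vb}) together with Lemma \ref{Lemma-difference of u}.

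\emph{Main obstacle.} The spatial integrations and the final $|h|^{2H-2}|\hbar|^{2H-2}$-integrations are routine once the decompositions are in place. The step I expect to be the main obstacle is organizing the twofold telescoping of $\square_{h,\hbar}$ so that each of the four resulting terms genuinely splits into three factors supported on the three disjoint time layers $[0,r]$, $(r,s]$ and $(s,t]$ --- this independence is exactly what licenses the factorization of the $L^p(\Omega)$-norm --- and then pairing each term with the correct increment lemma; the key point is that the undifferenced $v$-factor must be the one carrying the wave kernel $G$, so that one of the two spatial integrations costs only a factor $\le t$ instead of diverging. Once this bookkeeping is set up, the suprema over $r$ and $s$ pass through at no cost, since every bound I invoke from Lemmas \ref{Lemma-difference of u}, \ref{lem-v} and \ref{Lemma-difference of v} already carries the supremum over $0\le r\le s\le t$.
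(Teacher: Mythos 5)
Your proposal is correct and follows essentially the same route as the paper: the product representations of Lemma \ref{lmm-dprod}, the same four-term telescoping of $\Delta_h$ and $\square_{h,\hbar}$ into factors living on disjoint time layers, translation invariance (Lemmas \ref{trans-u}, \ref{lmm_trans-v}), and the increment estimates of Lemmas \ref{Lemma-difference of u}, \ref{Lemma-v} and \ref{Lemma-difference of v}. The only cosmetic deviations are that you invoke exact factorization of $L^p$-norms via independence where the paper uses H\"older at exponents $2p$/$3p$, and your handling of part \ref{prof_incre-d}) by pulling out the uniform-in-$z$ bound is a slightly streamlined version of the paper's four-term expansion (note also that in the fourth term of the $\square_{h,\hbar}$ decomposition both $v$-factors are differenced, so no wave kernel is needed there --- both spatial integrals are absorbed by Lemma \ref{Lemma-difference of v}\ref{lmm-diff-va})).
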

\begin{proof}
a) By using Lemma \ref{lmm-dprod}\ref{D-prod}) , we can write
\begin{align*}
\Delta_{h}(r,z,t,x) =  & u(r,z + h) v^{(r,z + h)}(t,x)-
u(r,z)v^{(r,z)}(t,x) \\
= & \big(u(r,z + h)-u(r,z))v^{(r,z)}(t,x)+
u(r,z + h)\big(v^{(r,z + h)}(t,x)-v^{(r,z)}(t,x) \big).
\end{align*}
Then, it follows from the Cauchy-Schwartz inequality and Lemmas \ref{trans-u} and \ref{lmm_trans-v}\ref{trans-v1'}) that
\begin{align}\label{eq_deltahcs}
\big\|\Delta_{h}(r,z,t,x)\big\|_p \leq & \big\| u(r,h)-u(r,0)\|_{2p}\|v^{(r,z)}(t,x)\|_{2p}\nonumber\\
& +
\|u(r,z + h)\|_{2p}\big\| v^{(r,h)}(t,x-z)-v^{(r,0)}(t,x - z) \big\|_{2p}.
\end{align}
Hence, Proposition \ref{prop_incre}\ref{prop_incre-a}) is a consequence of Lemmas \ref{Lemma-u}, \ref{Lemma-difference of u}, \ref{Lemma-v}, and \ref{Lemma-difference of v}\ref{lmm-diff-vb}).

\medskip

b) Using \eqref{eq_deltahcs}, we can write
\begin{align*}
	\sup_{\substack{0\leq r \leq t\\ x' \in \R}} \int_{\R^3}   \big\|\Delta_{h}(r,z,t,x)\big\|_p \big\|\Delta_{h}(r,z,t,x')\big\|_p |h|^{2H - 2} d h dz dx = K_1 + K_2 + K_3 + K_4,
\end{align*}
where
\begin{align*}
	K_1 \coloneqq \sup_{\substack{0\leq r \leq t\\ x' \in \R}} \int_{\R^3} &  \big\| u(r,h)-u(r,0)\|_{2p}^2\|v^{(r,z)}(t,x)\|_{2p} \|v^{(r,z)}(t,x')\|_{2p} |h|^{2H - 2} d h dz dx,\\
	K_2 \coloneqq \sup_{\substack{0\leq r \leq t\\ x' \in \R}} \int_{\R^3} &  \big\| u(r,h)-u(r,0)\|_{2p}\|v^{(r,z)}(t,x)\|_{2p}\\
	& \times  \|u(r,z + h)\|_{2p} \big\| v^{(r,h)}(t,x'-z)-v^{(r,0)}(t,x' - z) \big\|_{2p} |h|^{2H - 2} d h dz dx,\\
	K_3\coloneqq \sup_{\substack{0\leq r \leq t\\ x' \in \R}} \int_{\R^3} & \|u(r,z + h)\|_{2p} \big\| v^{(r,h)}(t,x-z)-v^{(r,0)}(t,x - z) \big\|_{2p}\\
	& \times    \big\| u(r,h)-u(r,0)\|_{2p}\|v^{(r,z)}(t,x')\|_{2p} |h|^{2H - 2} d h dz dx,
	\shortintertext{and}
	K_4\coloneqq \sup_{\substack{0\leq r \leq t\\ x' \in \R}} \int_{\R^3} & \|u(r,z + h)\|_{2p}^2 \big\| v^{(r,h)}(t,x-z)-v^{(r,0)}(t,x - z) \big\|_{2p}\\
	& \times  \big\| v^{(r,h)}(t,x' - z)-v^{(r,0)}(t,x' - z) \big\|_{2p} |h|^{2H - 2} dh dz dx.
\end{align*}
It follows from Lemmas \ref{Lemma-difference of u}
and \ref{Lemma-v} that
\begin{align*}
	K_1 \leq & \sup_{0\leq r \leq t}\int_{\R}   \big\| u(r,h)-u(r,0)\|_{2p}^2 |h|^{2H - 2} dh \times \sup_{\substack{0\leq r\leq t\\ z \in \R}}\int_{\R} \|v^{(r,z)}(t,x)\|_{2p} dx \\
	&\times \sup_{\substack{0\leq r\leq t\\ x' \in \R}} \int_{\R} \|v^{(r,z)}(t,x')\|_{2p} dz \leq C.
\end{align*}

\noindent Due to Cauchy-Schwarz's inequality and Lemmas \ref{Lemma-u}, \ref{Lemma-difference of u}, \ref{Lemma-v}, and \ref{Lemma-difference of v}\ref{lmm-diff-va}), we have
\begin{align*}
K_2 \leq  & \sup_{\substack{0\leq r\leq t\\ z' \in \R}}\|u(r,z')\|_{2p} \times \sup_{\substack{0\leq r\leq t\\ z \in \R}} \int_{\R}\|v^{(r,z)}(t,x)\|_{2p}dx\\
 & \times \sup_{0\leq r\leq t} \bigg(\int_{\R} \big\| u(r,h)-u(r,0)\|_{2p}^2 |h|^{2H - 2} dh\bigg)^{1/2} \\
	& \times \sup_{0\leq r\leq t}  \bigg(\int_{\R}\Big( \int_{\R} \big\| v^{(r,h)}(t,z'')-v^{(r,0)}(t,z'') dz'' \Big)^2  \big\|_{2p} |h|^{2H - 2} d h  \bigg)^{1/2} \leq C,
\end{align*}
and, with a change of variable $x - z \to x''$,
\begin{align*}
	K_3 \leq  & \sup_{\substack{0\leq r\leq t\\ z' \in \R}}\|u(r,z')\|_{2p} \times \sup_{\substack{0\leq r\leq t\\ x' \in \R}} \int_{\R}\|v^{(r,z)}(t,x')\|_{2p}dz  \\
	&\times \sup_{0\leq r \leq t}\bigg(\int_{\R} \big\| u(r,h)-u(r,0)\|_{2p}^2 |h|^{2H - 2} dh\bigg)^{1/2} \\
	& \times \sup_{0\leq r \leq t} \bigg(\int_{\R}\Big( \int_{\R} \big\| v^{(r,h)}(t,x'')-v^{(r,0)}(t,x'') d x'' \Big)^2  \big\|_{2p} |h|^{2H - 2} d h  \bigg)^{1/2} \leq C.
\end{align*}

\noindent Finally, preforming a change of variables $(x - z, x' - z) \to (x'' , z'')$, and using Cauchy-Schwarz's inequality and Lemmas \ref{Lemma-u} and \ref{Lemma-difference of v}\ref{lmm-diff-va}),
\begin{align*}
	K_4\leq & \sup_{\substack{0\leq r\leq t\\ z' \in \R}}\|u(r,z')\|_{2p}^2 \times \sup_{0\leq r \leq t} \bigg(\int_{\R} \Big( \big\| v^{(r,h)}(t,z'') - v^{(r,0)}(t,z'') \big\|_{2p}d z'' \Big)^2 |h|^{2H - 2} dh \bigg)^{1/2} \\
	& \times \sup_{0\leq r \leq t} \bigg(\int_{\R}\Big(\int_{\R} \big\| v^{(r,h)}(t,x'')-v^{(r,0)}(t,x'') \big\|_{2p} dx'' \Big)^2 |h|^{2H - 2} dh \bigg)^{1/2} \leq C.
\end{align*}
The proof of Proposition \ref{prop_incre}\ref{prof_incre-d}) is complete.

\medskip

c) Without loss of generality, assume that $r \leq s$.
Then, as a result of Lemmas \ref{lmm-dprod}\ref{D2-prod}), \ref{trans-u}, \ref{lmm_trans-v}\ref{trans-v1'}) and \ref{lmm_trans-v}\ref{trans-v2}), and H\"{o}lder's inequality, we deduce that
\begin{align}\label{square_1-4}
\big\| \square_{h,\hbar}(r, z, s, y, t, x) \big\|_p \leq I_1 + I_2 + I_3 + I_4
\end{align}
where
\begin{gather*}
I_1 \coloneqq  \| u(r, h) - u(r,0)\|_{3p} \big\|v^{(r,0)}(s,y + \hbar - z) - v^{(r,0)}(s,y - z)\big\|_{3p} \|v^{(s,y)}(t,x)\|_{3p},\\
\begin{aligned}
I_2 \coloneqq \|u(r,z + h)\|_{3p} \big\| & v^{(r,h)}(s,y + \hbar - z) - v^{(r,h)}(s,y - z) \\
& - v^{(r,0)}(s,y + \hbar - z) + v^{(r,0)}(s,y - z)\big\|_{3p} \|v^{(s,y)}(t,x)\|_{3p},
\end{aligned}\\
I_3 \coloneqq   \|u(r, h) - u(r,0)\|_{3p} \big\| v^{(r,z)}(s,y + \hbar)\|_{3p}  \big\| v^{(s,\hbar)}(t,x - y) - v^{(s, 0)}(t,x - y)\big\|_{3p},
\shortintertext{and}
\begin{aligned}
I_4 \coloneqq & \|u(r,z + h)\|_{3p} \big \|v^{(r,z)}(s,y + \hbar) - v^{(r,z + h)}(s, y + \hbar)\big\|_{3p}\\
& \times \big\| v^{(s,\hbar)}(t,x - y) - v^{(s,0)}(t,x - y)\big\|_{3p}.
\end{aligned}
\end{gather*}
Then, preforming a change of variable $(z - y, y) \to (\widetilde{z}, y)$, and applying Lemmas \ref{Lemma-difference of u}, \ref{Lemma-v} and \ref{Lemma-difference of v}\ref{lmm-diff-vb}), we get
\begin{align*}
	& \int_{\R^2} \sup_{y \in \R}\Big(\int_{\R^2}  I_1 dx dz \Big)^2 |\hbar|^{2H - 2} |h|^{2H - 2} dh d \hbar\\
	= &\int_{\R} \Big(\int_{\R}  \big\|v^{(r,0)}(s, \hbar - \widetilde{z}) - v^{(r,0)}(s,\widetilde{z})\big\|_{3p} d \widetilde{z}\Big)^2   \sup_{y \in \R}\Big(\int_{\R}\|v^{(s,y)} (t,x)\|_{3p} dx  \Big)^2 |\hbar|^{2H - 2} d \hbar\\
	 & \times  \int_{\R} \| u(r, h) - u(r,0)\|_{3p}^2 |h|^{2H - 2} dh  \leq C,
\end{align*}
with $C > 0$ depending on $(t,p,H)$. Similarly, one can show that
\begin{align*}
	\int_{\R^2} \sup_{y \in \R}\Big(\int_{\R^2}  I_k dx dz \Big)^2 |\hbar|^{2H - 2} |h|^{2H - 2} dh d \hbar < C \quad \text{for all } k = 2,3,4.
\end{align*}
This completes that proof of Proposition \ref{prop_incre}\ref{prop_incre-b}).

\medskip

d) Similarly as in the proof of Proposition \ref{prop_incre}\ref{prop_incre-b}), we decompose $\|\square_{h,\hbar} (r,z,s,y,t,x)\|_p$ by \eqref{square_1-4}. Then, one can deduce that
\begin{align*}
	\int_{\R^2} \Big( & \int_{\R^2}  I_1 dy dz \Big)^2  |\hbar|^{2H - 2} |h|^{2H - 2} dh d \hbar = \int_{\R} \| u(r, h) - u(r,0)\|_{3p}^2|h|^{2H - 2} dh\\
	&\times \int_{\R}\Big(\int_{\R}   \big\|v^{(r,0)}(s,\widetilde{z} + \hbar) - v^{(r,0)}(s,\widetilde{z} ) \big\|_{3p} d\widetilde{z}\Big)^2 \Big(\int_{\R} \|v^{(s,y)}(t,x)\|_{3p} dy \Big)^2 |\hbar|^{2H - 2}  d \hbar < C.
\end{align*}
Again, similar arguments can be applied to the integrations of $I_2$, $I_3$ and $I_4$, and we skip them for conciseness. The proof of this proposition is complete.
\end{proof}

\section{Proofs of the main results}
\label{section-main}

In this section, we present the proof of Theorems \ref{EGD}, \ref{limit-cov}, \ref{QCLT} and \ref{FCLT}.

\subsection{Spatial Ergodicity---Proof of Theorem \ref{EGD}}
\label{prf_egd}

In this section, we include the proof of Theorem \ref{EGD}.
Recall that the stationarity of $\{u(t,x)\}_{x \in \bR}$ was proved in
Lemma \ref{trans-u}. In this section, we prove that this process is also ergodic. For this, we use a version of the ergodicity criterion given by \cite[Lemma 7.2]{CKN+09}, as stated in \cite[Lemma 4.2]{BZ23}. More precisely, we prove that:
\[
\lim_{R \to \infty}\frac{1}{R^2} {\rm Var}(U_R) =0,
\quad
\mbox{where} \quad
U_R=\int_{-R}^{R}g\Big(\sum_{j=1}^{k}b_j u(t,x+\zeta_j)\Big)dx,
\]
where $k$ is an arbitrary positive integer, $b_1,\ldots,b_k \in \bR$ and $\zeta_1,\ldots,\zeta_k \in \bR$ are arbitrary and $g \in \{\cos,\sin\}$.

Without loss of generality, we assume that $g(x)=\cos x$, the case when $g(x)=\sin x$ being similar. By the Gaussian-Poincar\'e inequality (see e.g, \cite[Exerice 2.11.1]{NP12}),
\[
{\rm Var}(U_R) \leq \bE\|DU_R \|_{\cH}^2=\int_0^t \int_{\bR^2}\bE|D_{r,y}U_R-D_{r,z}U_R|^2 |y-z|^{2H-2} dydz dr.
\]
Using the chain rule $D \varphi(F)=\varphi'(F)DF$, we see that
\[
D_{r,y}U_R=\int_{-R}^R \sin \Big( \sum_{j=1}^{k} b_j u(s,x+\zeta_j) \Big ) \sum_{j=1}^{k} b_j D_{r,y} u(t,x+\zeta_j)dx.
\]
Using Minkowski inequality, Cauchy-Schwarz inequality and the fact that $|\sin(x)|\leq 1$,
\begin{align*}
& \bE|D_{r,y}U_R-D_{r,z}U_R|^2 =\|D_{r,y}U_R-D_{r,z}U_R\|_2^2 \\
& \quad \leq
\left(\int_{-R}^{R} \left\|\sin\left(\sum_{j=1}^{k}b_j u(t,x+\zeta_j) \right) \sum_{j=1}^{k}b_j \left(D_{r,y}u(t,x+\zeta_j)-
D_{r,z}u(t,x+\zeta_j) \right) \right\|_2 dx \right)^2 \\
& \quad \leq
\left(\int_{-R}^{R} \sum_{j=1}^{k}|b_j| \big\|D_{r,y}u(t,x+\zeta_j)-
D_{r,z}u(t,x+\zeta_j)\big\|_4  dx \right)^2,
\end{align*}
and therefore,
\[
{\rm Var}(U_R) \leq C \sum_{j = 1}^k b_j^2 \times \sup_{\zeta\in \R} \int_0^t \int_{\bR^2}
\left(\int_{-R}^{R}  \big\| \Delta_h(r,z,t,x + \zeta) \big\|_4  dx \right)^2 |h|^{2H-2} dh dz dr,
\]
where $\Delta$ is defined as in \eqref{def_delta}. Notice that Proposition \ref{prop_incre}\ref{prof_incre-d}) yields that
\begin{align*}
& \sup_{\zeta\in \R} \int_{\bR^2} \left(\int_{-R}^{R}  \big\| \Delta_h(r,z,t,x + \zeta) \big\|_4  dx \right)^2 |h|^{2H-2} dh dz \\
&= \sup_{\zeta\in \R} \int_{\bR^2}\left( \int_{-R}^{R} \int_{-R}^{R}
\big\|\Delta_h(r,z,t,x_1 + \zeta)\big\|_4 \big\|\Delta_h(r,z,t,x_2 + \zeta)\big\|_4 dx_1 dx_2\right) |h|^{2H-2}dhdz \\
&\leq \sup_{\zeta\in \R} \int_{-R}^{R} \left(\int_{\bR^3}
\big\|\Delta_h(r,z,t,x_1)\big\|_4 \big\|\Delta_h(r,z,t,x_2 + \zeta)\big\|_4 |h|^{2H-2}dhdzdx_1 \right)dx_2  \\
& \leq  \int_{-R}^R \left(\sup_{y \in \R }  \int_{\bR^3} \big\| \Delta_h(r,z,t, x_1) \big\|_4  \big\| \Delta_h(r,z,t, y) \big\|_4 |h|^{2H-2} dh dz dx_1 \right)  d x_2 \leq C R,
\end{align*}
and thus
\begin{align*}
	\mathrm{Var} (U_R) \leq C R.
\end{align*}
It follows that
\begin{align*}
	\frac{1}{R^2} \mathrm{Var} (U_R) \leq C R^{-1} \to 0, \quad \text{as }R\to\infty.
\end{align*}
The proof of Theorem \ref{EGD} is complete.

\subsection{Limiting covariance---Proof of Theorem \ref{limit-cov}}
\label{sec_limit_cov}

In this section, we give the proof of Theorem \ref{limit-cov}.
Note that
\[
\sigma_R^2(t)={\rm Var}(F_R(t))=\int_{-R}^{R} \int_{-R}^R \rho_t(x-y)dxdy
\]
where
\begin{equation}
\label{def-rho}
\rho_{t}(x-y) \coloneqq \bE \left[ \big(u(t,x)-1\big) \big(u(t,y)-1\big) \right]
	= \sum_{n\geq 1}\frac{1}{n!}\gamma_n(t,x-y)
\end{equation}
and
\begin{align}
\label{eq-inner}
\nonumber
& \gamma_n(t,x-y) \coloneqq (n!)^2 \langle \widetilde f_n(\cdot,t,x), \widetilde f_n(\cdot,t,y) \rangle_{\cH^{\otimes n}}\\
\nonumber
& \quad = n!C_H^n \int_{T_n(t)}\int_{\bR^n}
	\cF f_n(\cdot,t,x)(\xi_1,\ldots,\xi_n) \overline{\cF f_n(\cdot,t,y)(\xi_{1},\ldots,\xi_{n})}
	\prod_{j=1}^{n}|\xi_j|^{1-2H} d\pmb{\xi}_n d\pmb{t}_n\\
\nonumber
& \quad = n! C_H^n \int_{T_n(t)}\int_{\bR^n}
	e^{-i(\xi_1+\ldots+\xi_n) (x-y)} \prod_{j=1}^n |\cF G_{t_{j+1}-t_j}(\xi_1+\ldots+\xi_j)|^2
\prod_{j=1}^{n}|\xi_j|^{1-2H} d\pmb{\xi}_n d\pmb{t}_n \\
& \quad =n! C_H^n \int_{T_n(t)}\int_{\bR^n}
	e^{-i\eta_n (x-y)} \prod_{j=1}^n \frac{\sin^2((t_{j+1}-t_j)|\eta_j|)}{|\eta_j|^2}
\prod_{j=1}^{n}|\eta_j-\eta_{j-1}|^{1-2H} d\pmb{\eta}_n d\pmb{t}_n,
\end{align}
with convention $t_{n+1}=t$ and $\eta_0=0$.
This shows that
$\alpha_n(t,x-y)$ and $\rho_{t}(x-y)$ depend on $x$ and $y$ only through the difference $x-y$.
In particular, $\{u(t,x);x \in \bR\}$ is a (wide-sense) stationary process with covariance function $\rho_t$.

\begin{proof}[Proof of Theorem \ref{limit-cov}]
The proof is divided in three steps.

{\bf Step 1.}
In this step, we prove \eqref{lim-cov} in the case $t=s$. We write
\begin{equation}
\label{eq-sigma}
\sigma_{R}^2(t)=\sum_{n\geq 1}\frac{1}{n!}\int_{-R}^{R}\int_{-R}^R \gamma_n(t,x-y)dxdy.
\end{equation}
Note that
\begin{equation}
\label{exp}
\int_{-R}^R \int_{-R}^R e^{-i \xi(x-y)}dxdy=\left|\int_{-R}^{R}e^{-i\xi x}dx\right|^2=\frac{4\sin^2(R|\xi|)}{|\xi|^2}=4\pi R\ell_R(\xi)
\end{equation}
where $\ell_R(x) \coloneqq (\pi R|x|^2)^{-1}\sin^2(|x|R)$ is an approximation of the identity as $R\to \infty$; see \cite[Lemma 2.1]{NZ20-1}. On the other hand, \cite[Theorem 3.2]{SSX20} shows that for every $n\geq 1$,
\begin{align*}
\int_{T_n(t)}\int_{\bR^n}
	\bigg| e^{-i\eta_n (x-y)} \prod_{j=1}^n \frac{\sin^2((t_{j+1}-t_j)|\eta_j|)}{|\eta_j|^2}
\prod_{j=1}^{n}|\eta_j-\eta_{j-1}|^{1-2H}\bigg| d\pmb{\eta}_n d\pmb{t}_n < \infty.
\end{align*}
Thus, by Fubini theorem,
\begin{align}\label{gmm>0}
& \frac{1}{n!}\int_{-R}^R \int_{-R}^R \gamma_n(t,x-y)dxdy\nonumber\\
= &  C_H^n \int_{T_n(t)} \int_{\bR^n}
\left(\int_{-R}^R \int_{-R}^R
e^{-i\eta_n(x-y)}dxdy\right)
\prod_{j=1}^{n}
\frac{\sin^2((t_{j+1}-t_j)|\eta_j|)}{|\eta_j|^2}
\prod_{j=1}^n |\eta_j-\eta_{j-1}|^{1-2H}d \pmb{\eta}_n d\pmb{t}_n \nonumber\\
 = & 4C_H^n \int_{T_n(t)} \int_{\bR^n}
\frac{\sin^2 (R|\eta_n|)}{|\eta_n|^2}
\prod_{j=1}^{n}
\frac{\sin^2((t_{j+1}-t_j)|\eta_j|)}{|\eta_j|^2}
\prod_{j=1}^n |\eta_j-\eta_{j-1}|^{1-2H}d \pmb{\eta}_n d\pmb{t}_n.
\end{align}

We treat separately the case  $n = 1$.
From \cite[Page 30]{BY23-2}, we know that for any $t_1 \in [0,t]$, $\theta \in (H,\frac{1}{2})$ and $\varepsilon \in (0,\frac{\pi}{4t})$,
\[
\int_{\bR} \frac{\sin^2(R|\eta_1|)}{|\eta_1|^2}
\frac{\sin^2((t-t_1)|\eta_1|)}{|\eta_1|^2} |\eta_1|^{1-2H}d\eta_1
\leq C_{\varepsilon,\theta,H} (t-t_1)^2 R^{2\theta} +C_{\varepsilon,H}.
\]
Therefore,
\[
\lim_{R \to \infty}\frac{1}{R}\int_{-R}^{R} \int_{-R}^{R}\gamma_1(t,x-y)dxdy \leq 4C_H \lim_{R \to \infty} R^{2\theta - 1}\int_0^t \big( C_{\varepsilon,\theta,H} (t-t_1)^2 +C_{\varepsilon,H}\big) dt_1 =0.
\]
Next, we examine the terms corresponding to $n\geq 2$.
For any $n\geq 2$, denote
\[
g_{\pmb{t}_n}^{(n)}(\eta) \coloneqq \frac{\sin^2((t-t_n)|\eta|)}{|\eta|^2}
\int_{\bR^{n-1}} \prod_{j=1}^{n-1}\frac{\sin^2((t_{j+1}-t_j)|\eta_j|)}{|\eta_j|^2} \prod_{j=1}^{n}|\eta_j-\eta_{j-1}|^{1-2H} d\pmb{\eta}_{n-1},
\]
with
\begin{align*}
g_{\pmb{t}_n}^{(n)}(0) \coloneqq & \lim_{\eta \to 0} g_{\pmb{t}_n}^{(n)}(\eta)\\
 = & (t-t_n)^2
\int_{\bR^{n-1}} \prod_{j=1}^{n-1}\frac{\sin^2((t_{j+1}-t_j)|\eta_j|)}{|\eta_j|^2} \prod_{j=1}^{n-1}|\eta_j-\eta_{j-1}|^{1-2H} |\eta_{n-1}|^{1-2H}d\pmb{\eta}_{n-1},
\end{align*}
where the last equality is due to the fact that $\lim_{x \to 0}\sin x/x=1$.
Then, $g_{\cdot}^{(n)}(*)$ is a non-negative function on $T_n(t)\times \bR$, and thus
\begin{align}
\label{alpha_1}
\frac{1}{n!}\int_{-R}^{R} & \int_{-R}^R \gamma_n(t,x-y)dxdy\nonumber \\
= &4\pi C_H^n R \int_{T_n(t)}\int_{\bR^n} \ell_R(\eta_n) \prod_{j=1}^{n}\frac{\sin^2((t_{j+1}-t_j)|\eta_j|)}{|\eta_j|^2} \prod_{j=1}^{n}|\eta_j-\eta_{j-1}|^{1-2H} d\pmb{\eta}_n d\pmb{t}_n \nonumber\\
= &4\pi C_H^n R \int_{T_n(t)} \int_{\bR}\ell_R(\eta_n) g_{\pmb{t}_n}^{(n)}(\eta_n)d\eta_n d\pmb{t}_n =
4\pi C_H^n R \int_{T_n(t)} \big(\ell_R*g_{\pmb{t}_n}^{(n)}\big)(0) d\pmb{t}_n.
\end{align}

\noindent Using \eqref{prod}, we obtain that
\begin{align}\label{ine_gtn}
g_{\pmb{t}_n}^{(n)}(\eta_n)&\leq\sum_{\pmb{\alpha}_n \in D_n}
\frac{\sin^2((t-t_n)|\eta_n|)}{|\eta_n|^2}|\eta_n|^{\alpha_n}\prod_{j=1}^{n-1} \left( \int_{\bR} \frac{\sin^2((t_{j+1}-t_j)|\eta_j|)}{|\eta_j|^2} |\eta_j|^{\alpha_j} d\eta_j\right)\nonumber\\
&\leq C^{n-1} \sum_{\pmb{\alpha}_n\in D_n}
\frac{\sin^2((t-t_n)|\eta_n|)}{|\eta_n|^2}|\eta_n|^{\alpha_n}  \prod_{j=1}^{n-1}(t_{j+1}-t_j)^{1-\alpha_j} ,
\end{align}
where for the last inequality we used \eqref{sin-int}. It is not difficult to see that $g_{\pmb{t}_n}^{(n)}$ is continuous, locally bounded and integrable on $\bR$.
Recall that $\ell_R$ is an approximation of the identity as $R\to \infty$.
Hence, for any $n\geq 2$,
\[
\lim_{R\to \infty}\big(\ell_R*g_{\pmb{t}_n}^{(n)}\big)(0) = g_{\pmb{t}_n}^{(n)}(0).
\]

\noindent  Combining \eqref{eq-sigma} and \eqref{alpha_1}, by using the dominated convergence theorem, we see that
\begin{align}\label{limit_r}
\lim_{R\to\infty}\frac{\sigma_R^2(t)}{R}=
\lim_{R\to\infty} 4\pi\sum_{n\geq 2}  C_H^n \int_{T_n(t)} \big(\ell_R*g_{\pmb{t}_n}^{(n)}\big)(0) d\pmb{t}_n = 4\pi\sum_{n\geq 2}C_H^n\int_{T_n(t)} g_{\pmb{t}_n}^{(n)}(0) d\pmb{t}_n.
\end{align}

As the dominated convergence theorem is applied in \eqref{limit_r}, one needs to justify the applicability. In other words, we have to find a sequence of functions $\{h_n\}_{n\geq 2}$ on $T_n(t)$ such that $\big(\ell_R*g_{\pmb{t}_n}^{(n)}\big)(0)\leq h_n(\pmb{t}_n)$ for all $\pmb{t}_n \in T_n(t)$ and $n\geq 1$, and
\begin{equation}
\label{series-h}
\sum_{n\geq 2}C_H^n\int_{T_n(t)} h_n(\pmb{t}_n)d\pmb{t}_n <\infty.
\end{equation}
In particular, this shows that
\[
K(t) = 4\pi\sum_{n\geq 2}C_H^n\int_{T_n(t)} g_{\pmb{t}_n}^{(n)}(0) d\pmb{t}_n < \infty.
\]

\noindent Thanks to \eqref{ine_gtn} and the inequality $\frac{\sin^2((t-t_n)|\eta|)}{|\eta|^2}\leq (t-t_n)^2$, we have:
\begin{align*}
\big(\ell_R*g_{\pmb{t}_n}^{(n)}\big)(0)& \leq C^{n-1}\sum_{\pmb{\alpha}_n\in D_n}\prod_{j=1}^{n-1}(t_{j+1}-t_j)^{\alpha_j}
\int_{\bR}\frac{\sin^2(R|\eta_n|)}{\pi R|\eta|^2}\frac{\sin^2((t-t_n)|\eta_n|)}{|\eta_n|^2}|\eta_n|^{\alpha_n}d\eta_n \\
&\leq  C^{n-1}\sum_{\pmb{\alpha}_n \in D_n}\prod_{j=1}^{n-1}(t_{j+1}-t_j)^{\alpha_j}(t-t_n)^2
\int_{\bR}\frac{\sin^2(R|\eta_n|)}{\pi R|\eta_n|^2}|\eta_n|^{\alpha_n}d\eta_n \\
&\leq  C^{n}\sum_{\pmb{\alpha}_n \in D_n} \prod_{j=1}^{n-1}(t_{j+1}-t_j)^{1-\alpha_j}(t-t_n)^2
 R^{-\alpha_n}\\
& \leq C^n\sum_{\pmb{\alpha}_n \in D_n}\prod_{j=1}^{n-1}(t_{j+1}-t_j)^{1-\alpha_j}
(t-t_n)^2 \eqqcolon h_n(\pmb{t}_n),
\end{align*}
for any $R\geq 1$, where identity \eqref{sin-int} is used for the second last line.
As a consequence of identity \eqref{beta-int} and the fact that $\Gamma(an+b+1) \geq C^n (n!)^a$ for any $a>0,b\in \bR$, we get:
\begin{align*}
\int_{T_n(t)}h_n(\pmb{t}_n) d\pmb{t}_n \leq \frac{C^n}{(n!)^{2H+1}}(t^{(1+2H)n+1}+t^{(1+2H)n+2-2H}),
\end{align*}
which yields \eqref{series-h}.

\medskip

{\bf Step 2.} In this step, we show that
\[
\lim_{R\to \infty}\frac{\sigma_R^2(t)}{R}>0.
\]
Recall \eqref{gmm>0}, we have
\[
\int_{-R}^R \int_{-R}^R\gamma_n(t,x-y)dxdy > 0,
\]
for all $n \geq 2$, $R > 0$ and $ t > 0$.
Then, taking account of \eqref{alpha_1}, we can write
\begin{align*}
\lim_{R\to \infty}\frac{\sigma_R^2(t)}{R} > & \lim_{R\to \infty}\frac{1}{2R}\int_{-R}^R \int_{-R}^R \gamma_2(t,x-y)dxdy
=  4\pi C_H^2 \int_{0<r<s<t} g^{(2)}_{r,s}(0)drds\\
= & 4\pi C_H^2\int_{0<r<s<t}
\int_{\bR}\frac{\sin^2((s-r)\eta_1)}{|\eta_1|^{2}}|\eta_1|^{2(1-2H)}d\eta_1 drds\\
= & C \int_{0<r<s<t} (s-r)^{4H-1}drds>0 ,
\end{align*}
 where $C>0$ is a constant depending on $H$, and the last line is due to \eqref{sin-int} and the fact that $H>1/4$.

\medskip

{\bf Step 3.} In this step, we complete the proof of \eqref{lim-cov}. Without loss of generality, assume that $s \leq t$. Similarly to \eqref{eq-inner},
\[
\bE[F_R(t) F_R(s)]=\sum_{n\geq 1}\frac{1}{n!} \int_{-R}^R \int_{-R}^R \widehat{\gamma}_{n}(t,s,x-y)dxdy,
\]
where
\begin{align*}
\widehat{\gamma}_n(t,s,x-y)&=(n!)^2 \langle \widetilde{f}_n(\cdot,t,x),
\widetilde{f}_n(\cdot,s,y)\rangle_{\cH^{\otimes n}}\\
&=n! C_H^n \int_{T_n(s)} \int_{\bR^{n}} e^{-i \eta_n (x-y)} \prod_{j=1}^{n-1}\frac{\sin^2((t_{j+1}-t_j)|\eta_j|)}{|\eta_j|^2} \\
& \quad \quad \quad \quad \quad \frac{\sin((t-t_n)|\eta_n|) \sin((s-t_n)|\eta_n|)}{|\eta_n|^2} \prod_{j=1}^{n}
|\eta_j-\eta_{j-1}|^{1-2H}d\pmb{\eta}_n d\pmb{t}_n.
\end{align*}
The same argument as for \eqref{alpha_1} shows that
\begin{align*}
\frac{1}{n!}\int_{-R}^{R} \int_{-R}^R \widehat{\gamma}_n(t,s,x-y)dxdy=
4\pi C_H^n R \int_{T_n(s)} \big(\ell_R*\widehat{g}_{\pmb{t}_n}^{(n)}\big)(0) d\pmb{t}_n,
\end{align*}
where
\begin{align*}
\widehat{g}_{\pmb{t}_n}^{(n)}(\eta) \coloneqq & \frac{\sin((t-t_n)|\eta|)\sin((s-t_n)|\eta|)}{|\eta|^2}
\\
& \times \int_{\bR^{n-1}} \prod_{j=1}^{n-1}\frac{\sin^2((t_{j+1}-t_j)|\eta_j|)}{|\eta_j|^2}  \prod_{j=1}^{n}|\eta_j-\eta_{j-1}|^{1-2H} d\pmb{\eta}_{n-1}.
\end{align*}
Then, relation \eqref{lim-cov} follows by the dominated convergence theorem, as in Step 1. The proof of this theorem is complete.
\end{proof}

\begin{remark}
There might be an alternative method for proving Theorem \ref{limit-cov}, which would give a different representation of the limiting covariance. We explain this method here. If
\begin{equation}
\label{rho-int}
\int_{\bR}|\rho_t(x)|dx<\infty,
\end{equation}
then by the dominated convergence theorem
\[
\frac{\sigma_{R}^2(t)}{R}=\int_{\bR}\rho_t(x) \frac{|B_R \cap B_R(-x)| }{R}dx \to 2 \int_{\bR}\rho_t(x)dx \quad \mbox{as} \quad R \to \infty
\]
where $B_R=[-R,R]$ (see \cite[Page 27]{NZ20-1}). Recalling definition \eqref{def-rho} of $\rho_t(x)$, \eqref{rho-int} follows if one can show that
\[
\sum_{n\geq 1}\frac{1}{n!} \int_{\bR}|\gamma_n(t,x)|dx<\infty.
\]
Unfortunately, we could not prove that $\gamma_n(t,\cdot)$ is integrable on $\bR$.
\end{remark}

\begin{remark}
Suppose we can exchange the order of integrals arbitrarily and treat the Diract distribution $\delta$ as a regular function. Then, we can write
\begin{align*}
4\pi C_H^n\int_{T_n(t)}g_{\pmb{t}_n}^{(n)}(0)d\pmb{t}_n =&4\pi C_H^n\int_{T_n(t)}\int_{\bR}\delta_0(\eta_n)g_{\pmb{t}_n}^{(n)}(\eta_n)d\eta_nd\pmb{t}_n\\
=&2C_H^n\int_{T_n(t)}\int_{\bR}\Big(\int_{\bR}e^{-i\eta_n x}\mathbf{1}_{\bR}(x)dx\Big) g_{\pmb{t}_n}^{(n)}(\eta_n)d\eta_nd\pmb{t}_n\\
=&\frac{2}{n!}\int_{\bR}\langle f_n(\cdot, t,x), f_n(\cdot, t,0)\rangle_{\cH}dx.
\end{align*}
Thus it is natural to conjecture that
\begin{align}\label{cj_limit}
\lim_{R\to \infty}\frac{1}{R}\sigma_R^2=2\int_{\bR}\mathrm{Cov}(u(t,x),u(t,0))dx.
\end{align}
Actually, this has been confirmed in parabolic cases (see \cite{NSZ20,NZ20-1}) and also in hyperbolic cases assuming the non-negative correlation in space of driven noises (see \cite{BNQZ22}). The aforementioned results rely on the nonnegativity of $\gamma_n(t,x)$, which does not hold in our setting. This prevents us to provide a proof for \eqref{cj_limit}. We expect it can be verified in the future.
\end{remark}

\subsection{Quantitative CLT---Proof of Theorem \ref{QCLT}}
\label{proof_qclt}

In this section, we prove Theorem \ref{QCLT}.
Applying \cite[Proposition 2.4]{NXZ22},
we get:
\[
d_{TV}\left(\frac{F_R(t)}{\sigma_R(t)},Z \right) \leq \frac{2\sqrt{3}}{\sigma_R^2(t)} \sqrt{\cA},
\]
where
\[
\cA=C_H^3 \int_{[0,t]^3}\cA_0^*(r,s,\theta)drdsd\theta,
\]
and
\begin{align*}
 \cA_0^*(r,s,&\theta) = \int_{\bR^6} \|D_{r,z}F_R(t)- D_{r,z'}F_R(t)\|_4  \|D_{\theta,w}F_R(t)-D_{\theta,w'}F_R(t)\|_4 \\
& \times \|
D_{(r,z) ,(s,y)}^2  F_R(t)-
D_{(r,z) ,(s,y')}^2 F_R(t)-
D_{(r,z'),(s,y)}^2  F_R(t)+
D_{(r,z'),(s,y')}^2 F_R(t) \|_4 \\
&  \times \|
D_{(\theta,w) ,(s,y)}^2  F_R(t)-
D_{(\theta,w) ,(s,y')}^2 F_R(t)-
D_{(\theta,w'),(s,y)}^2  F_R(t)+
D_{(\theta,w'),(s,y')}^2 F_R(t) \|_4  \\
&  \times |y-y'|^{2H-2}|z-z'|^{2H-2}|w-w'|^{2H-2}dydy' dzdz' dw dw'.
\end{align*}
Since Theorem \ref{limit-cov} concludes that $\sigma_R^2(t) \sim C R$ as $R \to \infty$, it is enough to show that $\cA \leq C R$.

By Minkowski's inequality,
we get $\cA_0^*(r,s,\theta) \leq \cA_0(r,s,\theta)$, where $\cA_0$ can be written, after a change of variables, as follows:
\begin{align*}
\cA_0&(r,s,\theta) =  \int_{[-R,R]^4} dx_1 dx_2 dx_3 dx_4  \int_{\bR^6} dydy' dzdz' dw dw' dw' |y'|^{2H-2}|z'|^{2H-2}|w'|^{2H-2}\\
 & \times \big\|\Delta_{z'}(r,z,t,x_1)\big\|_4  \big\|\Delta_{w'} (\theta,w,t,x_2) \big\|_4 \big\|
\square_{z',y'}(r,z,s,y,t,x_3)\big\|_4  \big\|
\square_{w',y'} (\theta, w, s, y, t, x_4) \big\|_4.
\end{align*}
where $\Delta$ and $\square$ are defined as in \eqref{def_delta} and \eqref{def_rectg}, respectively.

Hence,
\begin{align*}
\cA \leq \cA_1 + \cA_2 + \cA_3 + \cA_4,
\end{align*}
where
\begin{gather*}
\cA_1 \coloneqq \int_{0 <r\vee \theta < s < t} \cA_0 (r,s,\theta) drdsd \theta, \qquad \cA_2 \coloneqq \int_{0 < r < s < \theta < t} \cA_0 (r,s,\theta) dr ds d \theta,\\
\cA_3\coloneqq \int_{0<\theta<s<r <t} \cA_0 (r,s,\theta) drdsd \theta, \quad \text{and}\quad \cA_4 \coloneqq \int_{0 < s < r\wedge \theta < t} \cA_0 (r,s,\theta) drdsd \theta.
\end{gather*}

The estimates for $\cA_1,\dots, \cA_4$ are quite similar. Here we only provide a detailed deduction of the estimate for $\cA_1$.
\begin{align}\label{ineq-Cauchy}
\cA_0(r,s,\theta) \leq T_1 (r,\theta)^{\frac{1}{2}} T_2 (s,r)^{\frac{1}{2}} \int_{-R}^R T_3(s,\theta, x_4)^{\frac{1}{2}} dx_4
\end{align}
where
\begin{gather*}
\begin{aligned}
T_1(r, \theta)\coloneqq \int_{\bR^2} & \Big(  \sup_{z \in \bR} \int_{\bR} \big\| \Delta_{z'}(r,z,t,x_1) \big\|_4 dx_1 \Big)^2 \Big(\sup_{w \in \bR} \int_{\bR} \big\| \Delta_{w'} (\theta, w, t, x_2) \big\| dx_2 \Big)^2 \\
&\times |z'|^{2H-2}|w'|^{2H-2} dz'dw' < C,
\end{aligned}\\
T_2(s,r)\coloneqq \int_{\bR^2} \left( \sup_{y \in \bR} \int_{\bR^2} \big\| \square_{z',y'}(r, z, s, y, t,x_3) \big\|_4 dx_3 dz \right)^2 |y'|^{2H-2}|z'|^{2H-2} dy'dz' < C,
\shortintertext{and}
T_3(s,\theta, x_4) \coloneqq \int_{\bR^2} \left(  \int_{\bR^2} \big\|\square_{w',y'}(\theta, w, s, y, y, x_4)\big\|_4 dydw \right)^2 |y'|^{2H-2}|w'|^{2H-2} dy'dw' < C,
\end{gather*}
by using Proposition \ref{prop_incre}. This yields immediately that
 for any $0 < r \vee \theta < s < t$,
\begin{align*}
	\cA_0(r,s,\theta) \leq C \int_{-R}^R dx_4 = C R,
\end{align*}
and thus
\begin{align*}
	\cA_1 =\int_{\{0 < r < s\vee \theta < s\}}\cA_0(r,s,\theta)drdsd\theta \leq C R.
\end{align*}
Similar arguments ensure that $\cA_k\leq CR$ for $k = 2,3,4$. Therefore, $\cA \leq CR$. This completes the proof of Theorem \ref{QCLT}.

\subsection{Functional CLT---Proof of Theorem \ref{FCLT}} \label{sec_FCLT}

In this section, we include the proof of Theorem \ref{FCLT}. We fix $T>0$. It suffices to show the following properties:

\begin{enumerate}[(1)]
\item \label{tight}
The tightness of the collection of $C([0,T])$-valued random variables $\{\sigma_R^{-1} F_R(r); r\in [0, T]\}_{R \in \R_+}$.

\item \label{finit} The convergence in distribution of $\{(\sigma_R^{-1} F_R(t_1),\dots,\sigma_R^{-1} F_R(t_m)\}_{R\in \R_+}$ to $(\cG(t_1),\ldots, \cG(t_m))$ as $R \to \infty$, for all positive integer $m$ and for all $0 \leq t_1 \leq \dots \leq t_m \leq T$.
\end{enumerate}

\medskip

\begin{proof}[Proof of tightness]
By Kolmogorov-Chentsov criterion (\cite[Theorem 23.7]{Kal21}), it is enough to prove that
\begin{align*}
\|F_R(t)-F_R(s)\|_p \leq C R^{1/2} (t-s)^{1/2},
\end{align*}
for all $0 \leq s \leq t \leq T$, where $C>0$ is a constant that depends on $(T, p , H)$. Using \eqref{def-sol} and the convention $G_t(x)=0$ for $t<0$, we write:
\[
u(t,x)-u(s,x) = \int_0^t \int_{\bR}\big(G_{t-r}(x-y)-G_{s-r}(x-y)\big) u(r,y)W(dr,dy).
\]
By stochastic Fubini theorem,
\[
F_R(t)-F_R(s) = \int_0^t \int_{\bR} \left( \int_{-R}^{R} \big(G_{t-r}(x-y)-G_{s-r}(x-y)\big) dx\right) u(r,y)W(dr,dy).
\]
We use the following inequality: for any predictable process $S$,
\[
\left\|\int_0^{T} \int_{\bR}S(t,x)W(dt,dx)\right\|_p^2 \leq C_{p,H} \int_0^{T}\int_{\bR^2}\|S(t,x)-S(t,y)\|_p^2 |x-y|^{2H-2}dxdy dt.
\]
This inequality follows from the Burhholder-Davis-Gundy inequality for the stochastic integral with respect to $W$ (given by \cite[Theorem 2.9]{BJQ15}) followed by Minkowski inequality for the $\|\cdot\|_{p/2}$-norm. It follows that
\[
\|F_R(t)-F_R(s)\|_p^2 \leq C_{p,H} \int_0^t \int_{\bR^2}\|S(r,y)-S(r,z)\|_p^2 |y-z|^{2H-2}dydz dr,
\]
with
$S(r,y) =\left(\int_{-R}^R \big(G_{t-r}(x-y)-G_{s-r}(x-y)\big) dx\right) u(r,y)$.
It follows that:
\begin{align*}
	\|F_R(t)-F_R(s)\|_p^2 \leq C_{p,H} \big( I_1 + I_2 \big),
\end{align*}
where
\begin{align*}
	I_1 & \coloneqq \int_0^t \int_{\bR^2} \left(\int_{-R}^R \big(G_{t-r}(x-y) - G_{s-r}(x-y) - G_{t-r}(x-z) + G_{s-r}(x-z) \big) dx\right)^2\\
	& \quad \quad \quad \times  \|u(r,y)\|_p^2 |y-z|^{2H-2}dydz dr \\
	I_2 & \coloneqq \int_0^t \int_{\bR^2} \left(\int_{-R}^R \big(G_{t-r}(x-z) - G_{s-r}(x-z) \big) dx\right)^2  \|u(r,y) - u(r,z)\|_p^2 |y-z|^{2H-2}dydzdr.
\end{align*}

Note that
\begin{align}
\label{int_t-s}
\int_{-R}^R \big(G_{t-r}(x-z) - G_{s-r}(x-z) \big)  dx  =\frac{1}{2}\int_{-R}^{R}1_{\{s-r<|x-z|<t-r\}}dx \in [0,t-s].
\end{align}
Thus, due to Lemmas \ref{trans-u} and \ref{Lemma-difference of u}, and the fact that $G_{s-r}(x) \leq G_{t-r}(x)$, we deduce that
\begin{align*}
	I_2\leq & C (t - s) \int_{-R}^R \int_0^t \left(\int_{\R} G_{t-r} (x-z)dz \right) dr dx =  C (t - s) R  \int_0^t  (t-r)   d r \leq C (t - s) R.
\end{align*}

Next, we treat $I_1$. By \eqref{int_t-s},
\begin{align*}
\left|\int_{-R}^{R} \big( G_{t-r}(x-y) - G_{s-r}(x-y) - G_{t-r}(x-z) +G_{s-r}(x-z) \big) dx\right| \leq 2 (t-s).
\end{align*}
Using this bound and Lemma \ref{Lemma-u}, we find that
\begin{align*}
	I_{1} & \leq 2(t-s) \int_{-R}^R   \int_0^t \int_{\R^2} \big(|G_{t-r}(x-y) - G_{t-r}(x-z)| + |G_{s-r}(x-y) - G_{s-r}(x-z)| \big) \\
	& \qquad \qquad \qquad \qquad \qquad \times |y-z|^{2H-2} dydz dr dx
\end{align*}
Note that $|G_{t}(x) - G_{t}(y)| \in \{1/2,0\}$ for all $t \geq 0$ and $x,y\in \R$, thus $|G_t(x) - G_t(y)| = 2 |G_t(x) - G_t(y)|^2$. As a result,
\begin{align*}
I_{1} & \leq 4 (t-s) \bigg( \int_{-R}^R  \int_0^t \int_{\R^2}  |G_{t-r}(x-y) - G_{t-r}(x-z)|^2 |y-z|^{2H-2} dydz drdx \\
 & + \int_{-R}^R \int_0^t \int_{\R^2} |G_{s-r}(x-y) - G_{s-r}(x-z)|^2 |y-z|^{2H-2}dydzdr dx\bigg) \leq  C (t-s) R,
\end{align*}
where the last inequality is due to \cite[Inequality (58)]{BY23-2}.
\end{proof}

\begin{proof}[Proof of finite dimensional convergence]
The proof follows the same idea as in \cite[Section 4.2]{BNQZ22}. More precisely, it suffices to show that:
\begin{align*}
	\mathrm{Var} \big( \big\langle D F_R(t_i), - D L^{-1} F_R(t_j) \big\rangle_{\mathcal{H}} \big) \leq C R, \quad \mbox{for any $i,j=1,\ldots,m$}.
\end{align*}
This inequality is proved using the same argument as for $\cA$ in Section \ref{proof_qclt}, based on an estimate for ${\rm Var}(\langle DF, -DL^{-1}G\rangle_{\cH})$ for two random variables $F$ and $G$, which can be deduced similarly to the estimate derived for $F=G$ in the proof of \cite[Proposition 2.4]{NXZ22}.
\end{proof}

\end{document}